\renewcommand{\labelenumi}{(\roman{enumi})}
\numberwithin{equation}{section}
\numberwithin{figure}{section}
\theoremstyle{plain}
\newtheorem{Thm}{Theorem}[section]
\newtheorem*{ThmS}{Theorem}
\newtheorem{Prop}[Thm]{Proposition}
\newtheorem{Cor}[Thm]{Corollary}
\newtheorem*{CorS}{Corollary}
\newtheorem{Lem}[Thm]{Lemma}
\newtheorem*{LemS}{Lemma}
\newtheorem{Clm}{Claim}
\newtheorem{Ques}{Question}
\theoremstyle{definition}
\newtheorem{Algo}{Algorithm}
\newtheorem{Def}[Thm]{Definition}
\newtheorem{Prob}{Problem}
\newtheorem{Nota}[Thm]{Notation}
\theoremstyle{remark}
\newtheorem*{Rmk}{Remark}
\renewcommand*\descriptionlabel[1]{\hspace\labelsep\emph{#1}}  
\newcounter{MyCounter}
\newcounter{MySubCounter}
\newcounter{MySubSubCounter}
\def\cgraphics#1{\raisebox{-.5\height}{\includegraphics{#1}}}  
\def\cinput#1{\raisebox{-.5\height}{\input{#1}}}  
\def\itemref#1{(\romannumeral 0\ref{#1}\relax )}  
\def\Itemref#1{(\uppercase\expandafter{\romannumeral 0\ref{#1}})}  
\def\eqspace{\hspace{.6cm}}
\def\cf{\mathbf{1}}
\def\EV{\mathbf{E}}
\def\Pr{\mathbf{P}}
\def\Var{\textrm{Var}}
\def\rm#1{\textrm{#1}}
\def\ul#1{\underline{#1}}
\def\BF#1{\mathbf{#1}}
\def\to{\rightarrow}
\def\downto{\downarrow}
\def\upto{\uparrow}
\def\Qp{\phi} 
\def\Vp{\psi} 
\def\a{\alpha}
\def\b{\beta}
\def\g{\gamma}
\def\cP{\mathcal{P}}
\def\cT{\mathcal{T}}
\def\cS{\mathcal{S}}
\def\cL{\mathcal{L}}
\def\x{\ul{x}}
\def\bx{\mathbf{x}}
\def\bt{\mathbf{t}}
\def\bv{\mathbf{v}}
\def\tM{\widetilde{M}}
\def\e{\epsilon}
\def\d{\delta}
\def\de{\partial}
\def\BV{\mathbf{V}}
\def\BD{\mathbf{D}}
\def\BU{\mathbf{U}}
\def\BV{\mathbf{V}}
\def\BX{\mathbf{X}}
\def\BY{\mathbf{Y}}
\def\Bs{\boldsymbol{\sigma}}
\def\supp{\textrm{supp}}
\def\stackscript#1#2#3{\parbox{#3}{\centering $\scriptstyle{#1}$\\ $\scriptstyle{#2}$}}
\def\Bbr{B^{\rm{br}}}
\def\Bex{B^{\rm{ex}}}
\def\Bgen{X}			
\def\mbr{\m^{\rm{br}}}
\def\mex{\m^{\rm{ex}}}
\def\Qbar{\overline{Q(S_n)}}	
\def\BC{\mathbb{C}}
\def\BN{\mathbb{N}}
\def\BQ{\mathbb{Q}}
\def\BR{\mathbb{R}}
\def\BZ{\mathbb{Z}}
\begin{document}


\title{The quantile transform of a simple walk}

\author[Assaf]{Sami Assaf}
\address{Department of Mathematics, University of Southern California,
  Los Angeles, CA 90089, USA}
\email{shassaf@usc.edu}

\author[Forman]{Noah Forman}
\address{Department of Mathematics, University of California Berkeley,
  Berkeley, CA 94720, USA}
\email{nforman@math.berkeley.edu}

\author[Pitman]{Jim Pitman}
\address{Department of Statistics, University of California Berkeley,
  Berkeley, CA 94720, USA}
\email{pitman@stat.berkeley.edu}


\date{\today}

\begin{abstract}
 We examine a new path transform on 1-dimensional simple random walks
 and Brownian motion, the \emph{quantile transform}. This
 transformation relates to identities in fluctuation theory due to
 Wendel, Port, Dassios and others, and to discrete and Brownian
 versions of Tanaka's formula. For an $n$-step random walk, the
 quantile transform reorders increments according to the value of the
 walk at the start of each increment. We describe the distribution of
 the quantile transform of a simple random walk of $n$ steps, using a
 bijection to characterize the number of pre-images of each possible
 transformed path. We deduce, both for simple random walks and for
 Brownian motion, that the quantile transform has the same
 distribution as Vervaat's transform. For Brownian motion, the
 quantile transforms of the embedded simple random walks converge to a
 time change of the local time profile. We characterize the
 distribution of the local time profile, giving rise to an identity
 that generalizes a variant of Jeulin's description of the local time
 profile of a Brownian bridge or excursion.
\end{abstract}

\maketitle

\setcounter{tocdepth}{1} 
\tableofcontents



\section{Introduction}

Given a simple walk with increments of $\pm1$, one observes
that the step immediately following the maximum value attained must be
a down step, and the step immediately following the minimum value
attained must be an up step. More generally, at a given value, the
subsequent step is more likely to be an up step the closer the value
is to the minimum and more likely to be a down step the closer the
value is to the maximum. To study this phenomenon more precisely, one
can form a two-line array with the steps of the walk and the value of
the walk, and then sort the array with respect to the values line and
consider the walk defined by the correspondingly re-ordered steps. It
is this transformation, which we term the \emph{quantile transform},
that we study here.

More precisely, for $w$ a walk, let $\Qp_w$ be the permutation of
$[1,n]$ such that, for $i < j$, either $w(\Qp_w(i)-1) <
w(\Qp_w(j)-1)$, or $w(\Qp_w(i)-1) = w(\Qp_w(j)-1)$ and $\Qp_w(i) <
\Qp_w(j)$. The \emph{quantile path transform} sends $w$ to the walk
$Q(w)$ where
\[  Q(w)(j) = \sum_{i=1}^j x_{\Qp_w(i)}. \]
In this paper, we characterize the image of the quantile transform on
simple (Bernoulli) random walks, which we call \emph{quantile walks},
and we find the multiplicity with which each quantile walk arises. These
results follow from a bijection between walks and \emph{quantile pairs}
$(v,k)$ consisting of a \emph{quantile walk} $v$ and a nonnegative integer $k$
satisfying certain conditions depending on $v$.

We also find, by passing to a Brownian limit, that the quantile transform of certain Bernoulli walks converge to an expression involving Brownian local times. This leads to a novel description of local times of Brownian motion up to a fixed time.

It is not difficult to describe the image of the set of walks under
the quantile transform; they are nonnegative walks and first-passage
bridges. Our main work is to prove the multiplicity with which each
image walk arises; this is stated in our Quantile bijection theorem, Theorem \ref{thm:q_bijection}, and illustrated in Figure \ref{fig:Q_helper}. We establish the bijection by decomposing the quantile transform into three maps:
\begin{align}\label{eq:q_decomp}
 (Q(w),\Qp_{w}^{-1}(n)) = \g\circ\b\circ\a(w).
\end{align}
In the middle stages of our sequence of maps we obtain combinatorial
objects which we call \emph{marked (increment) arrays} and
\emph{partitioned walks}.
\begin{align}\label{eq:q_decomp2}
 \rm{walk} \stackrel{\a}{\longmapsto} \rm{marked array}
 \stackrel{\b}{\longmapsto} \rm{partitioned walk}
 \stackrel{\g}{\longmapsto} \rm{walk-index pair}.
\end{align}
The three maps $\a$, $\b$, and $\g$ are discussed in sections
\ref{sec:incr_array}, \ref{sec:saw}, and \ref{sec:bijection_pf}
respectively.

In section \ref{sec:gen_walk} we prove an image-but-no-multiplicities
version of the Quantile bijection theorem for a more general class of
discrete-time processes.

In section \ref{sec:quantile_count} we show that the total number of
quantile pairs $(v,k)$ with $v$ having length $n$ is equal to the
number of walks of length $n$, i.e. $2^n$.

Section \ref{sec:incr_array} introduces \emph{increment arrays} and
defines the map $\a$. These arrays are a finite version of the
\emph{stack model} of random walk, which is the basis for
cycle-popping algorithms used to generate random spanning trees of
edge-weighted digraphs -- see Propp and Wilson\cite{PropWils98_1}.
Theorem \ref{thm:array_to_walk} asserts that $\a$ is injective and
characterizes its range; i.e.\ this theorem gives sufficient and
necessary conditions for a marked increment array to minimally
describe a walk.

In section \ref{sec:saw} we introduce \emph{partitioned walks} and the
map $\b$. This map is trivially a bijection, and Theorem
\ref{thm:saw_to_walk} describes the image of $\b\circ \a$. Equation
\eqref{eq:saw} is a discrete version of Tanaka's formula; this formula
has previously been studied in several papers, including
\cite{Kudzma82,CsorReve85_1,Szabados90,SzabSzek09}, and it plays a key
role both in this section and in the continuous setting.

In section \ref{sec:bijection_pf} we prove that $\g$ acts injectively
on the image of $\b\circ\a$, thereby completing our proof of Theorem
\ref{thm:q_bijection}.

Moving on from Theorem \ref{thm:q_bijection}, in section
\ref{sec:Vervaat} we demonstrate a surprising connection between the
quantile transform and a discrete version of the Vervaat transform,
which was discussed in definition \ref{def:Vervaat_cts}. Theorem
\ref{thm:Vervaat_wk} is the Vervaat analogue to Theorem
\ref{thm:q_bijection}. We find that quantile pairs and Vervaat pairs
coincide almost perfectly and that every walk has equally as many
preimages under the one transform as under the other.

In section \ref{sec:Jeulin}, we pass from simple random walks to a
Brownian limit in the manner of Knight\cite{Knight62,Knight63}. Our
path transformed walk converges strongly to a formula involving
Brownian local times. The bijection from the discrete setting results
in an identity, Theorem \ref{thm:gen_Jeulin}, describing local times of Brownian motion up to a fixed time, as a function of level. This identity generalizes a theorem of
Jeulin\cite{Jeulin85}.

Jeulin's theorem was applied by Biane and
Yor\cite{BianYor87} in their study of principal values around Brownian
local times. Aldous\cite{Aldous98}, too, made use of this identity to
study Brownian motion conditioned on its local time profile; and
Aldous, Miermont, and Pitman\cite{AldoMierPitm04}, while working in
the continuum random tree setting, discovered a version of Jeulin's
result for a more general class of L\'evy
processes. Leuridan\cite{Leuridan98} and Pitman\cite{Pitman99} have
given related descriptions of Brownian local times up to a fixed time,
as a function of level.

Related path transformations have been considered by Bertoin,
Chaumont, and Yor\cite{BertChauYor97} and later by
Chaumont\cite{Chaumont99} in connection with an identity of
fluctuation theory which had previously been studied by
Wendel\cite{Wendel60}, Port\cite{Port63}, and
Dassios\cite{Dassios95,Dassios96,Dassios05}. We conclude with a
discussion of these and other connections in section \ref{sec:background}.




\section{The quantile transform of a non-simple walk}
\label{sec:gen_walk}

It is relatively easy to describe the image of the quantile transform;
the difficulty lies in enumerating the preimages of a given image
walk. In this section we do the easy work, offering in Theorem
\ref{thm:q_non_simple} a weak version of Theorem \ref{thm:q_bijection}
in the more general setting of \emph{non-simple walks}. We conclude the section with a statement of our full Quantile bijection theorem, Theorem \ref{thm:q_bijection}.

Throughout this document we use the notation $[a,b]$ to denote an
interval of integers. While most results in the discrete setting apply
only to walks with increments of $\pm 1$, our results for this section apply to walks in general.

\begin{Def}
  For $n\geq 0$ a \emph{walk of length $n$} is a function $w: [0,n]\to
  \mathbb R$ with $w(0) = 0$.  We may view such a walk $w$ in terms of
  its increments, $x_i = w(i)-w(i-1)$, so that $w(j) = \sum_{i=1}^j
  x_i$.

  A walk of length $n$ is \emph{simple} if $w(i)-w(i-1) = \pm1$ for
  each $i\in [1,n]$. In particular, a simple walk is a function $w:
  [0,n]\to \mathbb Z$
\end{Def}

In subsequent sections of the document, for the sake of brevity we will say ``\emph{walk}'' to refer only to simple walks. 

\begin{Def}\label{def:q_perm}
  The \emph{quantile permutation} corresponding to a walk $w$ of
  length $n$, denoted $\Qp_w$, is the unique permutation of $[1,n]$
  with the property that
  $$(w(\Qp_w(1)-1),\Qp_w(1)-1);\ (w(\Qp_w(2)-1),\Qp_w(2)-1);\ \cdots;\ (w(\Qp_w(n)-1),\Qp_w(n)-1)$$
  is the increasing lexicographic reordering of the sequence
  $$(w(0),0);\ (w(1),1);\ \cdots;\ (w(n-1),n-1).$$

  The \emph{quantile path transform} sends $w$ to the walk $Q(w)$
  given by
  \begin{align}\label{eq:q_def}
    Q(w)(j) = \sum_{i=1}^j x_{\Qp_w(i)}\rm{\ for\ }j\in [1,n].
  \end{align}
\end{Def}

Note that the quantile permutation does not depend on the final
increment $x_n$ of $w$. A variant that does account for this final
increment was previously considered by Wendel\cite{Wendel60} and
Port\cite{Port63}, among others; this is discussed further in section
\ref{sec:background}.

We show an example of a simple walk and its quantile transform in Figure
\ref{fig:Q_transform}; for each $j$ the $j^{\rm{th}}$ increment of $w$
is labeled with $\Qp_w^{-1}(j)$. Observe that for a walk $w$ of length
$n$, we have $Q(w)(n) = w(n)$. As $j$ increases, the process
$Q(w)(j)$ incorporates increments which arise at higher values within
$w$. Consider example in Figure \ref{fig:Q_transform}. The first two
increments of $Q(w)$ correspond to the increments in $w$ which
originate at the value $-2$, the first five increments of $Q(w)$
correspond to those which originate at or below the value $-1$, and so
on.

\begin{figure}[htb]\centering
 \input{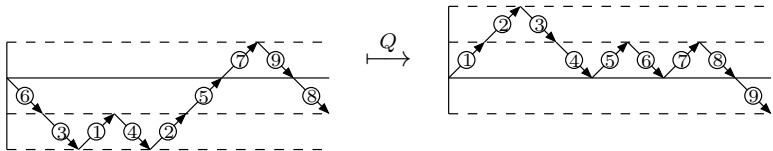}
 \caption{A walk and its quantile transform.}\label{fig:Q_transform}
\end{figure}

In discussing the proof and consequences of Theorem
\ref{thm:q_bijection} it is helpful to refer to several special
classes of walks.

\begin{Def}\label{def:special_walk}
  We have the following special classes of (simple) walks:
  \begin{itemize}
  \item A \emph{bridge to level $b$} is walk $w$ of length $n$ with
    $w(n) = b$; when $b = 0$, $w$ is simply a \emph{bridge}.
  \item A \emph{non-negative walk} is a walk of finite length which is non-negative at all times.
  \item A \emph{first-passage bridge} of length $n$ is a walk $w$
    which does not reach $w(n)$ prior to time $n$.
  \item A \emph{Dyck path} is a non-negative bridge (to level $0$).
  \end{itemize}
\end{Def}

As illustrated in Figure \ref{fig:q_alt_def}, $Q(w)(j)$ is the sum of
increments of $w$ which come from below a certain level. The graph of
$w$ is shown on the left and that of $Q(w)$ is on the right. The
increments which contribute to $Q(w)(6)$ are shown in both graphs as
numbered, solid arrows, and those that do not contribute are shown as
dashed arrows. The time $j=6$ is marked off with a vertical dotted
line on the left. Increments with their left endpoints strictly below
this value do contribute to $Q(w)(6)$, increments which originate at
exactly this value may or may not contribute, and increments which
originate strictly above this value do not contribute.

\begin{figure}[htb]\centering
 \input{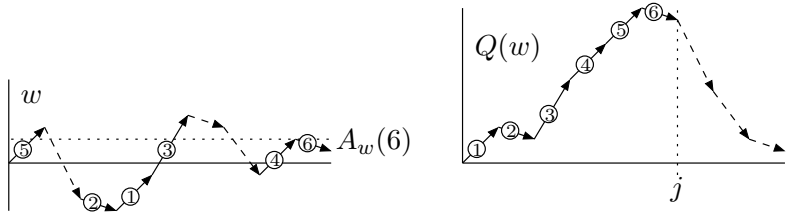}
 \caption{The value $Q(w)(6)$ is the sum of increments of $w$ which originate below $A_w(6)$, as well as some which originate exactly at $A_w(6)$.}\label{fig:q_alt_def}
\end{figure}

\begin{Def}
 Given a walk $w$, for $j\in [1,n]$ we define the \emph{quantile
   function of occupation measure}
 \begin{align*}
  A_w(j) &:= w(\Qp_w(j)-1).
 \end{align*}
\end{Def}

The quantile function of occupation measure may also be expressed
without reference to the quantile permutation by
\begin{align*}
 A_w(j) = \min\{a \in \mathbb{R}\ :\ \#\{i\in [0,n-1]\ :\ w(i) \leq
 a\}\geq j\}.
\end{align*}
On the left in Figure \ref{fig:q_alt_def}, the horizontal dotted line
indicates $A_w(6)$.

\begin{Thm}\label{thm:q_non_simple}
 For any walk $w$ of length $n$,
 \begin{align}
  \begin{array}{ll}
   Q(w)(j) \geq 0		& \rm{for}\ j \in [0,\ \Qp_w^{-1}(n))\rm{, and}\\
   Q(w)(j) > Q(w)(n)	& \rm{for}\ j \in [\Qp_w^{-1}(n),\ n).
  \end{array}
 \end{align}
 Consequently, $Q(w)$ is either a non-negative walk in the case where
 $w(n)\geq 0$ or a first-passage bridge to a negative value in the
 case where $w(n) < 0$.
\end{Thm}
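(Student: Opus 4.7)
The plan is to express each partial sum $Q(w)(j)$ as a sum of increments $x_k$ over a lex-initial subset of $[1,n]$, decompose that subset into maximal runs of consecutive integers, and control the sign of each run by telescoping plus a short lex-order comparison at the chain boundaries. Set $S_j := \{\Qp_w(1),\ldots,\Qp_w(j)\}$, $V_j := \{\Qp_w(i)-1 : i \in [1,j]\} \subseteq [0,n-1]$, and $(a,c) := (A_w(j), \Qp_w(j)-1)$. The definition of $\Qp_w$ identifies $V_j$ with the lex-initial set $\{i \in [0,n-1] : (w(i),i) \leq_{\mathrm{lex}} (a,c)\}$, and with $\tau := \Qp_w^{-1}(n)$ one has $n-1 \in V_j$ if and only if $j \geq \tau$, which is exactly the division of cases in the theorem.

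Next I would decompose $V_j = \bigsqcup_\ell [s_\ell, t_\ell]$ into maximal chains of consecutive integers; telescoping gives
\[
 Q(w)(j) \;=\; \sum_{k \in S_j} x_k \;=\; \sum_\ell \bigl(w(t_\ell+1) - w(s_\ell)\bigr).
\]
Maximality places $s_\ell - 1$ and $t_\ell+1$ outside $V_j$ (when they lie in $[0,n-1]$), which yields strict lex comparisons $(w(t_\ell+1), t_\ell+1) >_{\mathrm{lex}} (a,c)$ and, analogously, $(w(s_\ell-1), s_\ell-1) >_{\mathrm{lex}} (a,c)$ at the boundaries.

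For $j < \tau$, the condition $n-1 \notin V_j$ forces $t_\ell+1 \leq n-1$ for every chain, so the boundary constraint at $t_\ell+1$ applies. A two-case split on whether $w(s_\ell) < a$ or $w(s_\ell) = a$ (in which case $s_\ell \leq c$) shows $w(t_\ell+1) \geq w(s_\ell)$, so every chain contributes nonnegatively and $Q(w)(j) \geq 0$. For $j \in [\tau, n)$ I would use $Q(w)(n) = w(n)$ to rewrite $Q(w)(j) - w(n) = -\sum_{k \notin S_j} x_k$ and apply the symmetric argument to the lex-final complement $V_j^c := [0,n-1] \setminus V_j$, which is nonempty and excludes $n-1$. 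The mirror case split gives $w(t_\ell+1) \leq w(s_\ell)$ on each chain; the main obstacle is upgrading this to \emph{strict} inequality. The only equality case is $w(s_\ell) = w(t_\ell+1) = a$, which forces simultaneously $s_\ell > c$ and $t_\ell+1 \leq c$, contradicting $s_\ell \leq t_\ell$. Hence each chain contributes strictly negatively, $\sum_{k \notin S_j} x_k < 0$, and $Q(w)(j) > w(n)$.

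The ``consequently'' assertion then follows at once: when $w(n) \geq 0$, the two inequalities combined with $Q(w)(n) = w(n) \geq 0$ give $Q(w) \geq 0$ throughout, so $Q(w)$ is a nonnegative walk; when $w(n) < 0$, both inequalities give $Q(w)(j) > w(n)$ for every $j < n$ while $Q(w)(n) = w(n)$, so $Q(w)$ is a first-passage bridge to the negative value $w(n)$.
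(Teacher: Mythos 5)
Your proposal is correct and follows essentially the same route as the paper: decompose the contributing index set (or its complement) into maximal runs of consecutive integers, telescope each run to a difference of walk values, and use the lexicographic boundary comparisons against $(A_w(j),\Qp_w(j)-1)$ to get nonnegativity of each run for $j<\Qp_w^{-1}(n)$ and strict negativity of each complementary run for $j\geq\Qp_w^{-1}(n)$, including the same equality-case contradiction that handles runs starting exactly at level $A_w(j)$.
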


\begin{proof}
 First we prove that for $j < \Qp_w^{-1}(n)$ we have $Q(w)(j) \geq
 0$. Afterwards, we prove that for $j \in [\Qp_w^{-1}(n),\ n)$ we
   have $Q(w)(j) > Q(w)(n)$.

 Fix $j < \Qp_w^{-1}(n)$ and let
 \begin{align*}
  I &:= \{i \in [1,n]\ :\ \rm{either\ }w(i-1) <
  A_w(j)\rm{\ or\ }w(i-1)=A_w(j)\rm{\ and\ }i\leq \Qp_w(j)\}.
 \end{align*}
 Thus
 \begin{align*}
  Q(w)(j) = \sum_{i\in I} x_i.
 \end{align*}

 We partition $I$ into maximal intervals of consecutive integers. For
 example, in Figure \ref{fig:gen_walk_blocks} with $j=6$ we have $I =
 \{1,2,4,5,8,9\}$, which comprises three intervals:
 $\{1,2\}$,\ $\{4,5\}$, and $\{8,9\}$. We label these intervals
 $I_1,\ I_2,$ and so on.

 \begin{figure}[htb]\centering
  \input{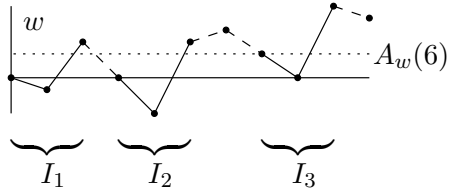}
  \caption{Three segments of the path of $w$ correspond to the three intervals in $I$.}\label{fig:gen_walk_blocks}
 \end{figure}

 These intervals correspond to segments of the path of $w$, shown in
 solid lines in the figure. Each such segment begins at or below
 $A_w(j)$ and each ends at or above $A_w(j)$. Here we rely on our
 assumption that $j < \Qp_w^{-1}(n)$ and thus $n\not\in I$: if one of
 our path segments included the final increment of $w$ then that
 segment might end below $A_w(j)$.

 Thus, for each $k$ we have
 $$\sum_{i\in I_k}x_i \geq 0,$$
 and so
 \begin{align*}
  Q(w)(j) &= \sum_{i\in I}x_i = \sum_k\sum_{i\in I_k}x_i \geq 0.
 \end{align*}

 Now fix $j \in [\Qp_w^{-1}(n),n)$, and we must show that $Q(w)(j) >
   Q(w)(n)$. Let $I^c$ denote $[1,n] - I$. Thus,
 \begin{align*}
  Q(w)(n) - Q(w)(j) = \sum_{i\in I^c}x_i.
 \end{align*}

 As with $I$ above, we partition $I^c$ into maximal intervals of
 consecutive numbers, $I^c_1,\ I^c_2,\ \cdots$. These intervals
 correspond to segments of the path of $w$. Each such segment begins
 at or above and ends at or below $A_w(j)$. As in the previous case,
 here we rely on our assumption that $j\geq \Qp_w^{-1}(n)$: if one of
 the $I^c_k$ included the final increment then the corresponding path
 segment might end above $A_w(j)$.

 Moreover if one of these segments begins exactly at $A_w(j)$ then it
 must end strictly below $A_w(j)$. In order for the segment
 corresponding to some block $[l,l+1,\cdots,m]$ of $I^c$ to begin
 exactly at $A_w(j)$ we would need: (1) $w(l-1) = A_w(j) =
 w(\Qp_w(j)-1)$ and (2) $l\in I^c$. Thus, by definition of $I$, we
 would have $l \geq \Qp_w^{-1}(j)$. And since $m+1 \in I$ and $m+1 >
 \Qp_w^{-1}(j)$, we would then have $w(m) < A_w(j)$, as claimed. We
 conclude that for each block $I^c_k$,
 $$\sum_{i\in I^c_k}x_i < 0.$$
 Consequently,
 \begin{align*}
  Q(w)(n) - Q(w)(j) = \sum_{i\in I^c}x_i = \sum_k\sum_{i\in I^c_k}x_i < 0,
 \end{align*}
 as desired.
\end{proof}

Theorem \ref{thm:q_non_simple} motivates the following definition.

\begin{Def}
  A \emph{quantile walk} is a simple walk that is either non-negative or a
  first-passage bridge to a negative value.

  A \emph{quantile pair} is a pair $(v,k)$ where $v$ is a quantile
  walk of length $n$ and $k$ is a nonnegative integer such that $v(j)
  \geq 0$ for $j\in [0,k)$ and $v(j) > v(n)$ for $j\in [k,n)$.
\end{Def}



The following is our main result in the discrete setting.

\begin{Thm}[Quantile bijection]\label{thm:q_bijection}
 The map $w \mapsto (Q(w), \Qp_w^{-1}(n))$ is a bijection between the set of simple walks of length $n$ and the set of quantile pairs $(v,k)$ with $v$ having length $n$.
\end{Thm}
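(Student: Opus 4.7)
The strategy is to realize the map $w \mapsto (Q(w), \Qp_w^{-1}(n))$ as the composition $\gamma \circ \beta \circ \alpha$ advertised in \eqref{eq:q_decomp}--\eqref{eq:q_decomp2} and to verify that each stage is bijective on the appropriate class of objects. Well-definedness of the map into quantile pairs is essentially immediate from Theorem~\ref{thm:q_non_simple}: that theorem says precisely that $Q(w)(j)\geq 0$ for $j\in[0,\Qp_w^{-1}(n))$ and $Q(w)(j)>Q(w)(n)$ for $j\in[\Qp_w^{-1}(n),n)$, which are the two defining conditions of a quantile pair with $k=\Qp_w^{-1}(n)$.

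For injectivity, I would chain the injectivity of the three intermediate maps. The map $\alpha$ from walks to marked increment arrays is injective with characterized image by Theorem~\ref{thm:array_to_walk}, which is proved in Section~\ref{sec:incr_array}. The map $\beta$ from marked increment arrays to partitioned walks is a trivial repackaging, hence bijective. The central technical step, deferred to Section~\ref{sec:bijection_pf}, is that $\gamma$ acts injectively on the image of $\beta\circ\alpha$. Chaining these three facts shows that the full map is injective on simple walks of length $n$.

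Once injectivity is in hand, surjectivity is purely a matter of counting: simple walks of length $n$ number $2^n$, and Section~\ref{sec:quantile_count} establishes that there are exactly $2^n$ quantile pairs $(v,k)$ with $v$ of length $n$. An injection between finite sets of the same cardinality is necessarily bijective, so the map $w\mapsto(Q(w),\Qp_w^{-1}(n))$ is the claimed bijection.

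The main obstacle is clearly the injectivity of $\gamma$ on the image of $\beta\circ\alpha$. A partitioned walk encodes not only the transformed walk $Q(w)$ but also a partition of its time axis inherited from the marked array structure, while a quantile pair retains only the single extra datum $k=\Qp_w^{-1}(n)$. The crux is to show that for those partitioned walks lying in the image of $\beta\circ\alpha$—those obeying the discrete Tanaka-type identity~\eqref{eq:saw}—the underlying partition is already determined by the pair $(v,k)$, so that $\gamma$ discards no information. Once this is established, invertibility of $\alpha$ on its image recovers the original walk $w$ from the reconstructed partitioned walk, completing the proof.
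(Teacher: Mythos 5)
Your outline reproduces the paper's own architecture almost exactly: well-definedness of $w\mapsto(Q(w),\Qp_w^{-1}(n))$ from Theorem~\ref{thm:q_non_simple}, the factorization $\gamma\circ\beta\circ\alpha$ of \eqref{eq:q_decomp}, the identification of walks with quantile partitioned walks via Theorems~\ref{thm:array_to_walk} and~\ref{thm:saw_to_walk}, and the conversion of injectivity into bijectivity by the enumeration of Proposition~\ref{prop:quantile_lvl_count} (equal cardinalities, in fact matched up-step/down-step counts). So the route is the same as the paper's.

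The gap is that the one piece of genuinely new content in this proof --- the injectivity of $\gamma$ on the quantile partitioned walks --- is only named, not proved. You write that this step is ``deferred to Section~\ref{sec:bijection_pf},'' but in the paper that injectivity is not a separately available result: it is established inside the proof of Theorem~\ref{thm:q_bijection} itself. Your sketch correctly says what must be shown (that the partition $(t_i)$ and the mark $\cP$ are already determined by the pair $(v,k)$), but you do not supply the argument. The paper's mechanism is Lemma~\ref{lem:saw_geom}: for a partitioned walk with the Saw and Bookends properties, each tooth $t_j$ with $j\le\cP$ (resp.\ $t_{j+1}$ with $j\ge\cP+1$) is the \emph{first} time $v$ hits an explicit affine barrier determined by the neighboring tooth and the trough value $M_{\bv}(j)$, and the troughs themselves depend only on $v(n)$ and the increment $v(k)-v(k-1)$ (equation \eqref{eq:trough_from_P}). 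Starting from the anchor $t_{\cP+1}=k$, one inducts outward in both directions to recover every tooth, and then $\cP$ and $\cL$ are pinned down by where the reconstructed teeth reach $0$ and $n$. Without this reconstruction (or an equivalent argument that $(v,k)$ determines the saw teeth), the claim that ``$\gamma$ discards no information'' is exactly the assertion being proved, so the proposal as written is an accurate plan rather than a complete proof.
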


This theorem is proved at the end of section \ref{sec:bijection_pf}. The next several sections build tools for that proof in the manner described in the introduction.

The index $\Qp_w^{-1}(n)$ serves as a \emph{helper variable} in the statement of the theorem, distinguishing between walks that have the same $Q$-image. This helper variable is the time at which the increment corresponding to the final increment of $w$ arises in $Q(w)$.

Figure \ref{fig:Q_helper} illustrates which indices $k$ may appear as
helper variables alongside a particular image walk $v$, depending on
the sign of $v(n)$. If $v(n) < 0$ then its helper $k$ may be any time
from 1 up to the hitting time of $-1$. If $v(n)\geq 0$ and $v$ ends in
a down-step then $k$ may be any time in the final excursion above the
value $v(n)$, including time $n$. In the special case where $v(n)\geq
0$ and $v$ ends with an up-step, $k$ can only equal $n$.

\begin{figure}[htb]\centering
 \input{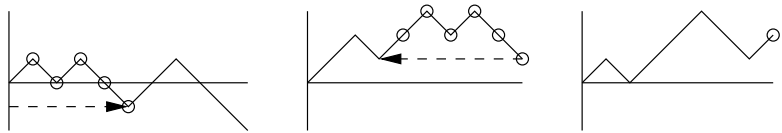}
 \caption{The allowed times for the helper variable (circled).\label{fig:Q_helper}}
\end{figure}

Throughout the remainder of the document we say ``\emph{walk}'' to refer to simple walks.




\section{Enumeration of quantile pairs}
\label{sec:quantile_count}

In this section we show that there are as many quantile pairs $(v,k)$ in which $v$ has $u$ up-steps and $d$-down steps as there are walks with $u$ up-steps and $d$ down-steps. We begin with notation.

Let $q(u,d)$ denote the number of quantile pairs $(v,k)$ for which $v$
has exactly $u$ up-steps and $d$ down-steps.  For $u\geq d$ let
$walk_+(u,d)$ denote the number of everywhere non-negative walks with
$u$ up-steps and $d$ down-steps.  For $u\not=d$ let $fpb(u,d)$ denote
the number of first-passage bridges with $u$ up-steps and $d$
down-steps.

The following two formulae are well known and can be found in
Feller\cite[p.\ 72-77]{Feller1}.
\begin{align}
 \rm{walk}_+(u,d) &= \binom{u+d}{u} - \binom{u+d}{u+1}\rm{ and}\label{eq:walk_+}\\
 \rm{fpb}(u,d) &= \binom{u+d-1}{u\wedge d} - \binom{u+d-1}{(u\wedge d)-1}.\label{eq:fpb}
\end{align}
A discussion of these and other formulae in this vein may also be found in \cite{EgecKing99}.

We call upon a version of the Cycle lemma.

\begin{Lem}[Cycle lemma, Dvoretzky and Motzkin, 1947\cite{DvorMotz47}]
 A uniformly random first-passage bridge to some level $-b$, with
 $b>0$, may be decomposed into $b$ consecutive, exchangeable random
 first-passage bridges to level $-1$. If we condition on the lengths
 of these first-passage bridges then they are independent and
 uniformly distributed in the sets of first-passage bridges to $-1$ of
 the appropriate lengths.
\end{Lem}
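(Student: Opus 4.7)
The plan is to derive both exchangeability and the conditional independence/uniformity claim from a single deterministic bijection between first-passage bridges to $-b$ and tuples of first-passage bridges to $-1$. Given any walk $W$ of length $n$ ending at $-b$, I would define the successive hitting times $T_k := \min\{j \geq 0 : W(j) = -k\}$ for $k = 0,\ldots,b$; when $W$ is a first-passage bridge to $-b$ these automatically satisfy $0 = T_0 < T_1 < \cdots < T_b = n$. Slicing $W$ at these times and shifting each slice vertically so it starts at $0$ yields a tuple $(W_1,\ldots,W_b)$ where $W_k$ has length $N_k := T_k - T_{k-1}$ and, by the minimality in the definition of $T_k$, is itself a first-passage bridge to $-1$.

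My first step would be to verify that this slice-and-shift operation is a bijection from the set of first-passage bridges to $-b$ of length $n$ onto the disjoint union, over compositions $n = n_1 + \cdots + n_b$ with each $n_k \geq 1$, of the product sets $\prod_{k=1}^b \mathcal{F}(n_k)$, where $\mathcal{F}(m)$ denotes the set of first-passage bridges to $-1$ of length $m$. The inverse map is simply concatenation, and the verification that the concatenation of $b$ first-passage bridges to $-1$ is again a first-passage bridge to $-b$ is immediate by induction on $b$.

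Given this bijection, the conditional statement of the lemma is automatic: the uniform measure on the left pushes forward to the uniform measure on the right, and restricting to the event $\{(N_1,\ldots,N_b) = (n_1,\ldots,n_b)\}$ yields the product of the uniform distributions on the factors $\mathcal{F}(n_k)$, which is exactly the claimed conditional independence with uniform marginals. For exchangeability, I would note that for any $\sigma \in S_b$ the permutation of tuples $(w_1,\ldots,w_b) \mapsto (w_{\sigma(1)},\ldots,w_{\sigma(b)})$ is a length-preserving bijection on the codomain, hence transports back through concatenation to a bijection from first-passage bridges to $-b$ of length $n$ to itself. Pushing the uniform law through this bijection shows that $(W_1,\ldots,W_b)$ and $(W_{\sigma(1)},\ldots,W_{\sigma(b)})$ have the same joint distribution.

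The only subtle point I would spell out carefully is that each slice really is a \emph{first-passage} bridge to $-1$, rather than merely a bridge ending at $-1$: on the interval $[T_{k-1},T_k]$ the walk $W$ stays at or above $-k$ (by minimality of $T_k$) and attains $-k$ only at the right endpoint, so after the shift it stays at or above $-1$ and hits $-1$ for the first time at its final step. I do not expect any hidden obstacle beyond this bookkeeping, as the remainder of the argument is a formal consequence of the product structure and the invariance of uniform measure under bijections.
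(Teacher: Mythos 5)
Your argument is correct and complete. Note, however, that the paper does not actually prove this lemma: it is stated as a known result, with a citation to Dvoretzky and Motzkin and pointers to the surveys of Dershowitz--Zaks and Pitman, and is then used as a black box in the proof of Proposition 3.2. So there is no in-paper proof to compare against; what you have supplied is a self-contained justification. Your decomposition at the successive hitting times $T_k$ of $-k$ is the intended one (this is what ``consecutive'' refers to), and the verification that each slice is genuinely a first-passage bridge to $-1$ -- which you rightly flag as the one point needing care -- goes through because the walk is simple, so it cannot reach $-k$ without first passing through $-(k-1)$, and minimality of $T_k$ keeps the slice strictly above its terminal value until the last step. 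Your route to exchangeability is worth remarking on: the classical Cycle Lemma proof proceeds by counting cyclic shifts of the increment sequence, whereas you observe that the slice-and-concatenate map is a bijection onto a codomain (tuples of first-passage bridges to $-1$ with lengths summing to $n$) that is invariant under permuting coordinates, so the pushforward of the uniform law is uniform on a symmetric set and hence exchangeable. This is more direct for the formulation stated here, and it delivers the conditional independence and uniformity given the lengths for free, since restricting a uniform measure to one component $\prod_k \mathcal{F}(n_k)$ of the disjoint union yields the product of uniforms. The only thing your argument does not recover is the cyclic-shift counting identity itself (that exactly $b$ of the $n$ rotations of a bridge to $-b$ are first-passage), but that identity is not part of the statement being proved, and the expectation computation $\EV(T) = (u+d)/(d-u)$ in Proposition 3.2 needs only the exchangeability of the lengths, which you have established.
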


Versions of this lemma have been rediscovered many times. For more
discussion on this topic see \cite{DersZaks90} and
\cite[p.\ 172-3]{Pitman98} and references therein.

Finally, we require the following formula.
\begin{Lem}
 For any non-negative integers $u$ and $d$,
 \begin{align}
  q(u,d+1) &= q(d,u+1) - \binom{u+d}{u+1} + \binom{u+d}{u-1}.\label{eq:q_count_flip}
 \end{align}
\end{Lem}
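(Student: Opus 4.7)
The plan is to prove the identity by directly enumerating $q(u,d+1)$ and $q(d,u+1)$ using the structural description of quantile pairs from Theorem~\ref{thm:q_non_simple} together with the paragraph following Theorem~\ref{thm:q_bijection}. First, both sides of \eqref{eq:q_count_flip} are antisymmetric under $u \leftrightarrow d$: the right-hand side satisfies $\binom{u+d}{d-1} - \binom{u+d}{d+1} = -\bigl(\binom{u+d}{u-1} - \binom{u+d}{u+1}\bigr)$ by binomial symmetry, and the left-hand side visibly flips sign when $u$ and $d$ are swapped. Both sides vanish when $u = d$, so we may assume $u > d$; then $h := u-d-1 \geq 0$ and $h' := d-u-1 \leq -2$. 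In this regime a quantile walk of signature $(u,d+1)$ is a non-negative walk ending at $h$, while one of signature $(d,u+1)$ is a first-passage bridge to $h'$.

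To compute $q(u,d+1)$, for each such non-negative walk $v$ the number of valid helpers is $n-\tau(v)$, where $\tau(v) := \max\{j \leq n-1 : v(j) \leq h\}$. One decomposes $v$ at time $\tau$: the prefix is a non-negative walk of length $\tau$ from $0$ ending at $h$ (or of length $n-1$ ending at $h-1$ in the sub-case that $v$ ends in an up-step), and the suffix is either a single up-step (contributing $1$ valid helper) or a positive excursion above level $h$ of length $2m$ (contributing $2m$ valid helpers). Using \eqref{eq:walk_+} and the Catalan count $C_{m-1}$ of positive excursions of length $2m$, this yields $q(u,d+1)$ as an explicit binomial--Catalan sum. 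Likewise for $q(d,u+1)$: the Cycle Lemma decomposes each first-passage bridge to $h'$ uniquely into a concatenation of $|h'|=u-d+1$ first-passage bridges to $-1$, and the helper count equals the length of the first such segment; summing with \eqref{eq:fpb} yields a companion binomial--Catalan sum.

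Both sums evaluate in closed form via the Cycle-Lemma (equivalently, Lagrange-inversion) identity $[z^k]\,C(z)^B = \tfrac{B}{2k+B}\binom{2k+B}{k}$, where $C(z) = \sum_m C_m z^m$ is the Catalan generating function. The final step is to verify that the difference of the two closed forms equals $\binom{u+d}{u-1} - \binom{u+d}{u+1}$, which should reduce to repeated application of Pascal's rule. The main obstacle is the length-weighted bookkeeping: one must carefully track the excursion-length factor $2m$ arising in $q(u,d+1)$ and the initial-segment-length factor arising in $q(d,u+1)$, so that each sum collapses to an ordinary binomial expression after the Cycle-Lemma evaluation. Once there, the Pascal-rule telescoping to the desired right-hand side is routine.
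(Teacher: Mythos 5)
Your plan is correct, but it is a genuinely different argument from the paper's. The paper never evaluates either count: for $u>d$ it defines a path transformation $T$ on non-negative walks ending in a down-step (delete the final down-step, reverse and negate the remaining increments, append a down-step), which bijects such walks having $u$ up-steps and $d+1$ down-steps onto first-passage bridges having $d$ up-steps and $u+1$ down-steps, and observes that $v$ admits exactly one more helper than $T(v)$; summing over $v$ gives $q(u,d+1) - \mathrm{walk}_+(u-1,d+1) = q(d,u+1) + \mathrm{fpb}(d,u+1)$, and the binomial correction terms are then read off from \eqref{eq:walk_+} and \eqref{eq:fpb}. You instead enumerate both sides outright: your helper-count formulas (final excursion above $v(n)$ for non-negative walks, hitting time of $-1$ for first-passage bridges) match the paper's description following Theorem \ref{thm:q_bijection}, and your last-exit and Cycle-Lemma decompositions are sound. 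I checked that the weighted Catalan convolutions do collapse as you hope: using $mC_{m-1}=\binom{2m-2}{m-1}$ and $[z^k]\,C(z)^B(1-4z)^{-1/2}=\binom{2k+B}{k}$ (note your quoted identity for $C(z)^B$ alone does not absorb the length weights, so this extra ingredient or an exchangeability/expectation argument is needed) one obtains $q(u,d+1)=\binom{u+d+1}{u}$ and $q(d,u+1)=\binom{u+d+1}{d}$, whose difference is $\binom{u+d}{u-1}-\binom{u+d}{u+1}$ by Pascal's rule. So your route costs more computation, and the steps you label routine still have to be written out, but it buys more: it yields the closed form of Proposition \ref{prop:quantile_lvl_count} directly, making both this lemma and the expectation argument there redundant, whereas the paper's transformation argument is shorter, purely bijective, and defers all closed-form counting to that proposition.
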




\begin{proof}
 The formula is trivial in the case $u = d$. Moreover, it suffices to
 prove the formula in the case $u > d$, since the case $u < d$ follows
 by swapping variables.

 We define a bijective path transformation $T$ which transforms a
 non-negative walk ending in a down-step to a first-passage bridge
 down. This transformation offers a near duality between two classes of quantile pairs.

 Let $v$ be a non-negative walk that ends in a down step. We define a
 bijective path transformation $T$ which transforms such a walk into a
 first passage bridge down. In particular, $T$ transforms $v$ by the
 following three steps: (1) it removes the final down-step of $v$; (2)
 it reverses the sign and order of the remaining increments in $v$;
 and (3) it adds a final down-step to the resulting walk. This is
 illustrated in Figure \ref{fig:q_count_xform}.

 \begin{figure}\centering
  \input{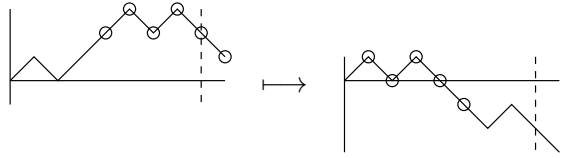}
  \caption{A path transform which almost preserves number of allowed helper values.}\label{fig:q_count_xform}
 \end{figure}

 Fix $u > d$. The transformation $T$ bijectively maps: (1)
 non-negative walks that end in down-steps and take $u$ up-steps and
 $d+1$ down-steps to (2) first-passage bridges that take $d$ up-steps
 and $u+1$ down-steps. This map has the additional property that $v$
 belongs to exactly one more quantile pair than $T(v)$ does:
 \begin{align}
  \#\{k : (v,k)\rm{ is quantile}\} = \#\{k  : (T(v),k)\rm{ is quantile}\} + 1.\label{eq:path_xform_q_count}
 \end{align}
 This gives the following identity for $u > d$:
 \begin{align}
  q(u,d+1) - \rm{walk}_+(u-1,d+1) = q(d,u+1) + \rm{fpb}(d,u+1).
 \end{align}
 The second term on the right corresponds to the ``$+ 1$'' from
 equation \eqref{eq:path_xform_q_count}. The second term on the left
 accounts for quantile pairs involving non-negative walks that end in
 up-steps. Subbing in the known counts \eqref{eq:walk_+} and
 \eqref{eq:fpb} gives the desired result.
\end{proof}

We now have all of the elements needed to prove our enumeration of quantile pairs.

\begin{Prop}\label{prop:quantile_lvl_count}
 For any non-negative integers $u$ and $d$,
 \begin{align}\label{eq:quantile_lvl_count}
  q(u,d) = \binom{u+d}{u}.
 \end{align}
\end{Prop}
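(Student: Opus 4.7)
My plan is to split on the sign of $u - d$. For $u < d$, I would compute $q(u,d)$ directly from the Cycle lemma. For $u \geq d$, I would reduce to the first case via the already-proved identity \eqref{eq:q_count_flip}.

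For $u < d$: since $v(n) = u - d < 0$, Theorem \ref{thm:q_non_simple} forces any admissible quantile walk $v$ to be a first-passage bridge to $u - d$. The first-passage property makes the condition $v(j) > v(n)$ automatic for $j \in [0,n)$, so the quantile-pair constraints reduce to requiring $v(j) \geq 0$ for $j \in [0,k)$, which is exactly $1 \leq k \leq T_{-1}(v)$, where $T_{-1}(v)$ denotes the first hitting time of $-1$. Hence $q(u,d) = \sum_v T_{-1}(v)$, summed over first-passage bridges with $u$ up-steps and $d$ down-steps. The Cycle lemma decomposes each such $v$ into $b := d - u$ consecutive exchangeable first-passage bridges to $-1$, of lengths $\ell_1(v),\dots,\ell_b(v)$, with $T_{-1}(v) = \ell_1(v)$. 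Exchangeability will give
\[
b\sum_v \ell_1(v) \;=\; \sum_v (\ell_1 + \cdots + \ell_b)(v) \;=\; (u+d)\cdot\rm{fpb}(u,d),
\]
and substituting the formula \eqref{eq:fpb} for $\rm{fpb}(u,d)$ simplifies the result to $\binom{u+d}{u}$.

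For $u \geq d$: the case $d = 0$ is immediate since the only walk is monotone up, admitting $k = n$ as its unique helper, giving $q(u,0) = 1 = \binom{u}{u}$. For $d \geq 1$, I would apply \eqref{eq:q_count_flip} with $d$ replaced by $d-1$ to obtain
\[
q(u,d) \;=\; q(d-1,\,u+1) \;-\; \binom{u+d-1}{u+1} \;+\; \binom{u+d-1}{u-1}.
\]
Since $u \geq d$ forces $d - 1 < u + 1$, the previous case applies and yields $q(d-1, u+1) = \binom{u+d}{u+1}$. Two applications of Pascal's identity then collapse the right-hand side to $\binom{u+d-1}{u} + \binom{u+d-1}{u-1} = \binom{u+d}{u}$.

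The hard part will be the $u < d$ step: correctly identifying the set of admissible helpers as $[1, T_{-1}(v)]$ from the quantile-pair definition, and then exploiting exchangeability via the Cycle lemma to turn the sum $\sum_v T_{-1}(v)$ into $(u+d)/(d-u)$ times $\rm{fpb}(u,d)$. Once that is in hand, the $u \geq d$ case is a purely mechanical binomial manipulation.
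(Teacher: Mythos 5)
Your proposal is correct and follows essentially the same route as the paper: the case $u<d$ via the Cycle lemma (your sum $\sum_v T_{-1}(v)$ is just the paper's $\EV(T)\cdot\rm{fpb}(u,d)$ written out), and the case $u\geq d$ via \eqref{eq:q_count_flip} with $d$ shifted to $d-1$ followed by Pascal's identity. Your separate treatment of $d=0$ is a small tidiness bonus the paper glosses over, but otherwise the arguments coincide.
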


\begin{proof}
 We  prove the result in the case $u<d$ and then use equation
 \eqref{eq:q_count_flip} to pass our result to the case where $u\geq
 d$.

 Suppose $u<d$. Let $(W_j,\ j\in[0,n])$ denote a uniform random first
 passage bridge conditioned to have $u$ up-steps and $d$ down-steps,
 where $u$ and $d$ are fixed. Let $T$ denote the first-arrival time of
 $W$ at $-1$ -- this is the random number of quantile pairs to which
 $W$ belongs. By the Cycle Lemma, $W$ may be decomposed into $d-u$
 exchangeable first passage bridges to $-1$. Thus,
 $$\EV(T) = \frac{u+d}{d-u}.$$
 So
 \begin{align*}
  q(u,d) &= \EV(T)\rm{fpb}(u,d)\\
	&= \frac{u+d}{d-u}\left(\binom{u+d-1}{d-1} - \binom{u+d-1}{u-1}\right)\\
	&= \frac{u+d}{d-u}\left(\binom{u+d}{d}\frac{d}{u+d} - \binom{u+d}{u}\frac{u}{u+d}\right) = \binom{u+d}{d},
 \end{align*}
 as desired.

 Now suppose $u \geq d$. By equation \eqref{eq:q_count_flip} and the
 previous case
 \begin{align*}
  q(u,d) &= \binom{u+d}{u+1} - \binom{u+d-1}{u+1} + \binom{u+d-1}{u-1}\\
	&= \binom{u+d-1}{u} + \binom{u+d-1}{u-1} = \binom{u+d}{u}.
 \end{align*}
\end{proof}




\section{Increment arrays}
\label{sec:incr_array}

The increment array corresponding to a walk is a collection of
sequences of $\pm 1$s, with each sequence listing the increments from
a particular \emph{level} of that walk. This is a finite version of
the stack model of a Markov process, discussed in Propp and
Wilson\cite[p.\ 205]{PropWils98_1} in connection with the cycle
popping algorithm for generating a random spanning tree of an
edge-weighted digraph. Whereas the stack model assumes an infinite
excess of instructions, we study increment arrays which minimally
describe walks of finite length. Theorem \ref{thm:array_to_walk}
characterizes these increment arrays.

In terms of the decomposition of $Q$ proposed in
equations \eqref{eq:q_decomp} and \eqref{eq:q_decomp2}, this section
defines and studies the map $\a$.

By virtue of their finiteness, increment arrays may be viewed as
discrete local time profiles with some additional
information. Discrete local times have been studied extensively; see,
for example, Knight\cite{Knight63} and R\'ev\'esz\cite{Revesz}. A more
complete list of references regarding asymptotics of discrete local
times is given in section \ref{sec:Jeulin}.

The quantile transform rearranges increments on the basis of their
left endpoints.

\begin{Def}
 Let $w$ be a walk of length $n$. For $1\leq j\leq n$ we define the
 \emph{level} of (the left end of) the $j^{\rm{th}}$ increment of $w$
 to be
 \begin{align*}
  w(j-1) - \min_{0\leq i<n} w(i).
 \end{align*}
 The $j^{\rm{th}}$ increment of $w$ is said to \emph{belong to}, or to
 \emph{leave}, that level. We name four important levels of a walk
 $w$, illustrated in Figure \ref{fig:level_labels}.
 \begin{itemize}
  \item The \emph{start level} is the level of the first increment, or
    $-\min_{i<n} w(i)$. We  typically denote this $\cS$, or
    $\cS_w$ in case of ambiguity.
  \item The \emph{terminal level} is $(w(n) - \min_{i<n} w(i))$. We
     typically denote this $\cT$ or $\cT_w$.
  \item The \emph{preterminal level} is the level of the final
    increment, or $(w(n-1) - \min_{i<n} w(i))$. We  typically
    denote this $\cP$ or $\cP_w$.
  \item The \emph{maximum level} is $\max_{j\in [0,n-1]} w(j)$. We
     typically denote this $\cL$ or $\cL_w$.
 \end{itemize}
\end{Def}

\begin{figure}[htb]\centering
 \input{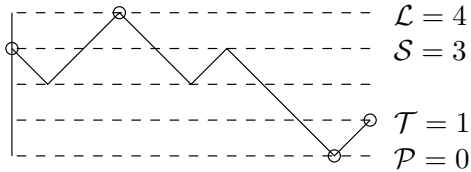}
 \caption{A walk with its distinguished levels labeled.}\label{fig:level_labels}
\end{figure}

Note that if $w$ is a first-passage bridge then no increments leave
its terminal level. In this case $\cT$ equals either $-1$ or $\cL +
1$. Because $\cT$ attains these exceptional values, the set of
first-passage bridges arise as a special case throughout this
document.

The start, preterminal, and terminal levels share the following relationship.
\begin{align}\label{eq:special_levels}
 \cS = \cT - w(n) = \cP - w(n-1).
\end{align}

The quantile transform of a walk $w$ is determined by the levels at
which the increments of $w$ occur and the orders in which they occur
at each level. We define \emph{increment arrays} to carry this
information.

\begin{Def}
 An \emph{increment array} is an indexed collection $\bx =
 (\x_i)_{i=0}^{\cL}$ of non-empty, finite sequences of $\pm 1$s. We
 call the $\x_i$s the \emph{rows} and $\cL$ the \emph{height} of the
 array. We say that an increment array $(\x_i)_{i=0}^{\cL}$
 \emph{corresponds to a walk $w$} with maximum level $\cL$ if, for
 every $i\in [0,\cL]$, the sequence of increments of $w$ at level $i$
 equals $\x_i$; i.e.
 $$\x^w_i = (w(s_1+1)-w(s_1)),\;\cdots,\; w(s_k+1) - w(s_k)),$$ where
 $s_1<\cdots<s_k$ is the sequence of times prior to $n$ at which $w$
 visits level $i$.
\end{Def}

An example of a walk and its corresponding increment array is given in
Figure \ref{fig:incr_array}. In that figure we've bolded the
increments from level 4.

\begin{figure}[htb]\centering
 \input{profile_array.pstex_t}
 \caption{A walk with the corresponding increment array and up- and down-crossing counts.\label{fig:incr_array}}
\end{figure}

\begin{Def}\label{def:crossing_counts}
 Given an increment array $\bx$, we define $u^{\bx}_i$ and $d^{\bx}_i$
 to be the number of `$1$'s and `$-1$'s, respectively, that appear in
 $\x_i$. Correspondingly, for a walk $w$ we define $u^w_i$ and $d^w_i$
 to be the numbers of up- and down-steps of $w$ from level $i$. We
 call the $u^\bx_i$s and $d^\bx_i$s (respectively $u^w_i$s and
 $d^w_i$s) the \emph{up-} and \emph{down-crossing counts} of $\bx$
 (resp.\ of $w$). We define the \emph{sum of $\bx$}, denoted
 $\sigma_{\bx}$, to be the sum of all increments in the array:
 \begin{align}
  \sigma_{\bx} := \sum_{i=0}^{\cL}\sum_{j\in \x_i} j = \sum_{i=0}^{\cL} u_i - d_i.
 \end{align}
\end{Def}

Clearly, if $\bx$ corresponds to a walk $w$ of length $n$ then $\sigma_{\bx} = w(n)$, and for each $i$
\begin{align*}
 u^{\bx}_i = u^w_i\rm{\;\; and\;\;} d^{\bx}_i = d^w_i.
\end{align*}

We now define the map $\a$, which was referred to in equations
\eqref{eq:q_decomp} and \eqref{eq:q_decomp2}. We need this map to be
injective, but we see in Theorem
\ref{thm:array_to_walk_multiplicity} that the map from a walk to its
corresponding increment array is not injective, so $\a(w)$ must pass
some additional information.

\begin{Def}\label{def:alpha}
 Given an increment array $\bx = (\x_i)_{i=0}^{\cL}$, we may
 arbitrarily specify one row $\x_{\cP}$ with $\cP\in [0,\cL]$ to be
 the \emph{preterminal row}. We call the pair $(\bx,\cP)$ a
 \emph{marked (increment) array}, since one row has been ``marked'' as
 the preterminal row. We say that the marked array \emph{corresponds
 to a walk $w$} if $w$ corresponds to $\bx$ and has preterminal
 level $\cP$.

 We define $\a$ to be the map which sends a walk $w$ to its
 corresponding marked array.
\end{Def}

Equation \eqref{eq:special_levels} may be restated in this setting. If
an array $\bx$ corresponds to a walk $w$ with preterminal level $\cP$
then the start and terminal levels of $w$ are specified by
\begin{align}\label{eq:special_rows}
 \cT = \cP - x_{\cP}^*\rm{,\;\; and\;\; }\cS = \cT - \sigma_{\bx},
\end{align}
where $x_{\cP}^*$ denotes the final increment in the row $\x_{\cP}$.

\begin{Def}
 For a marked array $(\bx,\cP)$ we define the indices $\cS$ and $\cT$
 via equation \eqref{eq:special_rows}. If $\cS$ falls within $[0,\cL]$
 then we call $\x_{\cS}$ the \emph{start row} of $\bx$; otherwise we
 say that the start row is empty. Likewise, if $\cT\in [0,\cL]$ then
 we call $\x_{\cT}$ the \emph{terminal row}, and if not then we say
 that the terminal row is empty.
\end{Def}

In Figure \ref{fig:reconst_alg} we state an algorithm to reconstitute
the walk corresponding to a valid marked array. This is the same
algorithm implied by the stack model of random walks, discussed in
\cite{PropWils98_1}. In light of this algorithm, a marked increment
array may be viewed as a set of instructions for building a walk: the
row $\x_i$ tells the walk which way to go on successive visits to
level $i$. Figure \ref{fig:reconst_eg} presents an example run of this
algorithm.

\begin{figure}[ptbh]\centering
 \begin{Verbatim}[numbers=left,numbersep=4pt,commandchars=\\\{\}]
Reconstitution(x[],P)
  ## Takes two arguments - incr array x[] and preterm lvl P
  ## Each x[i] is a queue w/ operation Pop(x[i]) which pops
  ##  x[i][0] off of x[i] and returns the popped value.

  L := length(x) - 1  ## set max level
  S := P + x[P+1][length(x[P+1])-1]  ## define S via \eqref{eq:special_rows}

  w[0] := 0, m := 0, i:= S

  While x[i] not empty:\label{line:reconst_loop}
    x := Pop(x[i])
    w[m+1] := w[m] + x
    i := i+x, m := m+1

  Return w
\end{Verbatim}
 \caption{A pseudocode algorithm to reconstitute a walk from a marked array.}\label{fig:reconst_alg}
\end{figure}

\begin{figure}[ptbh]
 \begin{tabular*}{\textwidth}[h]{rll}
	$\cP = 3$;	& $\x_0 = (1),\; \x_1 = (-1,1),\; \x_2 = (1),\; \x_3 = (-1)$\\[6pt]
  (1) $i = 1$;	& $\x_0 = (1),\; \x_1 = (\BF{-1},1),\; \x_2 = (1),\; \x_3 = (-1)$& \cgraphics{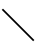}\\[6pt]
  (2) $i = 0$;	& $\x_0 = (\BF{1}),\; \x_1 = (1),\; \x_2 = (1),\; \x_3 = (-1)$	& \cgraphics{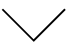}\\[6pt]
  (3) $i = 1$;	& $\x_1 = (\BF{1}),\; \x_2 = (1),\; \x_3 = (-1)$			& \cgraphics{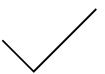}\\[6pt]
  (4) $i = 2$;	& $\x_2 = (\BF{1}),\; \x_3 = (-1)$						& \cgraphics{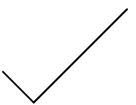}\\[6pt]
  (5) $i = 3$;	& $\x_3 = (\BF{-1})$										& \cgraphics{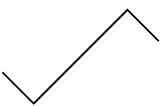}\\[18pt]
 \end{tabular*}
 \caption{Reconstitution algorithm (Fig.\ \ref{fig:reconst_alg}) run on a valid marked array (see Def.\ \ref{def:valid_array}). Input $(\bx,\cP)$ shown at top. Each row below corresponds to an iteration of the loop.
 }\label{fig:reconst_eg}
\end{figure}

We wish to characterize which marked arrays correspond to walks. This
is the main result of section \ref{sec:incr_array}.

\begin{Def}\label{def:valid_array}
  An increment array has the \emph{Bookends property} if for every $i
  \leq \min\{\cP,\cT\}$ the final entry in $\x_i$ is a $1$, and for
  each $i \geq \max\{\cP,\cT\}$ the final entry is a $-1$.

 A marked array has the \emph{The Crossings property} if for each
 $i\in [0,\cL+1]$
 \begin{align}
   u_{i-1} - d_i = \cf\{i\leq \cT\} - \cf\{i\leq \cS\},\label{eq:up_down_counts_array}
 \end{align}
 where we define $u_{-1} = d_{\cL+1} = 0$.

 A marked array with the Bookends and Crossings properties is called
 \emph{valid}. We call an increment array $\bx$ \emph{valid} if
 $(\bx,\cP)$ is valid for some $\cP$.
\end{Def}

\begin{Thm}\label{thm:array_to_walk}
 The map $\a$ is a bijection between the set of walks and the set of
 valid marked arrays.
\end{Thm}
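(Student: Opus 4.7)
The plan is to prove $\a$ is a bijection in three parts: (I) $\a(w)$ is a valid marked array for every walk $w$; (II) $\a$ is injective, with the Reconstitution algorithm of Figure \ref{fig:reconst_alg} recovering $w$ from $\a(w)$; (III) every valid marked array lies in the image of $\a$, again verified by Reconstitution.

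For (I), the Crossings identity \eqref{eq:up_down_counts_array} is an accounting lemma: $u_{i-1} - d_i$ equals the net number of times $w$ crosses the gap between levels $i-1$ and $i$ going upward, which in turn equals $\cf\{i \leq \cT\} - \cf\{i \leq \cS\}$ by comparing initial and terminal positions. For Bookends, I would observe that the final departure from any level $i \leq \min\{\cP,\cT\}$ must be upward, since otherwise the walk would be trapped strictly below $i$ and unable to reach either $\cP$ at time $n-1$ or $\cT$ at time $n$; the case $i \geq \max\{\cP,\cT\}$ is symmetric. Claim (II) is essentially tautological: at step $m$ Reconstitution pops the next unused entry from the row indexed by the current level, which is exactly the increment $w$ produced on its $m$-th visit to that level, so the output matches $w$ step by step.

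The main obstacle is (III). I would proceed by maintaining a loop invariant. Write $\bx^{(m)}$ for the array of unpopped entries after $m$ iterations and $i^{(m)}$ for the current level. Then $\bx^{(m)}$ satisfies Crossings with $\cS$ replaced by $i^{(m)}$ (a one-step check: each pop decrements either $u$ or $d$ in precisely the right place to rebalance \eqref{eq:up_down_counts_array}), and Bookends persists for every not-yet-emptied row, automatically, since popping from the front never disturbs a final entry. The algorithm halts when $\x_{i^{(m)}}^{(m)} = \emptyset$, and I must show this happens exactly when every row is empty. First, setting $u_{i^{(m)}}^{(m)} = d_{i^{(m)}}^{(m)} = 0$ in the invariant-Crossings identity at $i^{(m)}$ and $i^{(m)}+1$ and using non-negativity of crossing counts forces $i^{(m)} = \cT$. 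Working outward from $\cT$ one level at a time, Crossings then yields that one of $u_{j-1}^{(m)}, d_{j+1}^{(m)}$ must vanish at each neighboring level, while Bookends fixes the sign of the (still-intact) last entry of any potentially non-empty row---a sign incompatible with the vanishing count unless that row is already empty. An ancillary check at $i=0$ and $i=\cL+1$ confirms that $d_0, u_\cL \leq 1$, with equality only when $\cT \in \{-1,\cL+1\}$, so the algorithm never accesses $\x_{-1}$ or $\x_{\cL+1}$ prematurely.

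To finish (III), I would check $\a(w^*) = (\bx,\cP)$ for the output walk $w^*$ by assembling four observations: (a) the sequence of increments at each level of $w^*$ equals the corresponding row of $\bx$ by construction, so $\bx^{w^*} = \bx$; (b) the invariant above gives $\cT_{w^*} = \cT$; (c) connectivity (the walker visits every level in $[0,\cL]$) yields $\cS_{w^*} = \cS$, where $\cS = \cP - x_\cP^*$ is the algorithm's starting level from \eqref{eq:special_rows}; and (d) combining the walk-side identity $\cS_{w^*} = \cP_{w^*} - x_n^{w^*}$ with $\cT = \cP_{w^*} + x_n^{w^*}$ (so $x_n^{w^*} = \cT - \cP_{w^*}$, the original last entry of $\x_{\cP_{w^*}}$) yields the algebraic identity $2\cP_{w^*} - \cT = 2\cP - \cT$, and hence $\cP_{w^*} = \cP$.
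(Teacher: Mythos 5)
Your strategy for the hard direction---running the Reconstitution algorithm on a valid marked array, maintaining the invariant that the unpopped array satisfies Crossings with $\cS$ replaced by the current level, and then at the halting time forcing the current level to be $\cT$ and propagating emptiness outward from $\cT$ via Crossings plus Bookends---is sound, and it is genuinely different from the paper's proof. The paper instead proves existence by structural induction within similarity classes of marked arrays: Lemma \ref{lem:array_base_case} exhibits a walk for the ``sorted'' representative of each class, and the inductive step realizes a transposition of two consecutive non-final entries of a row as a swap of two consecutive excursions of the walk away from that level. Your route is closer to the last-exit-tree argument the authors allude to for directed multigraphs; your parts (I) and (II), the one-step invariant check, and the halting analysis are all correct (you should also note explicitly that validity forces $\cS\in[0,\cL]$ so the algorithm can begin, which follows from Crossings and Bookends at $i=0$ and $i=\cL+1$ in the same spirit as your ancillary check).

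The genuine gap is step (d). The ``walk-side identity'' $\cS_{w^*}=\cP_{w^*}-x^{w^*}_n$ is false: by \eqref{eq:special_levels} one has $\cS=\cP-w(n-1)$, not $\cP-x_n$, so the asserted consequence $2\cP_{w^*}-\cT=2\cP-\cT$ does not follow. Moreover, no arithmetic in $\cS$, $\cT$, $\sigma_{\bx}$ alone can possibly recover the marking: when $\sigma_{\bx}=0$ the same unmarked array $\bx$ is the increment array of two distinct bridges with different preterminal levels (Theorem \ref{thm:array_to_walk_multiplicity}), so you must invoke the Bookends data to see that your output walk $w^*$ ends with the pop from row $\cP$ rather than from the row on the other side of $\cT$. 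The repair is short and stays inside your framework: by the necessity direction already established, $(\bx,\cP_{w^*})$ is a valid marking, and its triple $(\cP_{w^*},\cT_{w^*},\cS_{w^*})$ satisfies \eqref{eq:special_rows}, Bookends, and $\cP_{w^*}\in[0,\cL]$; your halting analysis and step (c) give $\cT_{w^*}=\cT$ and $\cS_{w^*}=\cS$. By Proposition \ref{prop:Bookends_P} there are at most two such triples for a given $\bx$, and if there are two they agree in no entry; since your triple shares $\cT$ and $\cS$ with $(\cP,\cT,\cS)$, the triples coincide and $\cP_{w^*}=\cP$. (Concretely, when $\cT\in[0,\cL]$ the sign of the final entry of $\x_{\cT}$ decides on which side of $\cT$ the preterminal level lies, and it gives the same answer for $\cP$ and for $\cP_{w^*}$; when $\cT\in\{-1,\cL+1\}$ both are forced by the range constraint.) With that substitution your argument is a complete and valid alternative proof.
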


The necessity of the Bookends property is clear. For each $i\not= \cP$
the last increment from level $i$ of a walk $w$ must go towards the
preterminal level. Likewise, for each $i\not= \cT$ the last increment
from level $i$ must go towards the terminal level. Note that because
there can be no index $i$ strictly between $\cP$ and $\cT$, these two
requirements are never in conflict.

Next we consider decomposing a walk around its visits to a level. We
use this idea first to prove the necessity of the Crossings property,
and then to prove the sufficiency of the conditions in Theorem
\ref{thm:array_to_walk}. This approach is motivated by excursion
theory and by the approach in Diaconis and Freedman\cite{DiacFree80},
which deals with related issues. In particular, whereas our Theorem
\ref{thm:array_to_walk} gives conditions for the existence of a path
corresponding to a given set of instructions (a marked array), Theorem
(7) in \cite{DiacFree80} gives conditions, based on comparing
instructions, for two paths to arise with equal probability in some
probability space. Whereas we begin with instructions and seek paths,
Diaconis and Freedman begin with paths and consider instructions.

The following proposition asserts the necessity of the Crossings
property in Theorem \ref{thm:array_to_walk}.

\begin{Prop}\label{prop:up_down_counts}
 For any walk $w$ with start, terminal, and maximum levels $\cS$,
 $\cT$, and $\cL$ respectively, and for any $i\in [0,\cL+1]$,
 \begin{align}\label{eq:up_down_counts}
  u^{w}_{i-1} - d^{w}_i = \cf\{i\leq \cT\} - \cf\{i\leq \cS\},
 \end{align}
 where we define $u^w_{-1} = d^w_{\cL+1} = 0$.
\end{Prop}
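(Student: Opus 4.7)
The plan is to prove the identity by a telescoping indicator argument at each boundary between consecutive levels. First, I would pass to the shifted walk $\tilde{w}(j) := w(j) - \min_{k<n} w(k)$, so that the ``level'' of time $j$ is just $\tilde{w}(j)$, with $\tilde{w}(0) = \cS$ and $\tilde{w}(n) = \cT$. Up-crossings and down-crossings are unaffected by the shift.

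Fix $i \in [0,\cL+1]$ and define the indicator $f_j := \cf\{\tilde{w}(j) \geq i\}$ for $j \in [0,n]$. Because $w$ is simple, each step changes $\tilde{w}$ by $\pm 1$, so the difference $f_j - f_{j-1}$ is nonzero only when step $j$ crosses the threshold between levels $i-1$ and $i$. More precisely, $f_j - f_{j-1} = +1$ iff $\tilde{w}(j-1) = i-1$ and $\tilde{w}(j) = i$ (i.e.\ step $j$ is an up-step from level $i-1$), and $f_j - f_{j-1} = -1$ iff $\tilde{w}(j-1) = i$ and $\tilde{w}(j) = i-1$ (i.e.\ step $j$ is a down-step from level $i$); all other steps contribute $0$. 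Summing over $j \in [1,n]$ and telescoping yields
\begin{align*}
 u^{w}_{i-1} - d^{w}_i \;=\; \sum_{j=1}^n (f_j - f_{j-1}) \;=\; f_n - f_0 \;=\; \cf\{\cT \geq i\} - \cf\{\cS \geq i\},
\end{align*}
which is the desired identity.

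The only remaining point is to justify the boundary conventions. For $i = 0$, the identity reads $0 - d^w_0 = 0$, and indeed $d^w_0 = 0$ because level $0$ of $\tilde{w}$ is its minimum on $[0,n-1]$, so no step can descend from it; this matches $u^w_{-1} := 0$. For $i = \cL + 1$, the identity reads $u^w_{\cL} - 0 = \cf\{\cT \geq \cL+1\} - 0$, which correctly captures the only way to have $u^w_{\cL} > 0$: a final up-step at time $n$ that takes the first-passage bridge to level $\cL + 1$, consistent with the definition of $\cL$ as $\max_{j \in [0,n-1]} w(j)$.

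The argument is essentially bookkeeping; the only mild subtlety is that $\cL$ is the maximum of $\tilde{w}$ over $[0,n-1]$ rather than over $[0,n]$, so that $\tilde{w}(n) = \cL + 1$ is permitted, but this is exactly what the conventions $u^w_{-1} = d^w_{\cL+1} = 0$ are designed to accommodate.
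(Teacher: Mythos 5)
Your main argument is correct and complete, and it takes a genuinely different route from the paper. The paper proves the identity structurally: it decomposes $w$ around level $i$ into an initial approach, a collection of excursions away from level $i$, and a final escape, and then observes that $d^w_i$ and $u^w_{i-1}$ each equal the number of excursions below level $i$, plus $1$ according to whether the terminal (resp.\ start) level lies strictly below $i$. Your telescoping of the indicator $\cf\{\tilde w(j)\geq i\}$ is more elementary and handles every $i\in[0,\cL+1]$, including the first-passage boundary values, in a single uniform computation with no case analysis. What the paper's excursion decomposition buys is reuse: the same decomposition drives the inductive step in the proof of Theorem \ref{thm:array_to_walk} (swapping consecutive excursions), so the authors introduce it here.

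One remark in your closing paragraph is wrong, though harmlessly so. At $i=0$ the identity does not always read $0-d^w_0=0$: if $w$ is a first-passage bridge to a negative value then $\cT=-1$, the right-hand side is $\cf\{0\leq\cT\}-\cf\{0\leq\cS\}=-1$, and indeed $d^w_0=1$, because the final increment, at time $n$, descends from level $0$ to level $-1$ --- possible precisely because the minimum defining the levels is taken over $[0,n-1]$. (This is the paper's corollary immediately following the proposition.) Your assertion that no step can descend from level $0$ overlooks that final increment, which is the exact mirror of the $\tilde w(n)=\cL+1$ possibility you do acknowledge at the other end. The slip does not damage the proof: your telescoping already produces the correct value at $i=0$, and the conventions $u^w_{-1}=d^w_{\cL+1}=0$ need only the fact that no increment leaves level $-1$ or level $\cL+1$, which holds because every increment leaves the level $\tilde w(j-1)$ for some $j-1\in[0,n-1]$, hence a level in $[0,\cL]$.
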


\begin{proof}
 Consider the behavior of a walk $w$ around one of its levels $i$. The
 walk may be decomposed into: (i) an initial approach to level $i$
 (trivial when $i$ is the start level), (ii) several excursions above
 and below $i$, and (iii) a final escape from $i$ (trivial when $i$ is
 the terminal level). Such a decomposition is shown in Figure
 \ref{fig:level_decomp}.

 The down-crossing count $d_i$ must equal the number of excursions
 below level $i$, plus 1 if the terminal level is (and final escape
 goes) strictly below level $i$. Similarly, $u_{i-1}$ must equal the
 number of excursions below $i$, plus 1 if the start level is (and
 thus the initial approach comes from) strictly below level $i$.
\end{proof}

\begin{figure}[htb]\centering
 \input{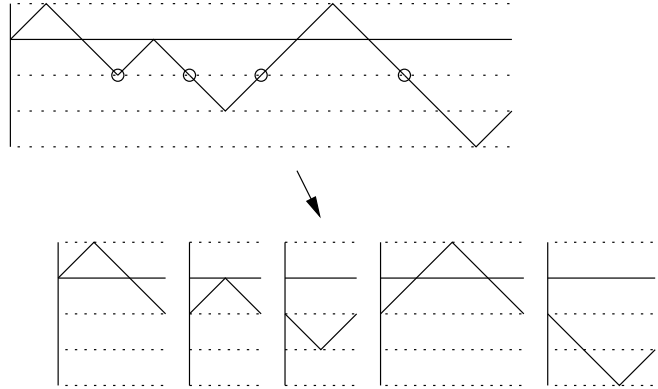}
 \caption{A walk decomposed into an initial approach to a level, excursions from that level, and a final escape.}\label{fig:level_decomp}
\end{figure}

We observe several special cases of this formula.
\begin{Cor}
 \begin{enumerate}
 \item If $w$ is a bridge then $u^w_i = d^w_{i+1}$ for each $i$.
 \item The down-crossing count $d^w_0 = 0$ unless $w$ is a
   first-passage bridge to a negative value, in which case $d^w_0 =
   1$.
 \item The up-crossing count $u^w_{\cL} = 0$ unless $w$ is a
   first-passage bridge to a positive value, in which case $u^w_{\cL}
   = 1$.
 \end{enumerate}
\end{Cor}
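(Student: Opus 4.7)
The plan is to obtain all three statements as direct specializations of Proposition \ref{prop:up_down_counts}, using the observed ranges of $\cS$ and $\cT$. Recall $\cS = -\min_{i<n}w(i) \geq 0$, so in particular $\cS \in [0,\cL]$, while $\cT$ lies in $[0,\cL]$ except in the two exceptional first-passage cases noted after equation \eqref{eq:special_levels}, where $\cT = -1$ (first-passage bridge to a negative value) or $\cT = \cL+1$ (first-passage bridge to a positive value).

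For part (i), if $w$ is a bridge then $w(n) = 0$, so by \eqref{eq:special_levels} we have $\cS = \cT$. Hence for every $i$, $\cf\{i\leq \cT\} - \cf\{i\leq \cS\} = 0$, and Proposition \ref{prop:up_down_counts} gives $u^w_{i-1} = d^w_i$, which reindexes to the desired identity $u^w_i = d^w_{i+1}$.

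For part (ii), take $i = 0$ in \eqref{eq:up_down_counts}. Since $u^w_{-1} = 0$ by convention and $\cS \geq 0$, we obtain
\[
 d^w_0 = \cf\{0\leq \cS\} - \cf\{0\leq \cT\} = 1 - \cf\{0\leq \cT\}.
\]
The indicator $\cf\{0 \leq \cT\}$ equals $1$ in every case except when $\cT = -1$, i.e.\ when $w$ is a first-passage bridge to a negative value; in that exceptional case $d^w_0 = 1$, and otherwise $d^w_0 = 0$. Similarly, for part (iii) take $i = \cL+1$. Since $d^w_{\cL+1} = 0$ by convention and $\cS \leq \cL$, we obtain
\[
 u^w_{\cL} = \cf\{\cL+1\leq \cT\} - \cf\{\cL+1\leq \cS\} = \cf\{\cL+1\leq \cT\}.
\]
This indicator is $0$ unless $\cT = \cL+1$, which occurs precisely when $w$ is a first-passage bridge to a positive value; in that case $u^w_{\cL} = 1$, as required.

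There is no substantive obstacle here; the only thing one must be careful about is explicitly ruling out the two exceptional values of $\cT$ in the generic cases and checking that $\cS$ never attains an exceptional value (because it is defined directly in terms of the minimum, never jumping outside $[0,\cL]$). Once these range facts are recorded, every conclusion reduces to evaluating the indicator function on the right-hand side of \eqref{eq:up_down_counts} at the boundary indices $i = 0$ and $i = \cL+1$, or in the bridge case exploiting $\cS = \cT$.
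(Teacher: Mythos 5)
Your proof is correct and follows exactly the route the paper intends: the corollary is stated there as immediate special cases of Proposition \ref{prop:up_down_counts}, obtained by setting $\cS=\cT$ when $w(n)=0$ and by evaluating \eqref{eq:up_down_counts} at the boundary indices $i=0$ and $i=\cL+1$ together with the observation that $\cS\in[0,\cL]$ while $\cT$ escapes $[0,\cL]$ only in the two first-passage cases. Nothing further is needed.
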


We prove the sufficiency of the Bookends and Crossings properties
for Theorem \ref{thm:array_to_walk} by structural induction within
certain equivalence classes of marked arrays.

\begin{Def}
 We say that two marked arrays are \emph{similar}, denoted
 $(\bx,\cP)\sim (\bx',\cP')$, if: (1) $\cP = \cP'$, (2) $u^{\bx}_i =
 u^{\bx'}_i$ and $d^{\bx}_i = d^{\bx'}_i$ for each $i$, and (3) the
 final increment of each row of $\bx$ equals the final increment of
 the corresponding row of $\bx'$.
\end{Def}

This equivalence relation corresponds to a relation between paths
observed in Diaconis and Freedman\cite{DiacFree80}. Note that
similarity respects both the Bookends and Crossings properties. The
following is the base case for our induction.

\begin{Lem}\label{lem:array_base_case}
 Suppose that $(\bx,\cP)$ is a valid marked array with the property
 that, within each row of $\bx$, all but the final increment are
 arranged with all down-steps preceding all up-steps. Then there
 exists a walk $w$ corresponding to $(\bx,\cP)$.
\end{Lem}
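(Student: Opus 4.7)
The plan is to prove the lemma by induction on the total number of entries $n := \sum_{i=0}^{\cL} |\x_i|$, essentially by showing that the Reconstitution algorithm of Figure~\ref{fig:reconst_alg} succeeds on $(\bx, \cP)$. The base case $n = 1$ is immediate: validity forces $\cL = 0$, $\x_0 = (\pm 1)$, and the algorithm outputs the single-step walk.

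For the inductive step, one computes $\cS$ from $(\bx, \cP)$ via \eqref{eq:special_rows}, lets $\epsilon = \pm 1$ be the first entry of $\x_\cS$, and forms the reduced array $\bx'$ by deleting that entry. The first routine task is to verify that the reduced marked array $(\bx', \cP)$, now viewed as starting at $\cS' := \cS + \epsilon$, is again valid and still satisfies the special-ordering hypothesis. Bookends is preserved because the final entries of rows are unchanged. Crossings is preserved because the only $(u,d)$-count that changes is at level $\cS$ (decreased by one), and this exactly matches the change in $\cf\{i \leq \cS\}$ on the right-hand side of \eqref{eq:up_down_counts_array} when $\cS$ is replaced by $\cS'$. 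The special ordering is preserved because deleting the first entry of a down-before-up sorted prefix leaves another such prefix.

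The main obstacle is the edge case $|\x_\cS| = 1$, in which $\x_\cS$ becomes empty under the pop and $\bx'$ is no longer a bona fide increment array. Set $\mu := \min\{\cP, \cT\}$ and $\nu := \max\{\cP, \cT\} = \mu + 1$. I would prove the structural claim that $|\x_\cS| = 1$ forces $\cS \in \{0, \cL\}$. Indeed, the special ordering and Bookends together imply that either $\cS \leq \mu$ with $\x_\cS = (+1)$, $u_\cS = 1$, $d_\cS = 0$, or $\cS \geq \nu$ with $\x_\cS = (-1)$, $d_\cS = 1$, $u_\cS = 0$. In the former sub-case, Crossings at $i = \cS$ reads $u_{\cS - 1} = \cf\{\cS \leq \cT\} - 1 = 0$; yet if $\cS \geq 1$, Bookends applied at level $\cS - 1 \leq \mu$ would demand $u_{\cS - 1} \geq 1$, a contradiction. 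Hence $\cS = 0$. The latter sub-case is symmetric and yields $\cS = \cL$. In each boundary situation we delete the now-empty end row and, in the $\cS = 0$ case, shift all level indices down by one (replacing $\cP$ by $\cP - 1$); routine checks confirm the resulting smaller marked array remains valid and special-ordered. Note also that $\cP \neq \cS$ whenever $n \geq 2$, since $\x_\cP$ contains the final increment of the desired walk while $\x_\cS$ contains the first, so $\cP$ survives the deletion.

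In every case, the inductive hypothesis then supplies a walk $w'$ of length $n - 1$ corresponding to the reduced marked array. Prepending to $w'$ the single step from $\cS$ to $\cS + \epsilon$ (with a unit vertical shift in the relabeled $\cS = 0$ sub-case) yields a walk $w$ of length $n$, and a row-by-row comparison shows that its increment array is $\bx$ and its preterminal level is $\cP$, completing the induction.
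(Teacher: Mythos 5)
Your argument is essentially sound but takes a genuinely different route from the paper's. The paper treats this lemma by exhibiting the walk in one shot: a direct descent from the start level to the minimum, an up--down sawing pattern between levels $0$ and $1$, then $1$ and $2$, up through $\cL-1$ and $\cL$, and a final descent to $\cT$ (Figure \ref{fig:MA_base_case}); the special ordering of the rows is precisely what makes that explicit construction read the array off correctly. You instead induct on the number of entries by popping the first instruction of the start row, i.e.\ you prove that the Reconstitution algorithm of Figure \ref{fig:reconst_alg} terminates successfully. Notice that your argument never actually invokes the special-ordering hypothesis: for a singleton row the Bookends property alone determines its entry, which is all your edge-case analysis uses. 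So, once patched, what you have is a direct proof of the sufficiency half of Theorem \ref{thm:array_to_walk} itself, which would let one dispense with both this lemma and the excursion-swapping inductive step in the paper. That is a legitimate and arguably cleaner route; it just means your write-up carries more weight than the lemma asks of it, and so deserves correspondingly more care.

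Two repairs are needed. First, you pop ``the first entry of $\x_\cS$'' without checking that $\cS\in[0,\cL]$, i.e.\ that the start row is nonempty; this is not part of the definition of validity. It does follow: summing \eqref{eq:up_down_counts_array} over $i\in[0,\cL+1]$ forces $\cS\in[-1,\cL+1]$, and $\cS=-1$ is excluded because it forces $\cT=-1$, whence Crossings at $i=0$ gives $d_0=0$ while Bookends forces the nonempty row $\x_0$ to end in a $-1$ (symmetrically for $\cS=\cL+1$). Second, the claim that $\cP\neq\cS$ whenever $n\geq 2$ is false as stated: the array $\x_0=(+1,+1)$, $\x_1=(-1)$ with $\cP=0$ is valid, satisfies the lemma's ordering hypothesis, has three entries, and has $\cS=\cP=0$ (it corresponds to the walk $0,1,0,1$). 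Moreover the justification you give is circular, since it refers to the first and last increments of ``the desired walk,'' which is the object under construction. What you actually need is only that $\cS\neq\cP$ in the edge case $|\x_\cS|=1$, and that is true by the same Crossings-plus-Bookends device you use elsewhere: if, say, $\cS=\cP=0$ with $\x_0=(+1)$, then $\cT=1$, Crossings at $i=1$ gives $d_1=u_0-1=0$, while Bookends forces $\x_1$ to end in a $-1$; the top case is symmetric. With these two points supplied, the induction goes through.
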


We sketch a proof with two observations. Firstly, the proof of this
lemma follows along the lines of the proof of Proposition
\ref{prop:up_down_counts}. Secondly, the corresponding walk $w$ would
be of the form: (1) an initial direct descent from start level to
minimum (except in the case $\cT = -1$, for which this descent may not
reach the minimum) followed by (2) an up-down sawing pattern between
the levels 0 and 1, and then between levels 1 and 2, on up to levels
$\cL-1$ and $\cL$, and finally (3) a direct descent from the maximum
level $\cL$ to the terminal level $\cT$ (except in the case $\cT =
\cL+1$, for which this descent is replaced by a single, final
up-step). A walk of this general form is shown in Figure
\ref{fig:MA_base_case}.

\begin{figure}[htb]\centering
 \includegraphics{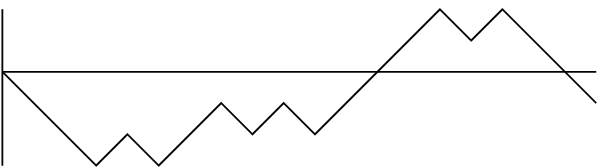}
 \caption{A walk corresponding to an array of the form described in Lemma \ref{lem:array_base_case}.}\label{fig:MA_base_case}
\end{figure}

We follow with the remainder of our induction argument.

\begin{proof}[Proof of Theorem \ref{thm:array_to_walk}]
 The necessity of the Bookends property is clear, and that of the
 Crossings property is asserted in Proposition
 \ref{prop:up_down_counts}. If there exists a walk corresponding to a
 given marked array then its uniqueness is clear from the algorithm
 stated in Figure \ref{fig:reconst_alg}. So it suffices to prove that
 for every valid marked array, there exists a corresponding walk. We
 proceed by structural induction within the $\sim$-equivalence
 classes.

 \emph{Base case:}\ Every $\sim$-equivalence class of valid marked
 arrays contains one of the form described in Lemma
 \ref{lem:array_base_case}. Thus, each class contains a marked array
 that corresponds to some walk.

 \emph{Inductive step:}\ Suppose that $(\bx,\cP)$ is a valid marked
 array that corresponds to a walk $w$. Let $\bx'$ denote an array
 obtained by swapping two consecutive, non-final increments within
 some row $\x_i$ of $\bx$, and leaving all other increments in
 place. Operations of this form generate a group action whose orbits
 are the $\sim$-equivalence classes; thus, it suffices to prove that
 $(\bx',\cP)$ corresponds to some walk.

 As in our proof of Proposition \ref{prop:up_down_counts}, we
 decompose $w$ into an initial approach to level $i$, excursions away
 from level $i$, and a final escape.

 Take, for example, the array:
 \begin{equation*}
  \x_0 = (1),\ \x_1 = (1,-1),\ \x_2 = (1,-1,1,-1),\\
  \x_3 = (1,-1,-1,1,-1),\ \x_4 = (-1,-1),
 \end{equation*}
 with $\cP = 0$. This corresponds to the walk $w$ shown in Figure \ref{fig:level_decomp}.
 $$\begin{array}{r|r|r|r|r|r|r|r|r|r|r|r|r|r|r|r}
    i     & 0&1&2&3&4&5&6&7&8&9&10&11&12&13&14\\\hline
    w(i)  & 0&1&0&-1&0&-1&-2&-1&0&1&0&-1&-2&-3&-2
   \end{array}$$
 Suppose that $\bx'$ is formed by swapping two consecutive increments
 within $\x_2$. Then we decompose the values of $w$ around level $2$,
 which corresponds to the value $w(j) = -1$:
 $$(0,1,0);\ (-1,0);\ (-1,-2);\ (-1,0,1,0);\ (-1,-2,-3,-2).$$ This is
 analogous to the decomposition depicted in Figure
 \ref{fig:level_decomp}. The three middle blocks are excursions.

 The non-final increments of $\x_i$ are the initial increments of
 excursions of $w$ away from level $i$ (in the special case $i= \cT$,
 the final increment of $\x_i$ also begins an excursion). Each $1$
 corresponds to an excursion above level $i$, and each $-1$ to an
 excursion below. In the example, the $(1,-1,1)$ that appear before
 the final increment of $\x_2$ correspond to the three excursions
 mentioned above. Swapping a consecutive `+1' and `-1' in $\x_i$ while
 leaving the $(\x_j)_{j\not= i}$ untouched corresponds to swapping a
 consecutive upward and downward excursion.

 Returning to the example, swapping the second and third increments in
 $\x_2$ corresponds to swapping the second and third excursions of $w$
 away from the value $-1$, resulting in the value sequence:
 $$(0,1,0);\ (-1,0);\ (-1,0,1,0);\ (-1,-2);\ (-1,-2,-3,-2).$$ Because
 the middle three blocks all begin at the value $-1$ and end adjacent
 to it, swapping two of these  result in the value sequence for a
 walk $w'$ -- that is, a sequence of values starting at 0, and with
 consecutive differences of $\pm 1$. Thus, there exists a walk $w'$
 corresponding to $(\bx',\cP)$.
\end{proof}

Theorem \ref{thm:array_to_walk} may be generalized to classify
instruction sets for walks on directed multigraphs. In that setting
the Crossings property is replaced by a condition along the lines of
``in-degree equals out-degree,'' and the Bookends property is replaced
by a condition resembling ``the last-exit edges from each visited,
non-terminal vertex form a directed tree.'' The latter of these has
been observed by Broder\cite{Broder89} and Aldous\cite{Aldous90} in
their study of an algorithm to generate random spanning trees. See
also \cite[p.\ 12]{Bollobas98}.

We now digress from our main thread of proving the bijection between
walks and quantile pairs to address the question: given a valid array
$\bx$, what can we say about the indices $\cP$ for which $(\bx,\cP)$
is valid? We begin by asking: what does the Bookends property look
like?

By the definition of $\cT$ given in \eqref{eq:special_rows}, it must
differ from $\cP$ by exactly 1. Therefore the two classifications $i
\leq \min\{\cP,\cT\}$ and $i \geq \max\{\cP,\cT\}$ are exhaustive and
non-intersecting. Given $\bx$, there exists a $\cP$ for which the
Bookends property is satisfied if and only if, for all $i$ below a
certain threshold $\x_i$ ends in an up-step, and for all $i$ above
that threshold $\x_i$ ends in a down-step; if this is the case then
$\cP$ and $\cT$ must stand on either side of that threshold.

Consider the following array.
\begin{align*}
 \left.\begin{array}{r}
  \x_4 = (-1)\\
  \x_3 = (+1,-1,-1)\\
 \end{array}\right\} &\\
 \left.\begin{array}{r}
  \x_2 = (-1,+1,+1,-1,+1)\\
  \x_1 = (+1,+1,-1,+1)\\
  \x_0 = (+1)
 \end{array}\right\} &
\end{align*}
The row-ending increments transition from $1$s to $-1$s between rows 2
and 3. Thus, the Bookends property requires that either $\cP =2$ and
$\cT = 3$ or vice versa. Both of these choices are consistent with
equation \eqref{eq:special_rows}.

\begin{Prop}\label{prop:Bookends_P}
 Given an increment array $\bx$, there are at most two distinct
 triples $(\cP,\cT,\cS)$ that satisfy: (i) equation
 \eqref{eq:special_rows}, (ii) the Bookends property, and (iii) the
 property $\cP\in [0,\cL]$. Furthermore, if there are two such triples
 then no entry is the same in both triples.
\end{Prop}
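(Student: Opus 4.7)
The plan is to show that conditions (i) and (ii) together force $\cP$ and $\cT$ to be a pair of consecutive integers straddling a uniquely determined split level of $\bx$; this leaves at most two ways to assign which of the pair is $\cP$ and which is $\cT$, and condition (i) then pins down $\cS$.

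First, because $x_\cP^* = \pm 1$, equation \eqref{eq:special_rows} forces $\cT \in \{\cP-1,\cP+1\}$, so $\{\cP,\cT\}$ is always a pair of consecutive integers, say $\{k,k+1\}$. The Bookends property then reads: $\x_i$ ends in $+1$ for every $i \leq k$, and ends in $-1$ for every $i \geq k+1$. Hence $k$ is determined by $\bx$ alone as the unique index at which the sequence of row-end signs transitions from $+1$s to $-1$s (with the boundary conventions $k = -1$ meaning every row ends in $-1$ and $k = \cL$ meaning every row ends in $+1$); if the row-end signs do not split coherently in this way, then no valid triple exists at all.

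Given $\{\cP,\cT\} = \{k,k+1\}$, there are at most two ordered assignments, namely $(\cP,\cT) = (k,k+1)$ and $(\cP,\cT) = (k+1,k)$, each of which is consistent with \eqref{eq:special_rows} and pins down $\cS = \cT - \sigma_\bx$. Condition (iii) may eliminate one of them if $k \notin [0,\cL]$ or $k+1 \notin [0,\cL]$, but in any case at most two triples remain. When both survive, their three coordinates all differ: the $\cP$-entries are $k$ and $k+1$, the $\cT$-entries are $k+1$ and $k$, and the $\cS$-entries are $k+1-\sigma_\bx$ and $k-\sigma_\bx$, each pair differing by exactly $1$. The only real obstacle is bookkeeping around the boundary cases $k = -1$ and $k = \cL$, where condition (iii) kills one of the two candidates so that the bound ``at most two'' is not tight; beyond that, the argument is a direct unwinding of the definitions of the Bookends property and of $\cT$, $\cS$ via \eqref{eq:special_rows}.
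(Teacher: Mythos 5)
Your proposal is correct and takes essentially the same route as the paper's proof: the Bookends property forces the row-end signs to split into $+1$s below a single threshold and $-1$s above it, $\{\cP,\cT\}$ must be the consecutive pair straddling that threshold, and \eqref{eq:special_rows} then pins down $\cS$, giving the two candidate triples $(a,a+1,a+1-\sigma_{\bx})$ and $(a+1,a,a-\sigma_{\bx})$ whose entries differ coordinatewise. The only cosmetic difference is that you fold the paper's separately treated first-passage-bridge cases (all rows ending in $+1$, or all in $-1$) into the boundary conventions $k=\cL$ and $k=-1$, where condition (iii) eliminates one candidate.
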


\begin{proof}
 We begin with the special cases corresponding to first-passage
 bridges. First, suppose that every row of $\bx$ ends in a `1'. Then
 the Bookends property and the bounds on $\cP$ are only satisfied if
 $\cP = \cL$, and then $\cT$ and $\cS$ are pinned down by
 \eqref{eq:special_rows}; in particular $\cT = \cL+1$. By a similar
 argument, if every row ends in a `-1' then $\cP$ must equal 0, and
 again $\cT$ and $\cS$ are specified by \eqref{eq:special_rows} with
 $\cT = -1$.

 Now suppose that some rows of $\bx = (\x_i)_{i=0}^{\cL}$ end in `1's
 and others in `-1's. Then there exists a $\cP$ for which the Bookends
 property is satisfied if and only if there is some number $a\in
 [0,\cL)$ such that, for $i\leq a$ row $\x_i$ ends in a `1', and for
   $i > a$ row $\x_i$ ends in a `-1'. So the Bookends property and
   \eqref{eq:special_rows} force $(\cP,\cT)$ to equal either $(a,a+1)$
   or $(a+1,a)$. Thus, the two triples which satisfy all three
   properties are
 \begin{align}
  (\cP,\cT,\cS) = (a,a+1,a+1-\sigma_{\bx})\rm{\; or\; }(a+1,a,a-\sigma_{\bx}).
 \end{align}
\end{proof}

We can now classify with which $\cP$ a given $\bx$ may form a valid
marked array.

\begin{Thm}\label{thm:array_to_walk_multiplicity}
 Let $\bx = (\x_i)_{i=0}^{\cL}$ be a valid array. If
 $\sigma_{\bx}\not= 0$ then $\bx$ corresponds to a unique walk, and if
 $\sigma_{\bx} = 0$ then $\bx$ corresponds to exactly two distinct
 bridges.
\end{Thm}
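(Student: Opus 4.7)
The plan is to combine Theorem \ref{thm:array_to_walk}, which identifies walks bijectively with valid marked arrays $(\bx,\cP)$, with Proposition \ref{prop:Bookends_P}, which already reduces the possible values of $\cP$ to at most two candidate triples $(\cP,\cT,\cS)$. The number of walks corresponding to $\bx$ equals the number of $\cP\in[0,\cL]$ for which $(\bx,\cP)$ satisfies the Crossings property as well, so I need only check Crossings for each candidate triple from Proposition \ref{prop:Bookends_P}.

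First I would dispose of the two special cases. If every row of $\bx$ ends in $+1$, the only option is $\cP=\cL$, $\cT=\cL+1$. Applying \eqref{eq:up_down_counts_array} at $i=\cL+1$ (where $d_{\cL+1}=0$) gives $u_{\cL} = 1-\cf\{\cL+1\leq \cS\}$; since $u_\cL\geq 1$ (the last entry of $\x_\cL$ is $+1$), this forces $\cS\leq\cL$, hence $\sigma_\bx=\cT-\cS\geq 1$. Symmetrically, if every row ends in $-1$ then $\sigma_\bx\leq -1$. In both special cases only one $\cP$ is admissible and $\sigma_\bx\neq 0$, consistent with the claim.

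In the mixed case, Proposition \ref{prop:Bookends_P} supplies the two candidate triples
\[ (\cP,\cT,\cS) = (a,a+1,a+1-\sigma_\bx) \quad \text{or} \quad (a+1,a,a-\sigma_\bx). \]
The Crossings equation depends on the triple only through the function $i\mapsto \cf\{i\leq \cT\}-\cf\{i\leq \cS\}$. If $\sigma_\bx=0$, both triples satisfy $\cS=\cT$, so this function is identically zero for each, and the Crossings condition reduces to the same requirement on $\bx$ in both cases; hence if $(\bx,\cP)$ is valid for one choice it is valid for the other as well. If $\sigma_\bx\neq 0$, evaluating the indicator difference at $i=a+1$ gives $1-\cf\{\sigma_\bx\leq 0\}$ for the first triple but $-\cf\{\sigma_\bx\leq -1\}$ for the second, and these disagree whenever $\sigma_\bx\neq 0$. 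Since the crossing counts of $\bx$ are fixed, at most one of the two triples can satisfy \eqref{eq:up_down_counts_array}.

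Combining cases, a valid $\bx$ admits a unique $\cP$ when $\sigma_\bx\neq 0$ and exactly two when $\sigma_\bx=0$. Via the bijection $\a$ of Theorem \ref{thm:array_to_walk} these marked arrays correspond, respectively, to one walk or to two distinct walks; the latter are bridges because $w(n)=\sigma_\bx=0$. The only real point of work is verifying the Crossings property is genuinely incompatible between the two candidate triples when $\sigma_\bx\neq 0$, and this is the small evaluation at $i=a+1$ above; everything else is bookkeeping on top of the two cited results.
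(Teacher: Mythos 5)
Your proposal is correct and follows essentially the same route as the paper: both arguments combine Theorem \ref{thm:array_to_walk} with Proposition \ref{prop:Bookends_P} and split on the sign of $\sigma_{\bx}$, concluding that the Crossings property eliminates one of the two Bookends candidates exactly when $\sigma_{\bx}\neq 0$. The only (harmless) difference is that you rule out the second candidate by evaluating the Crossings identity at the single index $i=a+1$, whereas the paper phrases the same elimination as the Crossings property uniquely determining $\cS$ from the crossing counts.
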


\begin{proof}
 By the uniqueness asserted in Theorem \ref{thm:array_to_walk} it
 suffices to prove that if $\sigma_{\bx} \not= 0$ (or if $\sigma_{\bx}
 = 0$) then there is a unique $\cP$ (respectively exactly two distinct
 values $\cP$) for which $(\bx,\cP)$ is valid. We proceed with three
 cases.

 \emph{Case 1: $\sigma_{\bx} > 0$}. By Theorem
 \ref{thm:array_to_walk}, for any valid choice of $\cP$ the resulting
 $\cS$ lies within $[0,\cL]$ -- a walk must start at a level from
 which it has some increments. By the Crossings property,
 \begin{align}
  u_{i-1} = d_{i}\rm{\ for\ }i \leq \cS\rm{\ and\ }u_{\cS+1} = d_{\cS+1} + 1.
 \end{align}
 These two properties uniquely specify $\cS$; and by Proposition
 \ref{prop:Bookends_P} our choice of $\cS$ uniquely specifies $\cP$.

 \emph{Case 2: $\sigma_{\bx} < 0$}. This dual to case 1. In this case,
 $\cS$ must satisfy
 \begin{align}
  u_{i} = d_{i+1}\rm{\ for\ }i \geq \cS\rm{\ and\ }u_{\cS-1} = d_{\cS} - 1.
 \end{align}
 Again $\cS$ is uniquely specified, and by Proposition
 \ref{prop:Bookends_P} $\cP$ is uniquely specified.

 \emph{Case 3: $\sigma_{\bx} = 0$}. In this case, the Crossings
 property asserts that $u_i = d_{i+1}$ for every $i$; this places no
 constraints on $\cP$, $\cT$, or $\cS$. By our assumption that $\bx$
 is valid, it therefore satisfies the crossings property regardless of
 $\cP$, so the only constraints on $\cP$ are coming from the Bookends
 property.

 The Crossings property tells us that
 $$d_0 = u_{-1} = 0\rm{\;\; and\;\;}u_{\cL} = d_{\cL+1} = 0,$$ so
 $\x_0$ ends in a `1' and $\x_{\cL}$ ends in a `-1'. We observed in
 the proof of Proposition \ref{prop:Bookends_P} that in this case
 there are either zero or two values $\cP$ for which $(\bx,\cP)$ is
 valid. And by our assumption that $\bx$ is valid there are two such
 values.
\end{proof}

\section{Partitioned walks}
\label{sec:saw}

In this section we introduce partitioned walks and define the map $\b$
suggested in equations \eqref{eq:q_decomp} and \eqref{eq:q_decomp2}. A
partitioned walk is a walk with its increments partitioned into
contiguous blocks with one block distinguished. Partitioned walks
correspond in a natural manner with marked arrays (not just valid
marked arrays). Theorem \ref{thm:saw_to_walk}, which is the main
result of this section, describes the $\b$-image of the valid marked
arrays. The elements of this image set are called \emph{quantile
partitioned walks}. In section \ref{sec:bijection_pf} we demonstrate
a bijection between the quantile partitioned walks and the quantile
pairs.

Let $w$ be a walk of length $n$, and let the $u^w_i$ and $d^w_i$ be
the up- and down- crossing counts of $w$ from level $i$, as defined in
the previous section.

\begin{Def}
 \label{def:saw_tooth}
 For $j\in [0,\cL + 1]$, define $t^w_j$ to be the number of
 increments of $w$ at levels below $j$:
 $$t^w_j := \sum_{i=0}^{j-1} u_i + d_i.$$ So $0= t^w_0 < \cdots <
 t^w_{\cL+1} = n$. We call $t^w_j$ the $j^{\rm{th}}$ \emph{saw tooth}
 of $w$.
\end{Def}

Whenever it is clear from context we  suppress the superscripts in
the saw tooth of a walk.

Note that the helper variable employed in the quantile bijection theorem, Theorem \ref{thm:q_bijectiopn}, appears in this sequence:
\begin{align}
 \Qp_w^{-1}(n) = t_{\cP+1}.\label{eq:helper_tooth}
\end{align}
This is because the $n^{\rm{th}}$ increment of $w$ is its final
increment at the preterminal level.

We are interested in the saw teeth in part because less considerations
go into the value of $Q(w)$ at $t^w_j$ than at some general $t$. In
particular, $Q(w)(t^w_j)$ ignores the order of increments within each
level of $w$.

\begin{Lem}\label{lem:Q_at_saw_teeth}
 Let $w$ be a walk with up- and down-crossing counts $(u_i)$ and
 $(d_i)$ and saw teeth $(t_i)$. Let $\cS$, $\cT$, and $\cL$ be the
 start, terminal, and maximum levels of $w$. Then
 \begin{align}
  Q(w)(t_j) &= \sum_{i<j}u_i - d_i\rm{\; for each\; }j\in [0,\cL+1].\label{eq:saw_0}
 \end{align}
 This may be restated in the closed form
 \begin{align}
  Q(w)(t_{j+1}) &= u_j + (j - \cS)_+ - (j - \cT)_+\rm{\; for each\; }j\in [-1,\cL].\label{eq:saw}
 \end{align}
\end{Lem}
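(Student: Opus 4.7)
The plan is to prove the two displayed identities in order, deriving the second from the first via the Crossings property of Proposition~\ref{prop:up_down_counts}.

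For equation~\eqref{eq:saw_0}, I would argue directly from the definition of the quantile permutation $\Qp_w$. By Definition~\ref{def:q_perm}, $\Qp_w$ sorts the left endpoints of the increments of $w$ by value, breaking ties by original index. Increments at level $i$ are exactly those whose left endpoint equals $\min w + i$, and by definition of the saw tooth $t_j = \sum_{i<j}(u_i+d_i)$ there are precisely $t_j$ increments at levels strictly below $j$. Hence the first $t_j$ terms in the sum $\sum_i x_{\Qp_w(i)}$ defining $Q(w)(t_j)$ are, as a multiset, exactly the increments of $w$ leaving levels $0,1,\ldots,j-1$. The level-$i$ block contributes $u_i$ copies of $+1$ and $d_i$ copies of $-1$, so summing gives $Q(w)(t_j)=\sum_{i<j}(u_i-d_i)$.

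For the closed form~\eqref{eq:saw}, I would apply \eqref{eq:saw_0} at index $j+1$, split off the $u_j$ term, and re-index to prepare for an application of the Crossings property. Explicitly, since $u_{-1}=0$,
\begin{align*}
 Q(w)(t_{j+1}) = \sum_{i=0}^{j}(u_i - d_i) = u_j + \sum_{i=0}^{j-1}u_i - \sum_{i=0}^{j}d_i = u_j + \sum_{i=0}^{j}(u_{i-1} - d_i).
\end{align*}
Now Proposition~\ref{prop:up_down_counts} gives $u_{i-1}-d_i = \cf\{i\leq\cT\} - \cf\{i\leq\cS\}$ for every $i\in[0,\cL+1]$, so the remaining sum becomes
\begin{align*}
 \sum_{i=0}^{j}\bigl(\cf\{i\leq\cT\} - \cf\{i\leq\cS\}\bigr) = \bigl(\min(j,\cT)+1\bigr) - \bigl(\min(j,\cS)+1\bigr) = (j-\cS)_+ - (j-\cT)_+,
\end{align*}
using the elementary identity $\min(j,a) = j - (j-a)_+$.

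The one subtle point is that $\cT$ can take the exceptional values $-1$ or $\cL+1$ when $w$ is a first-passage bridge, and also that we must handle $j=-1$; the main obstacle is simply verifying that the counting identity $|\{i\in[0,j]:i\leq a\}| = j+1-(j-a)_+$ remains correct in these boundary regimes. A quick case check ($a=-1$ gives an empty set and $(j-(-1))_+ = j+1$; $j=-1$ gives an empty range and both sides equal zero) shows the identity holds throughout the required range, so no separate treatment of first-passage bridges is needed and the closed form follows.
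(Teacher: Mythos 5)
Your proposal is correct and follows essentially the same route as the paper: equation \eqref{eq:saw_0} comes from observing that $Q(w)(t_j)$ sums exactly the increments of $w$ from levels below $j$, and \eqref{eq:saw} then follows by regrouping that sum and applying the Crossings identity of Proposition \ref{prop:up_down_counts}. Your explicit handling of the boundary values $\cT\in\{-1,\cL+1\}$ and $j=-1$ just fills in details the paper leaves implicit.
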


\begin{proof}
 We note that $Q(w)(t_j)$ is a sum of all increments of $w$ that
 belong to levels less than $j$. This proves equation
 \eqref{eq:saw_0}. Regrouping the terms of \eqref{eq:saw_0} and
 applying equation \eqref{eq:up_down_counts} then gives equation
 \eqref{eq:saw}.
\end{proof}

Equation \eqref{eq:saw} is a discrete-time form of Tanaka's formula,
the continuous-time version of which we recall in section
\ref{sec:Jeulin}. Briefly, the value $Q(w)(t_{j+1})$ corresponds to
the integral $\int_0^1\cf\{X(t) \leq a\}dX(t)$ in that it sums all
increments of $w$ which appear below the fixed level $j$; the term
$u_j$ corresponds to $\frac12\ell^a$ -- roughly half of the visits of
a simple random walk to level $j$ are followed by up-steps; and the
latter terms $j-\cS$ and $j-\cT$ correspond to $a$ and $a-X(1)$.
Further discussion of the discrete Tanaka formula may be found in
\cite{Kudzma82,CsorReve85_1,Szabados90,SzabSzek09}.

Equation \eqref{eq:saw} takes the following form in the bridge case.

\begin{Cor}
 If $w$ is a bridge then $Q(w)(t_{j+1}) = u_j$ for each $j$.
\end{Cor}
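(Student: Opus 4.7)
The plan is to read this off directly from equation \eqref{eq:saw} of Lemma \ref{lem:Q_at_saw_teeth}, which gives
\[
Q(w)(t_{j+1}) = u_j + (j-\cS)_+ - (j-\cT)_+
\]
for every $j \in [-1,\cL]$. So the only thing to verify is that the two positive-part terms cancel when $w$ is a bridge.

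For this, I would invoke the relation $\cS = \cT - w(n)$ from equation \eqref{eq:special_levels}. Since $w$ is a bridge, by definition $w(n)=0$, hence $\cS = \cT$. Consequently $(j-\cS)_+ = (j-\cT)_+$ for every $j$, the difference vanishes identically, and the right hand side of \eqref{eq:saw} collapses to $u_j$.

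There is no real obstacle; the corollary is a one-line specialization of the lemma, and its only purpose is to highlight the particularly clean form taken by the discrete Tanaka formula on bridges (where the occupation-time terms encoded by $(j-\cS)_+$ and $(j-\cT)_+$ disappear, leaving only the discrete local time term $u_j$). No induction, no case analysis, and no further appeal to the structure of the quantile permutation is required beyond what is already packaged inside Lemma \ref{lem:Q_at_saw_teeth}.
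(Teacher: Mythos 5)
Your argument is correct and is exactly the intended one: the paper states this corollary without proof as an immediate specialization of equation \eqref{eq:saw}, and the cancellation $(j-\cS)_+ = (j-\cT)_+$ via $\cS = \cT - w(n)$ with $w(n)=0$ is precisely the point. Nothing further is needed.
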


\begin{figure}[htb]\centering
 \input{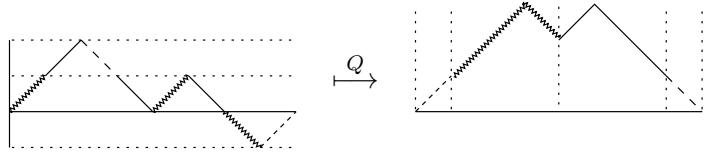}
 \caption{Increments emanating from a common level in $w$ appear in a contiguous block in $Q(w)$.}\label{fig:intro_p_walk}
\end{figure}

The saw teeth partition the increments of $Q(w)$ into blocks in the
manner illustrated in Figure \ref{fig:intro_p_walk}: increments from
the $j^{\rm{th}}$ block, between $t_j$ and $t_{j+1}$, correspond to
increments from the $j^{\rm{th}}$ level of $w$. This partition
provides the link between increment arrays and the quantile
transform. This is illustrated in Figure \ref{fig:saw_to_array}. The
saw teeth are shown as vertical dotted lines partitioning the
increments of $Q(w)$. Each block of this partition consists of the
increments from a row of $\bx_w$, stuck together in sequence.

\begin{figure}[htb]\centering
 \input{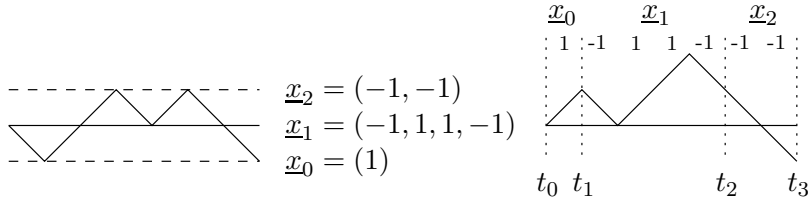}
 \caption{Left to right: a walk, its increment array, and its quantile transform partitioned by saw teeth.}\label{fig:saw_to_array}
\end{figure}

We will now define the map $\b$ alluded to in equations \eqref{eq:q_decomp} and \eqref{eq:q_decomp2} such that it will satisfy
\begin{align}
 \b\circ\a(w) = (Q(w),(t^w_i)_{i=0}^{\cL+1},\cP_w).
\end{align}
We define the \emph{partitioned walks} to serve as a codomain for this
map.

\begin{Def}
 A \emph{partitioned walk} is a triple $\bv =
 (v,(t_i)_{i=0}^{\cL+1},\cP)$ where $v$ is a walk, say of length $n$,
 $$0 = t_0 < t_1 < \cdots < t_{\cL+1} = n,$$ and $\cP\in
 [0,\cL]$. Here we are taking the $t_j$, $\cL$, and $\cP$ to be
 arbitrary numbers, rather than the saw teeth and distinguished levels
 of $v$. The name ``partitioned walk'' refers to the manner in which
 the times $t_i$ partition the increments of $v$ into blocks. We call
 the block of increments of $v$ bounded by $t_{\cP}$ and $t_{\cP+1}$
 the \emph{preterminal block} of $\bv$. We say that such a partitioned
 walk $\bv$ \emph{corresponds to a walk $w$} if $\bv =
 (Q(w),(t^w_i)_{i=0}^{\cL_w},\cP_w)$.
\end{Def}

\begin{Def}
 \label{def:beta_gamma}
 Define $\b$ to be the map which sends a marked array
 $((\x_i)_{i=0}^{\cL},\cP)$ to the unique partitioned walk
 $(v,(t_i)_{i=0}^{\cL+1},\cP)$ which satisfies
 \begin{align}
  \x_i = \left(\begin{array}{r}
 	v(t_i+1)-v(t_i),\; v(t_{i}+2)-v(t_i+1),\\
  		{\cdots},\; v(t_{i+1}) - v(t_{i+1}-1)
  		\end{array}\right)\rm{\ for every\;}i\in [0,\cL].\label{eq:p_walk_to_array}
 \end{align}
 Define $\g$ to be the map from partitioned walks to walk-index pairs
 given by
 \begin{equation}
  \g(v,(t_i),\cP) := (v,t_{\cP+1}).
 \end{equation}
\end{Def}

We address the map $\g$ in section \ref{sec:bijection_pf}. The map
$\b$ may be thought of as stringing together increments one row at a
time, as illustrated on the right in Figure \ref{fig:saw_to_array}, as
well as in Figure \ref{fig:p_walk_to_array}. In this latter example
neither the array nor the partitioned walk corresponds to any
(unpartitioned) walk.

\begin{figure}[htb]
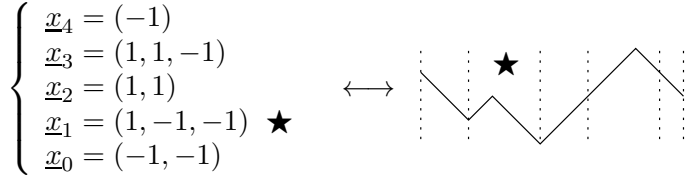
\centering
 $\displaystyle\left\{\begin{array}{l}
 		 \x_4 = (-1)\\
 		 \x_3 = (1,1,-1)\\
 		 \x_2 = (1,1)\\
 		 \x_1 = (1,-1,-1)\;\;\bigstar\\
 		 \x_0 = (-1,-1)
 		\end{array}\right.\;\;\;\longleftrightarrow$\;\;\;\cinput{p_walk_to_array.pstex_t}
 \caption{A marked array and its image under $\b$.}\label{fig:p_walk_to_array}
\end{figure}

While it is clear that $\b$ is a bijection, we are particularly
interested in the image of the set of \emph{valid} marked
arrays. Before we describe this image, we make a couple more
definitions.

\begin{Def}\label{def:p_walk_trough}
 Let $\bv = (v,(t_i)_{i=0}^{\cL+1},\cP)$ be a partitioned
 walk. Motivated by the later terms in equation \eqref{eq:saw} we
 define the \emph{trough function} for $\bv$ to be
 \begin{align}
  M_{\bv}(j) := (j - \cS)_+ - (j - \cT)_+,\label{eq:p_walk_trough}
 \end{align}
 where we define the indices $\cT$ and $\cS$ via
 \begin{align}
  \cT := \cP + v(t_{\cP+1}) - v(t_{\cP+1}-1)\rm{,\;\; and\;\; }\cS := \cT - v(t_{\cL+1}).\label{eq:special_blocks}
 \end{align}
 This is the partitioned walk analogue to equation
 \eqref{eq:special_rows} for marked arrays. If they exist, then we
 call the block of increments bounded by $t_{\cS}$ and $t_{\cS+1}$ the
 \emph{start block}, and the block bounded by $t_{\cT}$ and
 $t_{\cT+1}$ the \emph{terminal block}.
\end{Def}

\begin{Def}
  A partitioned walk has the \emph{Bookends property} if for $i \leq
  \cT, \cP$, the $t_{i+1}^{\rm{st}}$ increment of $v$ (i.e.\ the last
  increment of the $i^{\rm{th}}$ block) is an up-step; likewise, if $i
  \geq \cT, \cP$, then the $t_{i+1}^{\rm{st}}$ increment of $v$ is a
  down-step.

  A partitioned walk has the \emph{Saw property} if for each $j\in
  [0,\cL]$,
  \begin{align}
   v(t_{j+1}) + v(t_j) = t_{j+1} - t_j + 2M_{\bv}(j).\label{eq:saw_property_v}
  \end{align}

 A partitioned walk with the Bookends and Saw properties is called a
 \emph{quantile partitioned walk}.
\end{Def}

\begin{Thm}\label{thm:saw_to_walk}
 The map $\b$ bijects the set of valid marked arrays with the set of
 quantile partitioned walks.
\end{Thm}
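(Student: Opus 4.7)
The map $\b$ is already observed to be a bijection between all marked arrays and all partitioned walks. The plan is to show that this bijection restricts to one between the sublevel sets cut out by validity: namely, $(\bx,\cP)$ has the Bookends and Crossings properties if and only if $\b(\bx,\cP)$ has the Bookends and Saw properties.

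First I would check that the two Bookends properties match. By \eqref{eq:p_walk_to_array}, the final entry of $\x_i$ is precisely the $t_{i+1}$-st increment of $v$, so the statement ``$\x_i$ ends in $\pm 1$'' translates verbatim to ``the last increment of block $i$ of $v$ is an up-/down-step.'' Next I would confirm that the thresholds $\cT$ and $\cS$ computed in the marked-array setting via \eqref{eq:special_rows} coincide with those computed in the partitioned-walk setting via \eqref{eq:special_blocks}: both formulas express $\cT$ as $\cP$ plus or minus the preterminal row's/block's last increment, and both express $\cS$ as $\cT$ minus the total signed sum of all increments, which equals both $\sigma_\bx$ and $v(t_{\cL+1})$. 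Thus the Bookends condition on $(\bx,\cP)$ and on $\b(\bx,\cP)$ refer to the same pair of thresholds and are equivalent.

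The heart of the proof will be showing the Crossings property is equivalent to the Saw property. The key observation is that the $u_i$s and $d_i$s of $\bx$ are also the counts of up- and down-steps in block $i$ of $v$, so $t_{j+1}-t_j = u_j+d_j$ and $v(t_{j+1})-v(t_j) = u_j-d_j$ automatically. Combining these with the Saw identity \eqref{eq:saw_property_v} and $v(t_0)=0$, I would deduce by a short telescoping argument that the Saw property for all $j\in[0,\cL]$ is equivalent to the pair of identities
\begin{align*}
v(t_j) &= d_j + M_\bv(j), & v(t_{j+1}) &= u_j + M_\bv(j)
\end{align*}
holding for each $j$. Subtracting consecutive instances and using $M_\bv(j) - M_\bv(j-1) = \cf\{j-1\geq\cS\} - \cf\{j-1\geq\cT\} = \cf\{j\leq\cT\}-\cf\{j\leq\cS\}$ (a direct computation from the definition \eqref{eq:p_walk_trough}) produces exactly \eqref{eq:up_down_counts_array}; conversely, summing \eqref{eq:up_down_counts_array} from $0$ to $j$ and using $u_{-1}=0$ recovers the Saw identity.

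The main thing to watch out for is the boundary behavior. I would handle $j=0$ by using $v(t_0)=0$ together with the boundary case $u_{-1}=0$ of the Crossings property, and $j=\cL$ by using the constraint $d_{\cL+1}=0$. I would also check the first-passage-bridge cases $\cT\in\{-1,\cL+1\}$ and the corresponding values of $\cS$ separately to make sure the equivalence of Saw and Crossings goes through with the $(\cdot)_+$ truncations in $M_\bv$ behaving as expected. Once these boundary cases are dispatched, the equivalence of Crossings and Saw, together with the matching of Bookends established earlier, completes the proof.
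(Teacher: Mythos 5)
Your proposal is correct and follows essentially the same route as the paper: your identity $v(t_j)=d_j+M_{\bv}(j)$ is precisely the paper's \eqref{eq:trough_from_crossings} (Lemma \ref{lem:trough_bounds_saw}), and your differencing/summing step is the paper's reduction of both the Saw and Crossings properties to the common difference equation \eqref{eq:Saw_vs_Crossings} together with the boundary conditions \eqref{eq:trough_extremes}. The boundary issue you flag --- that the summation only reproduces the truncated trough function $M_{\bv}$ when $\cS\in[-1,\cL+1]$ --- is real and is exactly what the paper dispatches via Lemma \ref{lem:S_in_bounds_p_walk} in one direction and via Theorem \ref{thm:array_to_walk} in the other, so carrying out the checks you describe completes the argument.
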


The equivalence of the Bookends properties for partitioned walks
versus marked arrays is clear. We first define the saw path of a
partitioned walk, and we use this to generate several useful
restatements of the Saw property. Then we demonstrate the equivalence
of the Saw property of partitioned walks to the Crossings property of
arrays, and use this to prove the theorem.

\begin{Def}
 For any partitioned walk $\bv = (v,(t_i),\cP)$, we define the
 \emph{saw path} $S_{\bv}$ to be the minimal walk that equals $v$ at
 each time $t_i$.
\end{Def}

The saw teeth of a walk $w$ have been so-named because they typically
coincide with the maxima of the saw path $S_{\bv}$, where $\bv =
\b\circ\a(w)$.

\begin{Lem}\label{lem:trough_bounds_saw}
 Let $\bv$ be a partitioned walk, and let $u_j$ and $d_j$ denote the
 number of up- and down-increments of $v$ between times $t_j$ and
 $t_{j+1}$ for each $j$. Then the saw property for $\bv$ is equivalent
 to each of the following families of equations. For every
 $j\in[0,\cL]$,
 \begin{align}
  M_{\bv}(j) &= -d_j + \sum_{i<j} u_i - d_i\rm{, or equivalently}\label{eq:trough_from_crossings}\\
  M_{\bv}(j) &= \min_{t\in [t_j,t_{j+1}]}S_{\bv}(t).\label{eq:trough_bounds_saw}
 \end{align}
\end{Lem}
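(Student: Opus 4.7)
The plan is to rewrite the Saw property \eqref{eq:saw_property_v} as a simple algebraic identity linking $v(t_j)$, $d_j$, and $M_{\bv}(j)$, and then to interpret this one identity in two different ways. On the $j$th block of $\bv$ we have $v(t_{j+1}) - v(t_j) = u_j - d_j$ and $t_{j+1} - t_j = u_j + d_j$. Substituting these into \eqref{eq:saw_property_v} and cancelling the $u_j$ terms, I expect the Saw property to reduce to the equivalent identity
\begin{align*}
v(t_j) - d_j = M_{\bv}(j).
\end{align*}

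From this reformulation, both of the claimed equivalences follow. For \eqref{eq:trough_from_crossings}, I would use the telescoping identity $v(t_j) = \sum_{i<j}(u_i - d_i)$, which is immediate from the block structure, to rewrite $v(t_j) - d_j = M_{\bv}(j)$ as $M_{\bv}(j) = -d_j + \sum_{i<j}(u_i - d_i)$. For \eqref{eq:trough_bounds_saw}, the restriction of $S_{\bv}$ to $[t_j,t_{j+1}]$ is a simple walk from $v(t_j)$ to $v(t_{j+1})$ in $u_j + d_j$ steps, so it is forced to take exactly $d_j$ down-steps and $u_j$ up-steps. The minimality in the definition of $S_{\bv}$ forces all $d_j$ down-steps to precede the $u_j$ up-steps on this block, so its minimum on $[t_j, t_{j+1}]$ is attained at time $t_j + d_j$ and equals $v(t_j) - d_j$. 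Combining with the reformulated Saw property yields \eqref{eq:trough_bounds_saw}.

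The main obstacle I anticipate is clarifying the implicit definition of the saw path $S_{\bv}$: the phrase ``minimal walk that equals $v$ at each time $t_i$'' should be interpreted as requiring $S_{\bv}$ to attain the pointwise minimum on each block $[t_j, t_{j+1}]$ subject to the endpoint constraints $v(t_j)$ and $v(t_{j+1})$. This uniquely pins $S_{\bv}$ down as the walk whose $j$th block descends $d_j$ times and then ascends $u_j$ times. Once this reading is confirmed, the rest of the proof is just a few lines of algebra with $u_j + d_j = t_{j+1} - t_j$ and $u_j - d_j = v(t_{j+1}) - v(t_j)$.
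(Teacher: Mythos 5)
Your proposal is correct and follows essentially the same route as the paper: both reduce the Saw property by the substitutions $u_j+d_j=t_{j+1}-t_j$, $u_j-d_j=v(t_{j+1})-v(t_j)$ and the telescoping identity $v(t_j)=\sum_{i<j}(u_i-d_i)$ to the statement $M_{\bv}(j)=v(t_j)-d_j$, and both identify $\min_{t\in[t_j,t_{j+1}]}S_{\bv}(t)$ with $-d_j+\sum_{i<j}(u_i-d_i)$ directly from the definition of the saw path. Your explicit observation that the pointwise-minimal walk matching $v$ at the $t_i$ must take its $d_j$ down-steps before its $u_j$ up-steps on each block (so the block minimum is $v(t_j)-d_j$) is exactly the content the paper compresses into the phrase ``by definition of the saw path.''
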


\begin{proof}
 By definition of the saw path
 \begin{align}
  \min_{t\in [t_j,t_{j+1}]}S_{\bv}(t) = -d_j + \sum_{i<j} u_i - d_i.
 \end{align}
 Thus, it suffices to show that the saw property is equivalent to
 \eqref{eq:trough_from_crossings}.

 First we express a few quantities in terms of the $u_i$s and $d_i$s:
 \begin{align}
  t_{j+1} - t_j &= u_j + d_j,\label{eq:u_d_time_diff}\\
  v(t_{j+1}) - v(t_j) &= u_j - d_j\rm{, and}\label{u_d_value_diff}\\
  v(t_j) &= \sum_{i<j} u_i - d_i.\label{eq:v_from_crossings}
 \end{align}
 From these equations we obtain
 \begin{align*}
  v(t_{j+1}) + v(t_j) - (t_{j+1} - t_j) = - 2d_j + 2v(t_j) = -2d_j + 2\sum_{i<j} u_i - d_i.
 \end{align*}
 The saw property asserts that $2M_{\bv}(j)$ equals the expression on
 the left-hand side above. The claim follows.
\end{proof}

Figure \ref{fig:saw_down} shows two examples of
$$w \stackrel{\b\circ\a}{\longmapsto} (Q(w),(t^w_i),\cP).$$ The saw
teeth are represented by vertical dotted lines and the preterminal
block is starred. The saw path is drawn in dashed lines where it
deviates below $Q(w)$. In between each pair of teeth $t_j$ and
$t_{j+1}$ we show a horizontal dotted line at the level of
$M_{\bv}(j)$. Observe how the saw path bounces off of these horizontal
lines; this illustrates equation \eqref{eq:trough_bounds_saw}.

\begin{figure}[tbh]
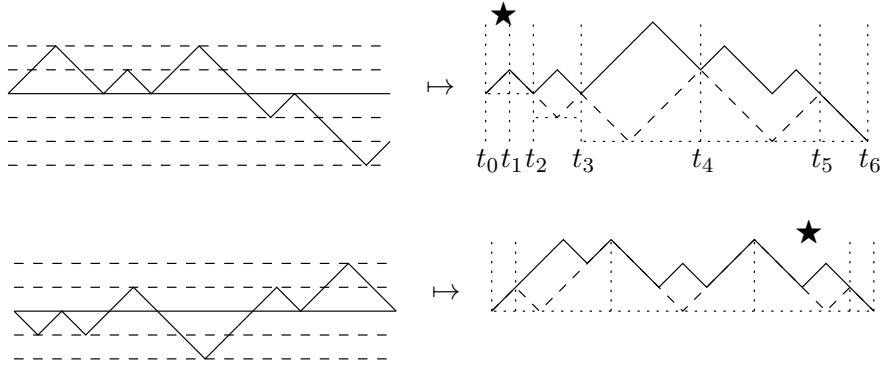
\centering
 \input{saw_down.pstex_t}\\[12pt]
 \input{saw_br.pstex_t}
 \caption{Two walks and their quantile transforms overlayed with saw teeth, saw paths, and troughs.}\label{fig:saw_down}
\end{figure}

In Figure \ref{fig:saw_bad_troughs} we show the saw path of a
partitioned walk $\bv$ which doesn't have Saw property. This diagram
follows the same conventions as the diagrams on the right hand side in
Figure \ref{fig:saw_down}.

\begin{figure}[htb]\centering
 \input{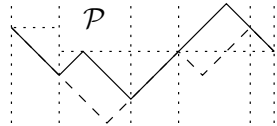}
 \caption{A general partitioned walk and its saw path.}\label{fig:saw_bad_troughs}
\end{figure}

By definition of the saw path
\begin{align}
 v(t)\geq S_{\bv}(t)\rm{\ for every\ }t.
\end{align}
This gives us the following corollary to Lemma
\ref{lem:trough_bounds_saw}.

\begin{Cor}\label{cor:trough_bound}
 If $\bv = (v,(t_i),\cP)$ is a partitioned walk with the Saw property
 then for $t \in [t_j,t_{j+1}]$,
 \begin{align}\label{eq:trough_bound}
  v(t)\geq M_{\bv}(j).
 \end{align}
\end{Cor}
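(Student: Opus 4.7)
The plan is to chain together two facts already available in the excerpt: the pointwise inequality $v(t) \ge S_{\bv}(t)$, which holds for every $t$ by the very definition of the saw path as the minimal walk agreeing with $v$ at the tooth times $t_i$; and the identity $M_{\bv}(j) = \min_{t \in [t_j, t_{j+1}]} S_{\bv}(t)$ from equation \eqref{eq:trough_bounds_saw} of Lemma \ref{lem:trough_bounds_saw}, which is exactly the characterization of the trough function under the Saw property.

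So for $t \in [t_j, t_{j+1}]$, I would first invoke the pointwise bound $v(t) \ge S_{\bv}(t)$, then bound $S_{\bv}(t)$ below by its minimum on $[t_j, t_{j+1}]$, and finally identify that minimum with $M_{\bv}(j)$ via \eqref{eq:trough_bounds_saw}. Composing these three steps yields $v(t) \ge S_{\bv}(t) \ge \min_{s \in [t_j,t_{j+1}]} S_{\bv}(s) = M_{\bv}(j)$, which is the claim.

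There is no real obstacle here; the work has been done in the preceding lemma, and the corollary is just the observation that a walk dominates its saw path and the saw path dominates its trough. The only thing to be careful about is that \eqref{eq:trough_bounds_saw} in Lemma \ref{lem:trough_bounds_saw} requires the Saw property as a hypothesis, which is precisely what we are assuming for $\bv$ in the corollary, so that invocation is licit.
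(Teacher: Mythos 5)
Your proposal is correct and is exactly the paper's argument: the paper notes $v(t)\geq S_{\bv}(t)$ by definition of the saw path and then deduces the corollary from equation \eqref{eq:trough_bounds_saw} of Lemma \ref{lem:trough_bounds_saw}, precisely the three-step chain you describe. Your remark that the Saw property is needed to license \eqref{eq:trough_bounds_saw} is also the right point of care.
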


\begin{Lem}\label{lem:S_in_bounds_p_walk}
 If $\bv = (v,(t_j)_{j=0}^{\cL+1},\cP)$ is a partitioned walk with the Saw property then the index $\cS$ of its start block falls within $[-1,\cL+1]$.
\end{Lem}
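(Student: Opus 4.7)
The plan is to evaluate the Saw property at the two endpoint indices $j=0$ and $j=\cL$, where the partitioned walk $\bv$ is pinned down by basic data (namely $v(t_0)=0$ and $v(t_{\cL+1}) = \cT - \cS$), and then show that assuming $\cS<-1$ or $\cS>\cL+1$ forces one of the up/down-crossing counts $u_j, d_j$ to be negative. Since $\cT = \cP \pm 1$ with $\cP\in[0,\cL]$, we automatically have $\cT\in[-1,\cL+1]$, and this fact will be the key inequality used against the trough function.

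First I would extract two clean identities from the Saw property \eqref{eq:saw_property_v}. Setting $j=0$ and using $v(t_0)=0$, $v(t_1)=u_0-d_0$, and $t_1-t_0 = u_0+d_0$ gives $M_{\bv}(0) = -d_0$, so in particular $M_{\bv}(0)\le 0$. Setting $j=\cL$ and using $v(t_{\cL+1}) = \cT-\cS$ together with $v(t_\cL) = (\cT-\cS) - u_\cL + d_\cL$ and $t_{\cL+1}-t_\cL = u_\cL + d_\cL$ gives $M_{\bv}(\cL) = (\cT-\cS) - u_\cL$, i.e.\ $u_\cL = (\cT-\cS) - M_{\bv}(\cL)$.

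For the lower bound $\cS \geq -1$, suppose $\cS < -1$. Then $(-\cS)_+ = -\cS > 1$, while $(-\cT)_+ \leq 1$ because $\cT \geq -1$. Hence
\[
 M_{\bv}(0) = (0-\cS)_+ - (0-\cT)_+ > 0,
\]
contradicting the identity $M_{\bv}(0) = -d_0 \leq 0$. For the upper bound $\cS \leq \cL+1$, suppose $\cS > \cL+1$. Then $(\cL-\cS)_+ = 0$, so $M_{\bv}(\cL) = -(\cL-\cT)_+$, and the identity above becomes
\[
 u_\cL = (\cT - \cS) + (\cL - \cT)_+.
\]
If $\cT \leq \cL$ this simplifies to $u_\cL = \cL - \cS < -1$; if $\cT = \cL+1$ it gives $u_\cL = \cL + 1 - \cS < 0$. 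Either way $u_\cL$ is negative, which is impossible. This completes the argument.

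I do not expect any serious obstacle: the whole proof is just plugging the endpoint data into the Saw property and reading off the piecewise-linear trough $M_{\bv}$. The only point requiring care is to remember that $\cT$ has the a priori range $[-1,\cL+1]$ coming from \eqref{eq:special_blocks} and $\cP\in[0,\cL]$, and to handle the boundary cases $\cT=-1$ and $\cT=\cL+1$ (the first-passage bridge cases) correctly in the two positive-part expressions.
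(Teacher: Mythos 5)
Your proof is correct and is essentially the paper's argument: the paper gets its contradictions from Corollary \ref{cor:trough_bound} applied at $t=0$ and $t=n$, and since $v(0)-M_{\bv}(0)=d_0$ and $v(n)-M_{\bv}(\cL)=u_{\cL}$, your endpoint identities together with the non-negativity of $d_0$ and $u_{\cL}$ are exactly those same inequalities, merely rederived directly from \eqref{eq:saw_property_v} instead of cited. The only organizational difference is that you split according to which bound on $\cS$ fails (using $\cT\in[-1,\cL+1]$), whereas the paper splits on the sign of $v(n)$; both organizations work.
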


\begin{proof}
 We consider three cases.

 \emph{Case 1:}\ $v(n) = 0$. Then $\cS = \cT$, and so the desired
 result follows from the definition of $\cT$ in
 \eqref{eq:special_blocks}, and from the property $\cP\in [0,\cL]$
 which is stipulated in the definition of a partitioned walk.

 \emph{Case 2:}\ $v(n) > 0$. Then $\cS < \cT \leq \cL+1$. But if $\cS
 < -1$ then $M(0) > 0$. This would contradict Corollary
 \ref{cor:trough_bound} at $j=0$, $t=0$.

 \emph{Case 3:}\ $v(n) < 0$. Then $\cS > \cT \geq -1$. If both
 $\cS,\cT > \cL$ then $M(\cL) = 0$; this would contradict Corollary
 \ref{cor:trough_bound} at $j=\cL$ with $t=n$. And if $\cT\leq \cL <
 \cS$ then
 \begin{align*}
  M(\cL) > (\cL - \cS) - (\cL - \cT) = v(n),
 \end{align*}
 which would again contradict Corollary \ref{cor:trough_bound} at the
 same point.
\end{proof}

In fact, it follows from Theorem \ref{thm:saw_to_walk} that
$\cS\in[0,\cL]$, but we require the weaker result of Lemma
\ref{lem:S_in_bounds_p_walk} to prove the theorem.

\begin{proof}[Proof of Theorem \ref{thm:saw_to_walk}]
 Let $(\bx,\cP)$ be a marked array and let $\b(\bx,\cP) = \bv =
 (v,(t_j)_{j=0}^{\cL+1},\cP)$. Clearly $(\bx,\cP)$ has the Bookends
 property for arrays if and only if $\bv$ has the Bookends property
 for partitioned walks. For the remainder of the proof, we assume that
 both have the Bookends property.

 It suffices to prove that $\bv$ has the Saw property if and only if
 $(\bx,\cP)$ has the Crossings property. In fact, the Saw property is
 equivalent to the Crossings property even outside the context of the
 Bookends property, but we sidestep that proof for brevity's sake.

 Let $(u_j)$ and $(d_j)$ denote the up- and down-crossing counts of
 $\bx$; these also count the up- and down-steps of $v$ between
 consecutive partitioning times $t_j$ and $t_{j+1}$. Let $\cS$ and
 $\cT$ denote the start and terminal row indices for $(\bx,\cP)$, or
 equivalently, the start and terminal block indices for $\bv$.

 The Saw property for $\bv$ is equivalent to the following three
 conditions:
 \begin{align}
  M_{\bv}(-1) &= 0,\;\; M_{\bv}(\cL+1) = v(n)\rm{, and}\label{eq:trough_extremes}\\
  M_{\bv}(j) - M_{\bv}(j-1) &= u_{j-1} - d_j\rm{ for each }j\in [0,\cL+1].\label{eq:Saw_vs_Crossings}
 \end{align}
 The Saw property implies \eqref{eq:trough_extremes} by way of Lemma
 \ref{lem:S_in_bounds_p_walk}; and given \eqref{eq:trough_extremes},
 equation \eqref{eq:Saw_vs_Crossings} is equivalent to
 \eqref{eq:trough_from_crossings}, which in turn is equivalent to the
 Saw property by Lemma \ref{lem:trough_bounds_saw}.

 The Crossings property for $(\bx,\cP)$ is equivalent to those same
 three conditions. The validity of $(\bx,\cP)$ implies
 \eqref{eq:trough_extremes} via Theorem \ref{thm:array_to_walk}:
 because the array corresponds to a walk, it must have $\cS\in
 [0,\cL]$. Furthermore, given \eqref{eq:trough_extremes} the Crossings
 property may be shown to be equivalent to \eqref{eq:Saw_vs_Crossings}
 by substituting in the formula \eqref{eq:p_walk_trough} for
 $M_{\bv}$.
\end{proof}

\section{The quantile bijection theorem}
\label{sec:bijection_pf}

In this section we give a lemma which will help us show that $\g$ is
injective on the quantile partitioned walks. We then apply this lemma
to prove Theorem \ref{thm:q_bijection}, the Quantile bijection theorem.

\begin{Lem}\label{lem:saw_geom}
 A partitioned walk $\bv = (v,(t_i)_{i=0}^{\cL+1},\cP)$ has the Saw
 and Bookends properties if and only if the following two conditions
 hold.
 \begin{enumerate}
  \item For every $j\in [0,\cP]$
  \begin{align}
   t_j = \inf\{t\geq 0 : v(t) = t_{j+1} - t + 2M_{\bv}(j) - v(t_{j+1})\}.\label{eq:saw_left}
  \end{align}
  \item For every $j\in [\cP+1,\cL]$
  \begin{align}
   t_{j+1} = \inf\{t\geq 0 : v(t) = t - t_j + 2M_{\bv}(j) - v(t_j)\}.\label{eq:saw_right}
  \end{align}
 \end{enumerate}
\end{Lem}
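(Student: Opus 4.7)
The plan is to reduce both directions of the equivalence to a single monotonicity observation applied to $v$ and two families of linear functions. Write
\begin{align*}
\ell_j(t) &:= t_{j+1} - t + 2M_{\bv}(j) - v(t_{j+1}), \\
\ell'_j(t) &:= t - t_j + 2M_{\bv}(j) - v(t_j),
\end{align*}
so that conditions (i) and (ii) say respectively that $t_j$ is the first integer time at which $v(t)=\ell_j(t)$, and that $t_{j+1}$ is the first integer time at which $v(t)=\ell'_j(t)$. Because $v$ has $\pm 1$ increments while $\ell_j$ and $\ell'_j$ have slopes $-1$ and $+1$, the function $v-\ell_j$ is non-decreasing with per-step differences in $\{0,2\}$, while $v-\ell'_j$ is non-increasing with per-step differences in $\{-2,0\}$. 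A direct rearrangement of equation \eqref{eq:saw_property_v} shows that the Saw property at index $j$ is equivalent to $v(t_j)=\ell_j(t_j)$, and equivalently to $v(t_{j+1})=\ell'_j(t_{j+1})$.

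For the forward direction, assume the Saw and Bookends properties and fix $j\in[0,\cP]$. Saw places $t_j$ in the set on the right of \eqref{eq:saw_left}. When $j=0$ this is already the infimum since $t_0=0$. When $j\geq 1$, the index $j-1$ lies in $[0,\cP-1]$, which falls within the ``up'' range of Bookends in both subcases $\cT=\cP\pm 1$, so the $(t_j)$-th increment of $v$ is $+1$. Then
\[
v(t_j-1)-\ell_j(t_j-1) \;=\; (v(t_j)-1)-(v(t_j)+1) \;=\; -2,
\]
and monotonicity of $v-\ell_j$ propagates this to $v(t)-\ell_j(t)\leq -2$ for all $t\in[0,t_j-1]$, ruling out earlier equality. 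Condition (ii) follows by the symmetric argument: for $j\in[\cP+1,\cL]$, Bookends forces a down-step at time $t_{j+1}$, giving $v(t_{j+1}-1)-\ell'_j(t_{j+1}-1)=+2$, after which the non-increasing monotonicity of $v-\ell'_j$ does the rest.

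Conversely, assume (i) and (ii). Each asserted infimum is a nonnegative integer and hence attained, so $v(t_j)=\ell_j(t_j)$ for $j\in[0,\cP]$ and $v(t_{j+1})=\ell'_j(t_{j+1})$ for $j\in[\cP+1,\cL]$; together these yield the Saw property at every $j\in[0,\cL]$. For Bookends, fix $j\in[1,\cP]$: if the $(t_j)$-th increment of $v$ were $-1$ rather than $+1$, then $v(t_j-1)=v(t_j)+1=\ell_j(t_j-1)$, contradicting that $t_j$ is the infimum in \eqref{eq:saw_left}. Hence the last increment of block $i$ is up for every $i\in[0,\cP-1]$, and a dual application of (ii) forces the last increment of block $i$ to be down for every $i\in[\cP+1,\cL]$. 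The single remaining index $i=\cP$ is not addressed by (i) or (ii), but equation \eqref{eq:special_blocks} defines $\cT=\cP\pm 1$ precisely according to the sign of the last increment of the preterminal block, so the Bookends condition at $i=\cP$ holds automatically.

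The main obstacle is the bookkeeping needed to align the index ranges in (i)--(ii) with the $\min(\cT,\cP)$ and $\max(\cT,\cP)$ cutoffs in the Bookends property, in particular noticing that neither (i) nor (ii) directly addresses $i=\cP$ and recognizing that this single index is pinned down by the definition of $\cT$ rather than by the conditions of the lemma. Beyond this, the argument is a routine parity-and-monotonicity calculation.
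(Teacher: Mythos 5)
Your proof is correct and takes essentially the same route as the paper's: both rearrange the Saw property at index $j$ into the statement that $t_j$ (resp.\ $t_{j+1}$) solves a linear equation, use the monotonicity of $v(t)+t$ (resp.\ $v(t)-t$) with step sizes in $\{0,2\}$ to identify ``least solution'' with the Bookends condition at that tooth, and dispose of the remaining index $i=\cP$ by noting that \eqref{eq:special_blocks} makes Bookends there automatic. Your write-up is, if anything, slightly more explicit than the paper's about the index bookkeeping.
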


\begin{proof}
 The Saw property of $\bv$ is equivalent, by algebraic manipulation,
 to the conditions that for $j\in [0,\cP]$, the $t_j$ must solve
 \begin{align}
  v(t) + t = t_{j+1} + 2M_{\bv}(j) - v(t_{j+1})\label{eq:saw_left_0}
 \end{align}
 for $t$, and for $j\in [\cP+1,\cL]$, the $t_{j+1}$ must solve
 \begin{align}
  v(t) - t = - t_j + 2M_{\bv}(j) - v(t_j).\label{eq:saw_right_0}
 \end{align}

 Now suppose that some $s$ solves equation \eqref{eq:saw_left_0} for
 some $j\leq \cP$. A time $r < s$ offers another solution to
 \eqref{eq:saw_left_0} if and only if
 $$v(r) + r = v(s) + s.$$ This is equivalent to the condition that $v$
 takes only down-steps between the times $r$ and $s$. Therefore $t_j$
 equaling the least solution to \eqref{eq:saw_left_0} is equivalent to
 the $t_j^{\rm{th}}$ increment of $v$ being an up-step, as required by
 the Bookends property.

 Similarly, suppose that $s$ solves equation \eqref{eq:saw_right_0}
 for some $j\geq \cP+1$. A time $r<s$ provides another solution if and
 only if
 $$v(r) - r = v(s) - s,$$ which is equivalent to the condition that
 $v$ takes only up-steps between $r$ and $s$. Therefore $t_j$ equaling
 the least solution to \eqref{eq:saw_right_0} is equivalent to the
 $t_{j+1}^{\rm{st}}$ increment of $v$ being a down-step, as required
 by the Bookends property.

 Equation \eqref{eq:special_blocks} defines $\cT$ from $\cP$ in such a
 way that the $t_{\cP+1}^{\rm{st}}$ increment of $v$ will always
 satisfy the Bookends property. Thus, if \eqref{eq:saw_left} holds for
 $j\in [0,\cP]$ and \eqref{eq:saw_right} holds for every $j\in
 [\cP+1,\cL]$, then the Bookends property is met at every $t_j$.
\end{proof}

Finally, we are equipped to prove our main discrete-time result.

\begin{proof}[Proof of the Quantile bijection, Theorem \ref{thm:q_bijection}.]
 Definitions \ref{def:alpha} and \ref{def:beta_gamma} define the maps
 $\a$, $\b$, and $\g$ in such a way that, for a walk $w$ of length
 $n$,
 \begin{align*}
  \g\circ\b\circ\a(w) = (Q(w),\Qp_w^{-1}(n)).
 \end{align*}
 Theorem \ref{thm:q_non_simple} asserts that this map sends walks to quantile pairs, and by
 Proposition \ref{prop:quantile_lvl_count} the set of walks with a
 given number of up- and down-steps has the same cardinality as the
 set of quantile pairs with those same numbers of up- and down-steps.
 Theorems \ref{thm:array_to_walk} and \ref{thm:saw_to_walk} assert that
 that $\b\circ\a$ bijects the walks with the quantile partitioned
 walks, so it suffices to prove that $\g$ is injective on the quantile
 partitioned walks.

 Now suppose that $\g(\bv) = \g(\bv') = (v,k)$ for some pair of
 quantile partitioned walks $\bv = (v,(t_i)_{i=0}^{\cL+1},\cP)$, and
 $\bv' = (v,(t_i')_{i=0}^{\cL'+1},\cP')$. We define
 \begin{align}
  \tM (i) := (i + v(n) - y_k)_+ - (i - y_k)_+\rm{,\ where\ }y_k = v(k) - v(k-1).\label{eq:trough_from_P}
 \end{align}
 Note that, by definition \ref{def:p_walk_trough},
 \begin{align}
  \tM(i) = M_{\bv}(\cP+i) = M_{\bv'}(\cP'+i)\rm{\ for every\ }i.
 \end{align}

 We prove by induction that $\bv$ must equal $\bv'$, and therefore
 that $\g$ is injective on the quantile partitioned walks.

 \emph{Base case:} $t_{\cP+1} = t'_{\cP'+1} = k$.

 \emph{Inductive step:} We assume that $t_{\cP+1-i} = t'_{\cP'+1-i} >
 0$ for some $i\geq 0$. Then by Lemma \ref{lem:saw_geom}
 \begin{align*}
   t_{\cP - i} = t'_{\cP' - i} = \inf\{t\geq 0 : v(t) = t_{\cP+1-i} - t + 2\tM(-i) - v(t_{\cP+1-i})\}.
 \end{align*}
 Likewise, if we assume $t_{\cP+1+i} = t_{\cP'+1+i}$ for some $i\geq
 0$ then by Lemma \ref{lem:saw_geom},
 \begin{align*}
  t_{\cP+2+i} = t'_{\cP'+2+i} = \inf\{t\geq 0 : v(t) = t - t_{\cP+1+i} + 2\tM(i+1) - v(t_{\cP+1+i})\}.
 \end{align*}

 By induction, $t_{\cP+i} = t'_{\cP'+i}$ wherever both are
 defined. Thus there is some greatest index $I\leq 0$ at which these
 simultaneously reach 0. This $I$ must equal both $-\cP$ and
 $-\cP'$. By the same reasoning $\cL = \cL'$. We conclude that $\bv =
 \bv'$.
\end{proof}

We also have the following special case.

\begin{Cor}\label{cor:quantile_br}
 The quantile transform of a bridge is a Dyck path. Moreover, for a
 uniform random bridge $b$ of length $2n$ and a fixed Dyck path $d$ of
 the same length,
 $$\Pr\{Q(b) = d\} = \frac{2k}{\binom{n}{2}},$$
 where $2k$ is the duration of the final excursion of $d$.
\end{Cor}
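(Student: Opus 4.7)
The first assertion is immediate from Theorem~\ref{thm:q_non_simple}. A bridge $b$ of length $2n$ satisfies $b(2n)=0\geq 0$, so $Q(b)$ is a non-negative walk; and since $Q(w)(n)=w(n)$ in general, $Q(b)(2n)=0$. Thus $Q(b)$ is a non-negative bridge to $0$, i.e.\ a Dyck path.

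For the probability formula, the plan is to invoke the Quantile bijection theorem, Theorem~\ref{thm:q_bijection}, to translate the count of $Q$-preimages of $d$ into a count of admissible helper values. Since preimages of a Dyck path $d$ under $Q$ correspond bijectively to quantile pairs $(d,k)$, and since any $w$ with $Q(w)=d$ must satisfy $w(2n)=d(2n)=0$ and hence be a bridge, the number of bridges mapping to $d$ equals the number of admissible $k$.

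Next I would unwind the definition of a quantile pair. For a Dyck path $d$ with $d(2n)=0$, the condition $d(j)\geq 0$ for $j\in[0,k)$ is automatic, so only the condition $d(j)>0$ for $j\in[k,2n)$ matters. If the final excursion of $d$ has duration $2k$ (so that $d(2n-2k)=0$ and $d(j)>0$ for $j\in(2n-2k,2n)$), then this condition on the helper is equivalent to $k'\in\{2n-2k+1,2n-2k+2,\ldots,2n\}$ (writing $k'$ for the helper to avoid overloading notation with the $2k$ of the statement). This is $2k$ admissible values, matching exactly the description in Figure~\ref{fig:Q_helper}: since any Dyck path of positive length ends in a down-step, the helper ranges over the entire final excursion above $v(n)=0$, including the endpoint $2n$.

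Finally, dividing the $2k$ bridge preimages of $d$ by the total number $\binom{2n}{n}$ of bridges of length $2n$ yields the claimed probability. There is no real obstacle here beyond careful bookkeeping of the admissible helper range; the work has already been done in Theorem~\ref{thm:q_bijection} and in the discussion accompanying Figure~\ref{fig:Q_helper}.
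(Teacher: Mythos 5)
Your proposal is correct and is essentially the paper's own argument: the paper offers no separate proof, treating the corollary as an immediate consequence of Theorem \ref{thm:q_bijection}, with the preimage count being exactly the $2k$ admissible helper values in the final excursion of $d$ (as in Figure \ref{fig:Q_helper}), divided by the number of bridges. Note that your denominator $\binom{2n}{n}$ is the correct count of bridges of length $2n$; the $\binom{n}{2}$ printed in the statement is evidently a typo for $\binom{2n}{n}$.
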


\section{The Vervaat transform of a simple walk}
\label{sec:Vervaat}

The quantile transform has much in common with the (discrete) Vervaat
transform $V$, studied in \cite{Vervaat79}. For discussions of this
and related transformations, see Bertoin\cite{Bertoin93} and
references therein. Like the quantile transform, the Vervaat transform
permutes the increments of a walk.

Breaking with usual conventions, let $\rm{mod}\ n$ to denote the map
from $\mathbb{Z}$ to the $(\rm{mod }n)$ representatives $[1,n]$
(instead of the standard $[0,n-1]$).

\begin{Def}
 Given a walk $w$ of length $n$, let
 \begin{align}
  \tau_V(w) = \min\{j\in [0,n] : w(j) \leq w(i)\rm{ for all }i\in [0,n]\}.
 \end{align}
 The \emph{Vervaat permutation} $\Vp_w$ is the cyclic permutation
 $i\mapsto i+\tau_V(w)\ \rm{mod}\ n$. As with the quantile transform,
 we define the \emph{Vervaat transform} $V$ by
 \begin{align}
  V(w)(j) = \sum_{i=1}^j x_{\Vp_w(i)}.
 \end{align}
\end{Def}

Compare this to definition \ref{def:Vervaat_cts}. An example of the
Vervaat transform appears in Figure \ref{fig:vervaat_eg}.

\begin{figure}[htb]\centering
 \input{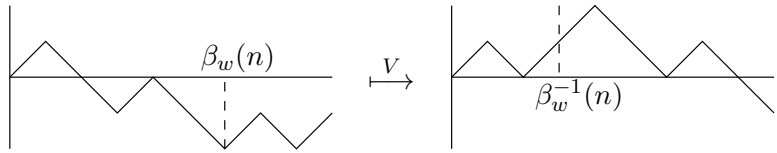}
 \caption{A walk transformed by $V$.\label{fig:vervaat_eg}}
\end{figure}

This transformation was studied by Vervaat because of its asymptotic
properties. As scaled simple random walk bridges converge in
distribution to Brownian bridge, the Vervaat transform of these
bridges converges in distribution to a continuous-time version of the
Vervaat transform, applied to the Brownian bridge.

Surprisingly, the discrete Vervaat transform has a very similar
bijection theorem to that for $Q$.

\begin{Def}
  A \emph{Vervaat pair} is a pair $(v,k)$ where $v$ is a walk of
  length $n$ and $k$ is a nonnegative integer such that $v(j)\geq 0$
  for $0\leq j \leq k$ and $v(j) > v(n)$ for $k\leq j < n$.
\end{Def}

\begin{Thm}\label{thm:Vervaat_wk}
 The map $w \mapsto (V(w), n-\tau_V(w))$ is a bijection between the
 walks of length $n$ and Vervaat pairs.
\end{Thm}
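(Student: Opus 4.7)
The plan is to construct an explicit inverse to the map $w \mapsto (V(w), n - \tau_V(w))$. Unlike the delicate bijection in Theorem \ref{thm:q_bijection}, the Vervaat transform is merely a cyclic shift of increments by $\tau := \tau_V(w)$, so ``inverting'' it amounts to unshifting; the substantive content is that the shift amount can be recovered directly from the helper variable $k = n - \tau$ of the Vervaat pair.

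For the forward direction, I would unroll the cyclic permutation $\Vp_w$ in the definition of $V$ to get the explicit formula
\[
 V(w)(j) = \begin{cases} w(\tau + j) - w(\tau) & \text{for } 0 \leq j \leq n - \tau, \\ w(n) - w(\tau) + w\bigl(j - (n-\tau)\bigr) & \text{for } n - \tau \leq j \leq n, \end{cases}
\]
the two cases agreeing at $j = n - \tau$. The top line is nonnegative because $w(\tau) = \min w$, so the first Vervaat-pair condition $V(w)(j) \geq 0$ on $[0,k]$ holds. For $j \in (n-\tau, n)$, setting $m = j - (n-\tau) \in (0,\tau)$, the bottom line equals $w(n) + \bigl(w(m) - w(\tau)\bigr)$, which strictly exceeds $w(n) = V(w)(n)$ since $\tau$ is the \emph{first} hitting time of the minimum, so $w(m) > w(\tau)$ for $0 < m < \tau$, and also $0 = w(0) > w(\tau)$ when $\tau > 0$. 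This verifies the second Vervaat-pair condition.

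For the inverse direction, given a Vervaat pair $(v,k)$ I would set $\tau := n-k$ and define a candidate preimage by
\[
 w(m) = \begin{cases} v(m+k) - v(k) & \text{for } 0 \leq m \leq \tau, \\ v(m-\tau) + v(n) - v(k) & \text{for } \tau \leq m \leq n, \end{cases}
\]
which agrees at $m = \tau$ where both expressions equal $v(n) - v(k)$. The increments of $w$ are visibly a cyclic rearrangement of the increments of $v$, so $w$ is a simple walk. Moreover $w(\tau) = v(n) - v(k)$, and the condition $v(j) \geq 0$ on $[0,k]$ gives $w(m) \geq w(\tau)$ for $m \in [\tau, n]$, while the strict condition $v(j) > v(n)$ on $[k, n)$ translates (after setting $m = j - k$) to $w(m) > w(\tau)$ for $m \in [0, \tau)$. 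Hence $\tau$ is the first time $w$ attains its minimum, so $\tau_V(w) = \tau$, and a direct substitution confirms that $V(w) = v$, completing the proof that the two constructions are mutually inverse.

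The only place that requires care is the boundary case $k = n$, equivalently $\tau = 0$: there the piecewise formulas collapse to $w = v$, the ``wrapped'' portion is empty, and one must check that the Vervaat-pair conditions reduce correctly to ``$v$ is a non-negative walk whose minimum is first attained at time $0$.'' This is immediate from $v(j) \geq 0$ on $[0,n]$, and $\tau_V(w) = 0$ follows. I do not expect any real obstacle beyond keeping track of this endpoint; the argument is much shorter than for $Q$ precisely because the Vervaat transform is a single cyclic shift rather than a full reordering by level.
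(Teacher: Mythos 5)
Your proposal is correct and follows essentially the same route as the paper: construct the explicit inverse by cyclically unshifting the increments by $k=n-\tau_V(w)$, check that the first-minimum property of $\tau_V$ forces the two Vervaat-pair inequalities on $V(w)$, and check conversely that the pair's inequalities force the constructed preimage to first attain its minimum at time $n-k$, so the maps compose to the identity. Your piecewise value formulas are just a more explicit rendering of the paper's increment-level argument, including the same treatment of the endpoint cases ($j=k$ and $k=n$).
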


\begin{proof}
 If we know that a pair $(v,k)$ arises in the image of $(V,K)$, then
 it is clear how to invert this map: let $y_i = v(i) - v(i-1)$ for
 each $i$; let $x_i = y_{i+k}$, where we take these indices $\rm{mod
 }n$; and we define $F(v,k)$ to be the walk with increments
 $x_i$. Then $F(V(w),n-\tau_V(w)) = w$. We  show that for every
 $w$ the pair $(V(w),n-\tau_V(w))$ is a Vervaat pair, and that every
 Vervaat pair satisfies $(v,k) = (V(F(v,k)),n-\tau_V(F(v,k)))$.

 Let $w$ be a walk of length $n$. By definition of $\tau_V$, for every
 $j\in [0,\tau_V(w))$ we have $w(j) > w(\tau_V(w))$. It follows that
   $V(w)(j) > v(n)$ for $j\in [n-\tau_V(w),n)$. Likewise, for $j\in
     [\tau_V(w),n]$ we have $w(j) \geq w(\tau_V(w))$; so it follows
     that $V(w)(j) \geq 0$ for $j\in [0,n-\tau_V(w)]$.

 Now, consider a Vervaat pair $(v,k)$. Then by definition of $F$ and
 by the properties of the pair, for $j\in[0,n-k)$ we have $F(v,k)(j) >
   F(v,k)(n-k)$, and for $j\in[n-k,n]$, we have $F(v,k)(j)\geq
   F(v,k)(n-k)$. Thus, $\tau_V(F(v,k)) = n-k$, and the result follows.
\end{proof}

To our knowledge, this result has not been given explicitly in the
literature. This statement strongly resembles our statement of Theorem
\ref{thm:q_bijection}, but we note two differences. The first is the
helper variable. The helper variable in this theorem equals
$\Vp_w^{-1}(n)$ except in the case where $w$ is a first-passage bridge
to a negative value, in which case $\Vp_w^{-1}(n) = n$ whereas $n -
\tau_w = 0$; in our statement of Theorem \ref{thm:q_bijection}, the
helper always equals $\Qp_w^{-1}(n)$ and may not equal 0. The second
difference is that the value $V(w)(k)$ must be non-negative, whereas
$Q(w)(k)$ may equal $-1$ (see Figure \ref{fig:Q_helper}). Again, this
only affects the case where $w(n) <0$.

\begin{Cor}\label{cor:Vervaat_count}
 Let $v$ be a walk of length $n$ and $k\in [1,n]$. If $v(n)\geq 0$
 then $(v,k)$ is a quantile pair if and only if it is a Vervaat
 pair. And in the case $v(n) < 0$, the pair $(v,k)$ is a quantile pair
 if and only if $(v,k-1)$ is a Vervaat pair. In particular, regardless
 of $v(n)$,
 \begin{align}\label{eq:Vervaat_count}
  \#\{w : V(w) = v\} = \#\{w : Q(w) = v\}.
 \end{align}
\end{Cor}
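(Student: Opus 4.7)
The plan is to establish the stated pairwise correspondences between quantile pairs and Vervaat pairs by direct comparison of definitions, and then invoke Theorems \ref{thm:q_bijection} and \ref{thm:Vervaat_wk} to convert the pair counts into preimage counts.

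The two definitions share the clause $v(j) > v(n)$ on $[k,n)$ and differ only in the treatment of the endpoint $v(k)$: a Vervaat pair additionally asks for $v(k) \geq 0$, while a quantile pair drops this and instead demands that $v$ be a quantile walk. In the case $v(n) \geq 0$, I would observe that the shared strict inequality forces $v(k) > v(n) \geq 0$ whenever $k < n$, and $v(k) = v(n) \geq 0$ when $k = n$, so the Vervaat constraint at $j=k$ is free. The reverse implication is immediate since the Vervaat condition on $[0,k]$ dominates $[0,k)$. Furthermore the conditions together with $v(n) \geq 0$ force $v \geq 0$ throughout $[0,n]$, so $v$ is automatically a quantile walk and no further check is needed.

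In the case $v(n) < 0$, I would line up a quantile pair $(v,k)$ with the shifted Vervaat pair $(v,k-1)$. The quantile clause $v(j)\geq 0$ for $j \in [0,k)$ gives $v(k-1) \geq 0 > v(n)$, which extends the shared inequality to $[k-1,n)$, matching Vervaat at the shifted index. Conversely, a Vervaat pair $(v,k-1)$ has $v(j) \geq 0$ on $[0,k-1]$ and $v(j) > v(n)$ on $[k-1,n)$; the two intervals overlap at $j = k-1$, so $v(j) > v(n)$ on all of $[0,n)$, which means $v$ does not hit $v(n)$ before time $n$ and is therefore a first-passage bridge to $v(n) < 0$, i.e.\ a quantile walk. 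The clause $v(j)\geq 0$ on $[0,k-1]$ then matches the quantile requirement on $[0,k)$.

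For the counting identity, the pairwise correspondence $k \leftrightarrow k$ (when $v(n) \geq 0$) or $k \leftrightarrow k-1$ (when $v(n) < 0$) maps the quantile pairs $(v,k)$ with $k \in [1,n]$ injectively into the Vervaat pairs. I would check that no Vervaat pair is left out: if $v(n) \geq 0$ the only candidate missed is $k = 0$, which would require $v(0) = 0 > v(n)$, a contradiction; if $v(n) < 0$ the only candidate missed is $k = n$, which would require $v(n) \geq 0$, again a contradiction. So the correspondence is a bijection between the full sets of pairs, and Theorems \ref{thm:q_bijection} and \ref{thm:Vervaat_wk} then convert this equality of cardinalities into \eqref{eq:Vervaat_count}. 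The only real obstacle is bookkeeping the half-open versus closed intervals around $j=k$ and the off-by-one shift in case $v(n) < 0$; there is no deeper difficulty.
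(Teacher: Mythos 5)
Your argument is correct and is essentially the argument the paper intends: the corollary is stated without a separate proof precisely because it follows, as you show, by matching the defining inequalities of quantile and Vervaat pairs (with the index shift $k\mapsto k-1$ when $v(n)<0$) and then invoking Theorems \ref{thm:q_bijection} and \ref{thm:Vervaat_wk} to turn the equality of pair counts into \eqref{eq:Vervaat_count}. Your endpoint bookkeeping, including the checks that no Vervaat pair with $k=0$ (resp.\ $k=n$) exists when $v(n)\geq 0$ (resp.\ $v(n)<0$) and that the Vervaat conditions force $v$ to be a quantile walk, supplies exactly the details the paper leaves implicit.
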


Equation \eqref{eq:Vervaat_count} is a key result as we pass into the
continuous-time setting.

\section{The quantile transform of Brownian motion}
\label{sec:Jeulin}

Our main theorem in the continuous setting compares the quantile
transform to a related path transformation.

We begin with some key definitions and classical results. Let
$(B(t),\ t\in[0,1])$ denote standard real-valued Brownian motion. Let
$(\Bbr(t),\ t\in[0,1])$ denote a standard Brownian bridge and
$(\Bex(t),\ t\in [0,1])$ a standard Brownian excursion -- see, for
example, M\"{o}rters and Peres\cite{MortersPeres} or
Billingsley\cite{Billingsley} for the definitions of these
processes. When we wish to make statements or definitions which apply
to all three of $B$, $\Bbr$, and $\Bex$, we  use $(X(t),\ t\in
[0,1])$ to denote a general pick from among these. Finally, we use
`$\stackrel{d}{=}$' to denote equality in distribution.

\begin{Def}\label{def:LT}
 We use $\ell_t(a)$ to denote an a.s.\ jointly continuous version of
 the \emph{(occupation density) local time} of $X$ at level $a$, up to
 time $t$. That is
 \begin{align}
  \ell_t(a) = \lim_{\e\downto 0} \frac{1}{2\e} \int_0^1 \cf\{|X(s)-a| < \e\} ds.\label{eq:LT_defn}
 \end{align}
 The existence of an a.s.\ jointly continuous version is well known,
 and is originally due to Trotter\cite{Trotter58}. We  often
 abbreviate
 $$\ell(a) := \ell_1(a).$$ Let $F(a)$ denote the \emph{cumulative
   distribution function} (or \emph{CDF}) \emph{of occupation
   measure},
 \begin{align}
  F(a) := \int_{-\infty}^a \ell^y dy = \rm{Leb}\{s\in [0,1] : X(s)\leq a\}.\label{eq:CDF_defn}
 \end{align}
 By the continuity of $X$, the function $F$ is strictly increasing in
 between its escape from 0 and arrival at 1. Thus we may define an
 inverse of $F$, the \emph{quantile function of occupation measure},
 \begin{align}
  A(s) := \inf\{a : F(a) > s\}\rm{\ for\ }s\in [0,1),\label{eq:QF_OM_defn}
 \end{align}
 and we extend this function continuously to define $A(1) :=
 \max_{s\in [0,1]}X(s)$.
\end{Def}

Recall that for a walk $w$, the value $Q(w)(j)$ is the sum of
increments from $w$ which appear at the $j$ lowest values in the path
of $w$. Heuristically, at least, the continuous-time analogue to this
is the formula
\begin{align}
 Q(X)(t) = \int_0^1 \cf\{X(s)\leq A(t)\} dX(s).\label{eq:q_integral_rep}
\end{align}
This formula would define $Q(X)(t)$ as the sum of bits of the path of
$X$ which emerge from below a certain threshold -- the exact threshold
below which $X$ spends a total of time $t$. But it is unclear how to
make sense of the integral: it cannot be an It\^{o} integral because
the integrand is not adapted. Perkins\cite[p.\ 107]{Perkins82} allows
us to make sense of this and similar integrals. We quote Tanaka's
formula:
\begin{align}
 \int_0^1 \cf\{X(s)\leq a\} dX(s) = \frac12\ell(a) + (a)_+ - (a-X(1))_+,\label{eq:Tanaka}
\end{align}
where $(c)_+$ denotes $\max(c,0)$. For more on Tanaka's formula see
e.g.\ Karatzas and Shreve\cite[p.\ 205]{KaratzasShreve}. The
particulars of Perkins' result are not important here -- we quote it
only as motivation. He defines
\begin{align}
 \int_0^1 \cf\{X(s)\leq A(t)\} dX(s) &:= \int_{-\infty}^{\infty} \cf\{a \leq A(t)\}dJ(a)\\
 	&= \int_{-\infty}^{\infty} \cf\{F(a) \leq t\}dJ(a),
\end{align}
where $J(a)$ equals the right-hand side of \eqref{eq:Tanaka}, which is
a semi-martingale with respect to a certain naturally arising
filtration. This motivates us in the following definition.

\begin{Def}\label{def:q_cts}
 The \emph{quantile transform of Brownian motion / bridge / excursion} is
 \begin{align}
  Q(X)(t) := \frac12\ell(A(t)) + (A(t))_+ - (A(t)-X(1))_+.\label{eq:q_cts_defn}
 \end{align}
 In the bridge and excursion cases this expression reduces to
 \begin{align}
  Q(X)(t) := \frac12\ell(A(t)).
 \end{align}
\end{Def}

We call upon classic limit results relating Brownian motion and its
local times to their analogues for simple random walk. The work here
falls into the broader scheme of limit results and asymptotics
relating random walk local times to Brownian local times. We rely
heavily on two results of Knight\cite{Knight62,Knight63} in this
area. Much else has been done around local time asymptotics; in
particular, Cs\'aki, Cs\"org\H{o}, F\"oldes, and R\'ev\'esz have
collaborated extensively, as a foursome and as individuals and pairs,
in this area. We mention a small segment of their work:
\cite{Revesz,Revesz81,CsorReve84,CsorReve85_1,CsorReve85_2,CsakCsorFoldReve09}. See
also Bass and Khoshnevisan\cite{BassKhos93_1,BassKhos93_2} and
Szabados and Sz\'ekeley\cite{SzabSzek05}.

\begin{Def}
 For each $n\geq 1$ let $\tau_n(0) := 0$ and
 \begin{align}
  \tau_n(j) := \inf\{t > \tau_n(j-1) : B(t) - B(\tau_n(j-1)) = \pm2^{-n}\}\rm{\; for\ }j\in(0,4^n].
 \end{align}
 We define a walk
 \begin{align}
  S_n(j) &:= 2^nB(\tau_n(j))\rm{\ for\ }j\in [0,4^n]\rm{ and}\\
  \bar S_n(t) &:= 2^{-n}S_n([4^nt])\rm{\ for\ }t\in [0,1].
 \end{align}
 From elementary properties of Brownian motion, $(S_n(j),\ j\geq 0)$
 is a simple random walk. We call the sequence of walks $S_n$ the
 \emph{simple random walks embedded in $B$}. Since we will be dealing
 with the quantile transformed walk $Q(S_n)$, we define a rescaled
 version:
 \begin{align*}
  \Qbar(t) := 2^{-n}Q(S_n)([4^nt]).
 \end{align*}
\end{Def}

Note that $\tau^n_{4^n}$ is the sum of $4^n$ independent,
\emph{Exp}($4^{n}$)-distributed variables. By a Borel-Cantelli
argument, the $\tau_n(4^n)$ converge a.s.\ to 1. So the walks $S_n$
depend upon the behavior of $B$ on an interval converging a.s.\ to
$[0,1]$ as $n$ increases.

The remainder of this section works to prove that, as $n$ increases,
$\Qbar$ almost surely converge uniformly to $Q(B)$.

\begin{Def}
 We define the \emph{(discrete) local time} of $S_n(j)$ at level $x \in \mathbb R$
 \begin{align*}
  L_n(x) &:= \sum_{j=0}^{4^n-1}(1-(x-[x]))\cf\{S_n(j) = [x]\} + (x-[x])\cf\{S_n(j) = [x]+1\}
 \end{align*}
 This is a linearly interpolated version of the standard discrete
 local time. We also require a rescaled version,
 \begin{align*}
  \bar L_n(x) &:= 2^{-n}L_n(2^nx).
 \end{align*}
\end{Def}

Note that for $x\in \mathbb Z$ we get
\begin{align*}
  L_n(x) &= \#\{j\in [0,4^n) : S_n(j) = x\}\rm{\ and}\\
 \bar L_n(2^{-n}x) &= \rm{Leb}\{t\in [0,1]\ :\ \bar S_n(t) = 2^{-n}x\}.
\end{align*}

We note that previous authors have stated convergence results for a
discrete version of Tanaka's formula. See Szabados and
Szekely\cite[p.\ 208-9]{SzabSzek09} and references therein. However,
these results are not applicable in our situation due to the random
time change $A(t)$ that appears in our continuous-time formulae.

We require several limit theorems relating simple random walk and its
local times to Brownian motion, summarized below.

\begin{Thm}
 \begin{align}
  &\bar S_n(\cdot) \to B(\cdot)\rm{\ a.s.\ uniformly}  &\rm{(Knight, 1962\cite{Knight62}).}\label{eq:Knight_walk}\\
  &\hspace{-4pt}\begin{array}{l}
   \min_t\{\bar S_n(t)\}\ \to\ \min_{t\in [0,1]}B_t\rm{\ and}\\
   \max_t\{\bar S_n(t)\}\ \to\ \max_{t\in [0,1]}B_t
  \end{array}	&\rm{(corollary to above).}\label{eq:max_min_cnvgc}\\
  &\bar L_n(\cdot) \to \ell(\cdot)\rm{\ a.s.\ uniformly}  &\rm{(Knight, 1963\cite{Knight63}).}\label{eq:Knight_LT}
 \end{align}
\end{Thm}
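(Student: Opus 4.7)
These three limits are classical and the paper attributes them to Knight, so my plan is to outline the standard arguments rather than reprove them in full, focusing on the structural pieces that are used elsewhere in the paper.

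For the first assertion, the plan is to exploit the Skorokhod-type construction: the stopping times $\tau_n(j)$ are the successive hitting times of the dyadic grid $2^{-n}\BZ$ starting from the most recent hit, so the increments $\tau_n(j)-\tau_n(j-1)$ are i.i.d.\ with mean $4^{-n}$ and variance of order $4^{-2n}$. By Doob's maximal inequality applied to the centered sums, together with Borel--Cantelli over $n$, one obtains $\sup_{j\leq 4^n}|\tau_n(j) - j 4^{-n}| \to 0$ almost surely. In particular $\tau_n(\lfloor 4^n t\rfloor) \to t$ uniformly in $t\in[0,1]$ almost surely. Combining this with the (almost sure) uniform continuity of $B$ on a slightly enlarged interval gives
\begin{align*}
\sup_{t\in[0,1]}\left|\bar S_n(t) - B(t)\right|
= \sup_{t\in[0,1]}\left|B(\tau_n(\lfloor 4^n t\rfloor)) - B(t)\right| \to 0\rm{ a.s.}
\end{align*}
which is \eqref{eq:Knight_walk}.

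The second assertion is essentially a free corollary: the maps $f\mapsto \sup_{[0,1]}f$ and $f\mapsto\inf_{[0,1]}f$ are $1$-Lipschitz with respect to the sup-norm, so uniform convergence of $\bar S_n$ to $B$ forces convergence of their extrema, giving \eqref{eq:max_min_cnvgc}.

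The third assertion is the deepest of the three, and I expect this to be the main obstacle; this is where Knight's 1963 paper does the real work. The plan is to reduce it to a statement about occupation measures. For each $n$, $\bar L_n(x)$ is essentially $4^{-n}$ times the number of visits of $S_n$ to the lattice level $\lfloor 2^n x\rfloor$ up to time $4^n$, which equals the occupation time of $\bar S_n$ in a thin strip around $x$ divided by the strip width $2^{-n}$. The $\tau_n$-time-change argument above shows this occupation time approximates the corresponding Lebesgue occupation time of $B$, which in turn converges to $\ell(x)$ by the definition \eqref{eq:LT_defn}. Upgrading pointwise convergence to uniform convergence requires a tightness/equicontinuity input: one uses H\"older continuity estimates for $\ell(\cdot)$ together with corresponding discrete estimates for $\bar L_n(\cdot)$ (both of which can be obtained from the Ray--Knight description of local times as squared Bessel processes, or from direct moment bounds on $L_n(x+h)-L_n(x)$), and then invokes Arzel\`a--Ascoli plus the pointwise limit to conclude uniform a.s.\ convergence as in \eqref{eq:Knight_LT}. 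As this is exactly the content of Knight's theorem, the actual proof in the paper should cite \cite{Knight63} directly rather than reproduce these details.
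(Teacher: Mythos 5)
Your treatment matches the paper's: the paper offers no proof of \eqref{eq:Knight_walk} or \eqref{eq:Knight_LT}, citing Knight (and noting R\'ev\'esz's elementary in-probability proof of the local time convergence), and justifies \eqref{eq:max_min_cnvgc} exactly as you do, by continuity of $\max$ and $\min$ under the uniform metric. Your additional Borel--Cantelli sketch for the first limit via the hitting times $\tau_n(j)$ is a correct rendering of the standard argument the paper leaves implicit, so there is nothing to flag.
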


Equation \eqref{eq:Knight_walk} is an a.s.\ variant of Donsker's
Theorem, which is discussed in standard textbooks such as
Durrett\cite{Durrett} and Kallenberg\cite{Kallenberg}. Equation
\eqref{eq:max_min_cnvgc} is a corollary to the Knight result: both max
and min are continuous with respect to the uniform convergence
metric. The map from a process to its local time process, on the other
hand, is not continuous with respect to uniform convergence; thus,
equation \eqref{eq:Knight_LT} stands as its own result. An elementary
proof of this latter result, albeit with convergence in probability
rather than a.s., can be found in \cite{Revesz81}, along with a sharp
rate of convergence. Knight\cite{Knight97} gives a sharp rate of
convergence under the $L^2$ norm.

\begin{Def}
 The \emph{cumulative distribution function (CDF) of occupation
   measure} for $S_n$, denoted by $F_n$, is given by
 \begin{align*}
  F_n(y) &:= \int_{-\infty}^y L_n(x)dx\rm{ and}\\
  \bar{F}_n(y) &:= 4^{-n}F_n(2^ny) = \int_{-\infty}^y \bar L_n(x)dx.
 \end{align*}
\end{Def}

Compare these to $F$, the CDF of occupation measure for $B$, defined
in equation \eqref{eq:CDF_defn}. We have restated it to highlight the
parallel to $F_n$. Also note that for integers $k$,
\begin{align}
 F_n(k) &= \sum_{j<k}L_n(j) + \frac12 L_n(k)\label{eq:discrete_LT_CDF}\\
 	&= \#\{i\in [0,4^n) : S_n(i) < k\} + \frac{1}{2}\#\{i\in [0,4^n) : S_n(i) = k\}.\notag
\end{align}

Equations \eqref{eq:Knight_LT} and \eqref{eq:max_min_cnvgc} have the
following easy consequence.
\begin{Cor}\label{cor:CDF_OM_cnvgc}
 As $n$ increases the $\bar F_n$ a.s.\ converge uniformly to $F$.
\end{Cor}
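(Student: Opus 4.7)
The plan is to reduce uniform convergence of the CDFs $\bar F_n$ to the uniform convergence of the local time profiles $\bar L_n$, exploiting the fact that both local times are compactly supported and bounded in total mass.

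First I would observe that both $\bar L_n$ and $\ell$ are compactly supported: $\bar L_n$ vanishes outside $[\min_t \bar S_n(t), \max_t \bar S_n(t)]$, and $\ell$ vanishes outside $[\min_{t\in[0,1]} B(t),\max_{t\in[0,1]} B(t)]$. By \eqref{eq:max_min_cnvgc}, a.s.\ there exists a (random) $M>0$ such that for all sufficiently large $n$, both supports lie in $[-M,M]$. In particular, for all $y\le -M$ both $\bar F_n(y)$ and $F(y)$ are $0$, and for all $y\ge M$ both equal $1$ (since $\int \ell(x)dx = 1$ and $4^{-n}\sum_j L_n(j) = 1$).

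Next I would bound the sup norm on $[-M,M]$. For any $y$, writing $\Delta_n(x) := \bar L_n(x) - \ell(x)$,
\begin{align*}
 |\bar F_n(y) - F(y)| = \Big|\int_{-\infty}^y \Delta_n(x)\,dx\Big| \le \int_{-M}^{M} |\Delta_n(x)|\,dx \le 2M\,\sup_{x\in\mathbb{R}} |\bar L_n(x) - \ell(x)|.
\end{align*}
By Knight's theorem \eqref{eq:Knight_LT}, the rightmost quantity tends to $0$ almost surely. Since this bound is uniform in $y$, we conclude $\sup_y |\bar F_n(y) - F(y)| \to 0$ a.s.

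The argument has essentially no obstacles: the only subtlety is that $M$ is random, but the Borel–Cantelli style argument embedded in \eqref{eq:max_min_cnvgc} provides such an $M$ on a set of full probability. Everything else is a trivial estimate using the bound $\sup_y \bigl|\int_{-\infty}^y f\bigr| \le \|f\|_{L^1}$ and the compact support. No issues arise with the atomic $\frac12 L_n(k)$ correction in \eqref{eq:discrete_LT_CDF} because $\bar F_n$ is defined as the integral of the linearly interpolated $\bar L_n$, which already encodes the correct discrete-to-continuous bookkeeping.
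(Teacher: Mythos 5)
Your argument is correct and is exactly the route the paper intends: the corollary is stated as an easy consequence of \eqref{eq:Knight_LT} and \eqref{eq:max_min_cnvgc}, and you use precisely those two inputs (uniform convergence of $\bar L_n$ to $\ell$ plus a common compact interval containing both supports) together with the elementary bound $\sup_y|\int_{-\infty}^y(\bar L_n-\ell)|\le 2M\|\bar L_n-\ell\|_\infty$. The only hair-splitting remark is that the linear interpolation makes $\bar L_n$ supported on $[\min_t\bar S_n(t)-2^{-n},\,\max_t\bar S_n(t)+2^{-n}]$, but this is absorbed by taking $M$ with a little slack, so there is no gap.
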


Because Brownian motion is continuous and simple random walk cannot
skip levels, the CDFs $F$ and $F_n$ are strictly increasing between
the times where they leave 0 reach their maxima, 1 or $4^n$
respectively. This admits the following definitions.

\begin{Def}
 We define the \emph{quantile functions of occupation measure}
 \begin{align*}
  A_n(t) &:= F_n^{-1}(t)\rm{ for }t\in (0,4^n)\rm{, and}\\
  \bar A_n(t) &:= \bar F_n^{-1}(t)\rm{ for }t\in (0,1),
 \end{align*}
 and we extend these continuously to define $A_n(0)$, $\bar A_n(0)$, $A_n(4^n)$ and $\bar A_n(1)$.
\end{Def}

Compare these to $A$ defined in equation \eqref{eq:QF_OM_defn} in the introduction.

\begin{Lem}\label{lem:quantile_cnvgc}
 As $n$ increases the $\bar A_n$ a.s.\ converge uniformly to $A$.
\end{Lem}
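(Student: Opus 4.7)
The plan is to deduce the uniform convergence $\bar A_n \to A$ from the uniform convergence $\bar F_n \to F$ established in Corollary~\ref{cor:CDF_OM_cnvgc} together with the convergence of the extrema in \eqref{eq:max_min_cnvgc}, using the standard analytic fact that uniform convergence of CDFs to a continuous, strictly increasing limit forces uniform convergence of the quantile inverses. All the work will be carried out on the probability-one event $\Omega_0$ on which both $\bar F_n\to F$ uniformly on $\mathbb R$ and $\min_t\bar S_n(t)\to \min_{t\in[0,1]}B(t)$, $\max_t\bar S_n(t)\to\max_{t\in[0,1]}B(t)$. Fix an $\omega\in\Omega_0$ for the rest of the argument, and write $m=\min_{t\in[0,1]}B(t)$, $M=\max_{t\in[0,1]}B(t)$, so that $F$ is continuous on $\mathbb R$, equal to $0$ on $(-\infty,m]$, equal to $1$ on $[M,\infty)$, and strictly increasing on $[m,M]$ (the strict monotonicity follows from continuity of $B$, since $B$ visits every level in $[m,M]$ on a set of positive time whenever it crosses it, more precisely via $\ell(a)>0$ for $a\in(m,M)$).

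First, I would upgrade the strict monotonicity of $F$ to a quantitative modulus. Given $\varepsilon>0$, set
\[
 \eta(\varepsilon) := \min\bigl\{F(a+\varepsilon)-F(a)\ :\ a\in[m-1,M]\bigr\},
\]
which is strictly positive by compactness and strict monotonicity of $F$ on $[m,M]$ (with the obvious convention that $F$ is constant on $(-\infty,m]$ and on $[M,\infty)$). Next, use the uniform convergence $\bar F_n\to F$ to pick $N$ so that $\sup_{a\in\mathbb R}|\bar F_n(a)-F(a)|<\eta(\varepsilon)/3$ for all $n\geq N$, and use \eqref{eq:max_min_cnvgc} to enlarge $N$ so that $|\min\bar S_n(t)-m|<\varepsilon$ and $|\max\bar S_n(t)-M|<\varepsilon$ as well.

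The core step is to translate $\bar F_n\approx F$ into $\bar A_n\approx A$. Fix $t\in(0,1)$ and write $a=A(t)$, so $a\in[m,M]$. If $\bar A_n(t)\geq a+\varepsilon$, then by the definition of $\bar A_n$ as the generalized inverse of the increasing function $\bar F_n$ one has $\bar F_n(a+\varepsilon-)\leq t=F(a)$, whence
\[
 F(a+\varepsilon)-F(a)\ \leq\ F(a+\varepsilon)-\bar F_n(a+\varepsilon-)\ \leq\ \sup_{b}|F(b)-\bar F_n(b)|\ <\ \eta(\varepsilon)/3,
\]
contradicting the definition of $\eta(\varepsilon)$. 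A symmetric argument rules out $\bar A_n(t)\leq a-\varepsilon$, and so $|\bar A_n(t)-A(t)|<\varepsilon$ for every $t\in(0,1)$ and every $n\geq N$. The endpoint cases $t=0,1$ are handled by $\bar A_n(0)=\min_t\bar S_n(t)\to m=A(0)$ and $\bar A_n(1)=\max_t\bar S_n(t)\to M=A(1)$, directly from \eqref{eq:max_min_cnvgc}. Since $\varepsilon$ was arbitrary, $\bar A_n\to A$ uniformly on $[0,1]$ on the event $\Omega_0$, which has probability one.

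The only delicate point, and the one I expect to require the most care, is the quantitative strict monotonicity of $F$ near the endpoints $m$ and $M$: the definition of $\eta(\varepsilon)$ must be set up so as to cover the regions where $A$ is near $m$ or $M$, which is why I would define $\eta$ on the slightly enlarged interval $[m-1,M]$ and also appeal to the endpoint convergence of the $\bar A_n$ separately. Everything else is a routine application of the bijection between CDFs and quantile functions for continuous strictly increasing targets.
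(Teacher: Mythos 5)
Your overall strategy---deducing uniform convergence of the quantile functions directly from Corollary~\ref{cor:CDF_OM_cnvgc} and \eqref{eq:max_min_cnvgc} via a modulus of strict monotonicity of $F$---is a legitimate and more elementary alternative to the paper's proof, which instead passes to the Skorokhod metric and builds an explicit time change $\lambda$ that is modified on $[0,\delta)\cup(1-\delta,1]$. But as written there is a genuine gap, and it sits exactly at the point you yourself flag as delicate. The quantity $\eta(\varepsilon)=\min\{F(a+\varepsilon)-F(a):a\in[m-1,M]\}$ is \emph{not} strictly positive: $F$ vanishes identically on $(-\infty,m]$, so for any $a\in[m-1,m-\varepsilon]$ the increment $F(a+\varepsilon)-F(a)$ equals $0$, and even at $a=M$ one has $F(M+\varepsilon)-F(M)=1-1=0$. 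Hence $\eta(\varepsilon)=0$ and the contradiction step (``$F(a+\varepsilon)-F(a)<\eta(\varepsilon)/3$ contradicts the definition of $\eta$'') is vacuous; enlarging the interval to $[m-1,M]$ makes this worse, not better, since it only adds flat region. More substantively, no single uniform modulus can work: for $a=A(t)$ with $t$ near $1$, $F(a+\varepsilon)-F(a)=1-F(a)\to 0$, and symmetrically $F(a)-F(a-\varepsilon)\to 0$ as $a\downarrow m$, so your one-sided contradiction arguments genuinely fail whenever $A(t)$ lies within $\varepsilon$ of $M$ (for the upper bound) or of $m$ (for the lower bound) --- a regime of $t$ near, but not equal to, the endpoints, which your separate treatment of $t=0,1$ does not cover.

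The repair is short but is a real extra case, and it is the analogue of the paper's surgery on its time change near $0$ and $1$: take the minimum defining $\eta(\varepsilon)$ only over $a\in[m,M-\varepsilon]$, where it is positive by compactness and the strict increase of $F$ on $[m,M]$, and treat the extreme regimes using the walk's extrema rather than the modulus. Concretely, for $t<1$ one has $\bar A_n(t)\leq \max_s\bar S_n(s)+2^{-n}$ (since $\bar F_n\equiv 1$ beyond that point), so if $A(t)>M-\varepsilon$ then \eqref{eq:max_min_cnvgc} gives $\bar A_n(t)-A(t)<2\varepsilon$ eventually; symmetrically, $\bar A_n(t)\geq\min_s\bar S_n(s)-2^{-n}$ handles $A(t)<m+\varepsilon$, and the same bounds dispose of $t=0,1$ (note the linear interpolation in $L_n$ makes $\bar A_n(0)$ and $\bar A_n(1)$ equal $\min_s\bar S_n(s)-2^{-n}$ and $\max_s\bar S_n(s)+2^{-n}$, a harmless $2^{-n}$ discrepancy from what you wrote). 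With that case split your argument closes and yields the lemma by a route genuinely different from the paper's Skorokhod-metric construction; without it, the central positivity claim is false and the proof does not go through.
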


\begin{proof}
 In passing a convergence result from a function to its inverse it is
 convenient to appeal to the Skorokhod metric. For continuous
 functions, uniform convergence on a compact interval $I \subset
 \mathbb R$ is equivalent to convergence under the Skorohod metric
 (see \cite{Billingsley}). Let $i$ denote the identity map on $I$, let
 $||\cdot||$ denote the uniform convergence metric, and let $\Lambda$
 denote the set of all increasing, continuous bijections on $I$. The
 Skorokhod metric may be defined as follows:
 \begin{align}
  \sigma(f,g) := \inf_{\lambda \in \Lambda}\max\{||i-\lambda||,\ ||f - g\circ\lambda||\}.
 \end{align}
 Thus, it suffices to prove a.s.\ convergence under $\sigma$.

 Fix $\e > 0$. By the continuity of $A$, there is a.s.\ some
 $0<\delta<\e$ sufficiently small so that
 $$A(\delta) - \min_{[0,1]}B(t) < \e\rm{ and }\max_{[0,1]} B(t) -
 A(1-\delta) < \e.$$ And by Equation \eqref{eq:max_min_cnvgc} and
 Corollary \ref{cor:CDF_OM_cnvgc} there is a.s.\ some $n$ so that, for
 all $m\geq n$,
 \begin{align*}
  \min_{t\in [0,1]} \bar S_m(t) &< A(\delta);\\
  \max_{t\in [0,1]} \bar S_m(t) &> A(1-\delta)\rm{; and}\\
  \sup_y |\bar F_m(y) - F(y)| &< \e.
 \end{align*}

 We  show that $\sigma(\bar A_n,A) < 3\e$.

 We seek a time change $\lambda: [0,1]\to [0,1]$ which is close to the
 identity and for which $\bar A_n\circ\lambda$ is close to
 $A$. Ideally, we would like to define $\lambda = \bar F_n\cdot A$ so
 as to get $\bar A_n\circ\lambda = A$ exactly. But there is a problem
 with this choice: because $\bar S_n$ and $B$ may not have the exact
 same max and min, $\bar F_n\circ A$ may not be a bijection on
 $[0,1]$. We turn this map into a bijection by manipulating its values
 near 0 and 1.

 We define the random time change on $[0,1]$
 \begin{align}
  \lambda(t) := \left\{\begin{array}{ll}
 			\frac{t}{\delta}\bar F_n(A(\delta))	& \rm{for }0\leq t < \delta\\
 			\bar F_n(A(t))	& \rm{for }\delta\leq t \leq 1-\delta\\
			1 + \frac{1-t}{\delta}(\bar F_n(A(1-\delta)) - 1)	& \rm{for }1-\delta<t\leq 1.
                          \end{array}\right.
 \end{align}

 By our choice of $n$ we get
 \begin{align*}
  \bar F_n(A(\delta)) > 0\rm{\; and\; }\bar F_n(A(1-\delta)) < 1.
 \end{align*}
 Thus $\lambda$ is a bijection.

 We now show that it is uniformly close to the identity. Since $t =
 F(A(t))$, our conditions on $n$ give us
 \begin{align*}
  ||\lambda(t) - t||_{t\in[\delta,1-\delta]} \leq ||bar F_n(A(t)) - F(A(t))|| < \e.
 \end{align*}
 For $t$ near 0
 \begin{align*}
  ||\lambda(t) - t||_{t<\delta} \leq |\lambda(\delta) - F(A(\delta))| < \e,
 \end{align*}
 and likewise for $t > 1 - \delta$.

 Next we consider the difference between $A$ and $\bar
 A_n\circ\lambda$. These are equal on $[\delta,1-\delta]$. For $t <
 \delta$ we get
 \begin{align*}
  A(t)\in[(\min_t B_t),\ A(\delta)]\rm{ and }\bar A_n\circ\lambda(t)\in [(\min_t \bar S_n(t)),\ A(\delta)].
 \end{align*}
 By our choices of $n$ and $\delta$, the lower bounds on these
 intervals both lie within $2\e$ of $\delta$. A similar argument works
 for $t > 1-\delta$. Thus $A(t)$ lies within $2\e$ of $\bar
 A_n\circ\lambda(t)$.

 We conclude that $\sigma(\bar A_m,a) < 3\e$ for $m\geq n$.
\end{proof}

For our purpose, the important consequence of the preceding lemma is
the following.

\begin{Cor}\label{cor:time_changed_cnvgc}
 As $n$ increases the $\bar L_n \circ \bar A_n$ a.s.\ converge
 uniformly to $\ell_1 \circ A$.
\end{Cor}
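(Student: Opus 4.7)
The plan is to combine the two uniform convergence results $\bar L_n \to \ell$ from Knight's theorem \eqref{eq:Knight_LT} and $\bar A_n \to A$ from Lemma \ref{lem:quantile_cnvgc} via a standard composition argument. I would start from the triangle inequality
$$\left|\bar L_n(\bar A_n(t)) - \ell(A(t))\right| \leq \left|\bar L_n(\bar A_n(t)) - \ell(\bar A_n(t))\right| + \left|\ell(\bar A_n(t)) - \ell(A(t))\right|$$
and bound each term uniformly in $t\in[0,1]$.

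The first term is bounded above by $\sup_x |\bar L_n(x) - \ell(x)|$, which tends to $0$ a.s.\ by \eqref{eq:Knight_LT}. For the second term, I would exploit that $\ell$ is a.s.\ continuous on $\mathbb R$ (indeed jointly continuous by Trotter's theorem) and vanishes outside the compact interval $[\min_{[0,1]}B,\ \max_{[0,1]}B]$. By \eqref{eq:max_min_cnvgc}, the ranges of $A$ and, for $n$ sufficiently large, of $\bar A_n$, all lie a.s.\ in a single (random) compact interval $K$. On $K$, $\ell$ is uniformly continuous, so $\sup_t |\ell(\bar A_n(t)) - \ell(A(t))|$ is bounded by the modulus of continuity of $\ell$ on $K$ evaluated at $\sup_t |\bar A_n(t) - A(t)|$, and this latter quantity tends to $0$ by Lemma \ref{lem:quantile_cnvgc}.

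The only care needed is to ensure that uniform convergence of $\bar L_n$, uniform convergence of $\bar A_n$, and the eventual containment of the ranges of $\bar A_n$ in a common compact set all hold on a single event of full probability. This is automatic via a countable intersection of full-probability events, so it poses no real obstacle. I do not anticipate any hard step here: the corollary is essentially a routine instance of the fact that composition preserves uniform convergence whenever the outer limit is continuous on a compact set containing the ranges of the inner functions.
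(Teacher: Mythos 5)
Your argument is correct and is exactly the route the paper intends: the paper leaves Corollary \ref{cor:time_changed_cnvgc} as an ``elementary exercise'' resting on \eqref{eq:Knight_LT}, Lemma \ref{lem:quantile_cnvgc}, and the a.s.\ uniform continuity of $\ell$, which is precisely your triangle-inequality decomposition. No gaps; the remark about intersecting countably many full-probability events is the right (and only) bookkeeping needed.
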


General results for convergence of randomly time-changed random
processes can be found in Billingsley\cite{Billingsley}, but in the
present case the proof of Corollary \ref{cor:time_changed_cnvgc} from
equation \eqref{eq:Knight_LT} and Lemma \ref{lem:quantile_cnvgc} is an
elementary exercise in analysis, thanks to the a.s.\ uniform
continuity of $\ell$.

We now make use of the up- and down-crossing counts described
in Definition \ref{def:crossing_counts}, and of the saw teeth in
Definition \ref{def:saw_tooth}. For our present purpose it is
convenient to re-index these sequences.

\begin{Def}
 Let $m_n = \min_{j<4^n}S_n(j)$. For each $i\geq m_n$ we define
 $u^n_i$ to be the number of up-steps of $S_n$ which go from the value
 $i$ to $i+1$. Likewise, let $d^n_i$ denote the number of down-steps
 of $S_n$ from value $i$ to $i-1$. Finally, let
 \begin{align}
  t^n_i = \sum_{j<i}(u^n_j+d^n_j).
 \end{align}
 We call these quantities \emph{up-} and \emph{down-crossing counts}
 and \emph{saw teeth}.
\end{Def}

Note that the strict inequality in the bound on $j$ in the definition
of $m_n$ is necessary.

Comparing the sequence $(u^{S_n}_i)$ in Definition
\ref{def:crossing_counts} with the sequence $(u^n_i)$, we have
$$u^n_i = u ^{S_n}_{i+m_n}.$$
Comparing the sequence $(t^{S_n}_i)$ defined in Definition
\ref{def:saw_tooth} with the sequence $t^n_i$, we have
\begin{align*}
  t^n_i = t^{S_n}_{i+m_n}.
\end{align*}

Note that
\begin{align}
 L_n(k) = u^n_k + d^n_k = t^n_{k+1} - t^n_k.\label{eq:LT_tooth_gaps}
\end{align}

At saw tooth times, the quantile transform $Q(S_n)$ is uniformly well
approximated by a formula based on discrete local time.

\begin{Lem}\label{lem:saw_tooth_cnvgc}
 Let $A^n_k$ denote $A_n(t^n_k)$. As $n$ increases the following
 quantities a.s.\ vanish uniformly in $k$:
 \begin{enumerate}\setlength{\itemsep}{0pt}
  \item $2^{-n}|L_n(k) - 2u^n_k|$,
  \item $2^{-n}|F_n(k) - t^n_k|$,
  \item $|F(2^{-n}k) - 4^{-n}t^n_k|$,
  \item $2^{-n}|A^n_k - k|$, and
  \item $\displaystyle 2^{-n}\left|Q(S_n)(t^n_k) - \left(\frac{1}{2}L_n(A^n_k) + (A^n_k)_+ - (A^n_k - S_n(4^n))_+\right)\right|.$ \label{item:cnvgc_at_saw_teeth}
 \end{enumerate}
\end{Lem}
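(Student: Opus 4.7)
The plan is to verify each item in turn. Items (2)--(4) are algebraic consequences of the piecewise-linear definitions combined with Knight's theorem \eqref{eq:Knight_LT}, while item (1) demands a probabilistic input; item (5) will then follow by combining (1) and (4) with Lemma \ref{lem:Q_at_saw_teeth}. For (2) and (4), use the identity $F_n(k) = t^n_k + \tfrac12 L_n(k)$ supplied by \eqref{eq:discrete_LT_CDF}: the uniform bound $L_n(k) \leq C\, 2^n$ from Knight's theorem handles (2) after the appropriate rescaling, and for (4) the identity places $t^n_k$ in $[F_n(k-1), F_n(k)]$ (using $F_n(k) - F_n(k-1) = \tfrac12(L_n(k-1) + L_n(k))$), so monotonicity of $A_n = F_n^{-1}$ forces $A^n_k \in [k-1, k]$ and hence $|A^n_k - k| \leq 1$. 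For (3), apply the triangle inequality
\[
 \bigl|F(2^{-n}k) - 4^{-n} t^n_k\bigr| \leq \bigl|F(2^{-n}k) - \bar F_n(2^{-n}k)\bigr| + 4^{-n}\bigl|F_n(k) - t^n_k\bigr|,
\]
with the first term vanishing uniformly by Corollary \ref{cor:CDF_OM_cnvgc} and the second equal to $\tfrac12 \cdot 2^{-n}\bar L_n(2^{-n}k)$, which vanishes uniformly since $\bar L_n$ is uniformly bounded.

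For (1), note that $L_n(k) - 2u^n_k = d^n_k - u^n_k$; the identity from Proposition \ref{prop:up_down_counts} (re-indexed to absolute values) bounds only the neighbouring pair $|u^n_{k-1} - d^n_k| \leq 1$, which does not directly control $|u^n_k - d^n_k|$. Instead, the strong Markov property of $S_n$ makes the next-step outcomes at successive visits to level $k$ a bounded martingale difference sequence with conditional quadratic variation $L_n(k)$ and terminal sum $u^n_k - d^n_k$. A variance-sensitive concentration estimate together with a union bound over the $O(2^n)$ relevant levels and Borel--Cantelli then yields $\max_k|L_n(k) - 2u^n_k| = O(2^{n/2}\sqrt n)$ almost surely, which vanishes after division by $2^n$.

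For (5), apply the discrete Tanaka formula from Lemma \ref{lem:Q_at_saw_teeth} to $S_n$ and re-index from levels to absolute values to obtain
\[
 Q(S_n)(t^n_k) = u^n_{k-1} + (k-1)_+ - (k-1-S_n(4^n))_+.
\]
Comparing with the target expression term by term: the two pairs of positive-part terms differ by at most $2|A^n_k - (k-1)| \le 4$ by item (4), while $|u^n_{k-1} - \tfrac12 L_n(A^n_k)| \leq |u^n_{k-1} - \tfrac12 L_n(k-1)| + \tfrac12|L_n(k-1) - L_n(A^n_k)|$. The first summand is $o(2^n)$ uniformly by item (1) at $k-1$, and the second, thanks to the piecewise linearity of $L_n$ on $[k-1,k]$ together with $A^n_k \in [k-1,k]$, is bounded by $|L_n(k) - L_n(k-1)|$, which is $o(2^n)$ uniformly by Knight's theorem and the uniform continuity of $\ell$ on compact sets. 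The main obstacle is (1): Proposition \ref{prop:up_down_counts} controls only a neighbouring pair of crossing counts, so the probabilistic concentration plus union bound over the $O(2^n)$ levels is unavoidable.
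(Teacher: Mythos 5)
Your overall architecture coincides with the paper's: item (i) is the only genuinely probabilistic step, handled by observing that the next-step outcomes at successive visits to level $k$ form a fair $\pm 1$ sequence (the paper phrases this as $u^n_k$ being conditionally \emph{Binomial}$(L_n(k),\frac12)$ given $L_n(k)$, you as a martingale difference array stopped at a bounded random time --- the same estimate), followed by a Gaussian/Azuma tail bound, a union bound over the $O(2^n)$ occupied levels, and Borel--Cantelli; item (v) is then the discrete Tanaka formula \eqref{eq:saw}, re-indexed from levels to values, combined with (i), (iv), the linear interpolation of $L_n$, and the uniform continuity of $\ell$, exactly as the paper intends. Your treatments of (iii) and (iv) are correct and in fact cleaner than the paper's: you obtain the deterministic bound $A^n_k\in[k-1,k]$ directly from the monotonicity of $F_n$, whereas the paper routes (iv) through (ii) and Lemma \ref{lem:quantile_cnvgc}.

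The gap is item (ii). Your identity $F_n(k)=t^n_k+\frac12 L_n(k)$ is the correct one (indeed $t^n_k=\#\{i<4^n : S_n(i)<k\}$, while \eqref{eq:discrete_LT_CDF} gives $F_n(k)=\#\{i : S_n(i)<k\}+\frac12 L_n(k)$), but it yields $2^{-n}|F_n(k)-t^n_k|=\frac12\bar L_n(2^{-n}k)$, which converges uniformly to $\frac12\ell$ and does \emph{not} vanish; the bound $L_n(k)\le C2^n$ gives only boundedness, so your argument does not ``handle (2)'' at the stated $2^{-n}$ normalization. The paper instead derives (ii) from (i) via the identity $F_n(k)=t^n_k+\bigl(\frac12 L_n(k)-u^n_k\bigr)$, which differs from yours by exactly $u^n_k$; the two cannot both hold, and your computation is the right one, which means item (ii) as printed is only true at the $4^{-n}$ scale. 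Fortunately that weaker version is all that is used downstream --- it is what your proof of (iii) actually invokes, and what Corollary \ref{cor:uniform_saw_teeth} requires --- so nothing else in the section is affected. But you should state the discrepancy and prove the correctly normalized claim explicitly rather than assert that (2) follows from a bound that only gives $O(1)$.
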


\begin{proof}
 The convergence of (ii) follows from that of (i) by equation
 \eqref{eq:discrete_LT_CDF}, which gives us
 \begin{align}
  F_n(k) = t^k_{n} + \left(\frac12 L_n(k) - u^n_k\right)
 \end{align}
 for integers $k$; (iii) then follows by Corollary
 \ref{cor:CDF_OM_cnvgc}. The convergence of (iv) follows from that of
 (ii) by Lemma \ref{lem:quantile_cnvgc} and the uniform continuity of
 $a$. And finally, (v) then follows from the others by the discrete
 Tanaka formula, equation \eqref{eq:saw}. Note that by re-indexing, we
 have replaced the $\cS$ and $\cT$ from that formula, which are the
 start and terminal levels, with $0$ and $S_n(4^n)$ respectively,
 which are the start and terminal values of $S_n$. Thus, it suffices
 to prove the convergence of (i).

 If we condition on $L_n(k)$ then $u^n_k$ is distributed as
 \emph{Binomial}$(L_n(k),\frac12)$. Our intuition going forward is
 this: if $L_n(k)$ is large then $(L_n(k) - 2u^n_k)/\sqrt{L_n(k)}$
 approximates a standard Gaussian distribution. Throughout the
 remainder of the proof, let $binom(n)$ denote a
 \emph{Binomial}$(n,\frac12)$ variable on a separate probability
 space.
 Fix $\e>0$ and let
 \begin{align*}
  C_1 = 1 + \max_t |B(t)|\rm{\; and\;}C_2 = 1 + \max_x \ell(x).
 \end{align*}
 Let $M$ be sufficiently large so that for all $n\geq M$,
 $$\Pr\left\{ |2^nC_2 - 2binom(2^nC_2)| > 2^n\e\right\} <
 \sqrt{2/\pi}\exp (-2^{n-1}\e^2/C_2).$$ Such an $M$ must exist by the
 central limit theorem and well-known bounds on the tails of the
 normal distribution. Let $N \geq M$ be sufficiently large so that for
 all $n\geq N$,
 $$\max_t|S_n(t)| < 2^nC_1\rm{ and }\max_{x}L_n(x) < 2^nC_2.$$
 Equations \eqref{eq:Knight_LT} \eqref{eq:max_min_cnvgc} indicate that
 $N$ is a.s.\ finite.

 We now apply the Borel-Cantelli Lemma.
 \begin{align*}
  &\sum_{n>M}\sum_k\Pr\left\{|L_n(k) - 2u^n_k| > 2^n\e;\ n > N\right\}\\
  	&\leq \sum_{n>M}2^{n+1} C_1\max_k\Pr\left\{|L_n(k) - 2u^n_k| > 2^n\e;\ n > N\right\}\\
  	&< \sum_{n>M}2C_1 e^n\max_{y\leq 2^nC_2}\Pr\{|y - 2binom(y)| > 2^n\e\}\\
  	&< \sum_{n>M}C_1\sqrt{\frac{8}{\pi}}\exp (n-(2^{n-1}\e^2/C_2)) < \infty.
 \end{align*}
 The claimed convergence follows by Borel-Cantelli.
\end{proof}

Our proof implicitly appeals to the branching process view of Dyck
paths. This perspective may be originally attributable to
Harris\cite{Harris52} and was implicit in the Knight papers
\cite{Knight62,Knight63} cited earlier in this section. See also
\cite{Pitman99} and the references therein.

In order to prove Theorem \ref{thm:gen_Jeulin}, we must extend the
convergence of (v) in the previous lemma to times between the saw
teeth. The convergence of (iii) leads to a helpful corollary.

\begin{Cor}\label{cor:uniform_saw_teeth}
  The sequence $\min_k |t - 4^{-n}t^n_k|$ a.s.\ converges to 0
  uniformly for $t\in [0,1]$.
\end{Cor}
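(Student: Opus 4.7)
The plan is to bound the gap between consecutive rescaled saw teeth uniformly in $k$, and then observe that any $t \in [0,1]$ lies within half such a gap from the nearest tooth.

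First I would note that by equation \eqref{eq:LT_tooth_gaps} the spacing between consecutive saw teeth is $t^n_{k+1} - t^n_k = L_n(k)$, so
\begin{align*}
 4^{-n}(t^n_{k+1} - t^n_k) = 4^{-n}L_n(k) = 2^{-n}\bar L_n(2^{-n}k).
\end{align*}
By Knight's local time convergence \eqref{eq:Knight_LT}, $\bar L_n \to \ell$ a.s.\ uniformly on $\BR$, and $\ell$ is a.s.\ bounded (it is continuous and vanishes outside the compact interval $[\min_t B(t),\ \max_t B(t)]$). Consequently there is a.s.\ a (random) constant $C$ and an index $N$ such that $\sup_x \bar L_n(x) \leq C$ for all $n\geq N$, whence
\begin{align*}
 \max_k 4^{-n}(t^n_{k+1} - t^n_k) \leq 2^{-n}C \qquad \rm{for\ all\ }n\geq N.
\end{align*}

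Next I would verify that the teeth $\{4^{-n}t^n_k\}$ actually cover all of $[0,1]$. Since $t^n_{m_n} = 0$ and $\sum_j (u^n_j + d^n_j) = 4^n$, we have $4^{-n}t^n_{m_n} = 0$ and $4^{-n}t^n_k = 1$ for all $k$ larger than $\max_{j<4^n} S_n(j)$. Hence any $t\in [0,1]$ lies in some interval $[4^{-n}t^n_k,\ 4^{-n}t^n_{k+1}]$, and therefore
\begin{align*}
 \min_k|t - 4^{-n}t^n_k| \leq \tfrac12 \max_k 4^{-n}(t^n_{k+1} - t^n_k) \leq \tfrac12 \cdot 2^{-n}C
\end{align*}
for all $n\geq N$, which tends to $0$ uniformly in $t$.

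There is no serious obstacle here; the only mild subtlety is confirming the uniform boundedness of $\bar L_n$, which follows because the uniform limit $\ell$ is bounded and supported on a bounded set. Everything else is an elementary consequence of Lemma \ref{lem:saw_tooth_cnvgc} (specifically the local-time bound underlying it) together with the definition of the saw teeth.
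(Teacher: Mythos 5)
Your argument is correct, but it takes a genuinely different route from the paper's. The paper makes the same initial reduction as you do (the rescaled teeth include $0$ and $1$, so it suffices that the maximal rescaled gap $4^{-n}\sup_k(t^n_k-t^n_{k-1})$ vanishes a.s.), but it then controls the gaps through the occupation CDFs: it combines the uniform continuity of $F$, the uniform convergence $\bar F_n\to F$ of Corollary \ref{cor:CDF_OM_cnvgc}, and the uniform vanishing of $|\bar F_n(\cdot)-4^{-n}t^n_{\cdot}|$ from Lemma \ref{lem:saw_tooth_cnvgc}, so that a gap is asymptotically an increment $F(2^{-n}(k+1))-F(2^{-n}k)$ of a uniformly continuous function over a mesh of size $2^{-n}$. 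You instead use the exact identity \eqref{eq:LT_tooth_gaps}, $t^n_{k+1}-t^n_k=L_n(k)$, so that the rescaled gap is $2^{-n}\bar L_n(2^{-n}k)$, and then invoke Knight's convergence \eqref{eq:Knight_LT} together with the a.s.\ boundedness of $\ell$ (continuous, supported on the range of $B$) to get a uniform bound $2^{-n}C$. Your route is more direct, bypasses the CDF machinery and Lemma \ref{lem:saw_tooth_cnvgc} altogether, and yields a quantitative rate $O(2^{-n})$ rather than mere uniform vanishing; the paper's route uses only the $F$-level estimates it has already assembled for the surrounding lemmas and would survive even without an a priori sup bound on the discrete local times. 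One small correction: your closing sentence credits the needed input to Lemma \ref{lem:saw_tooth_cnvgc}, but your proof never uses that lemma --- the only ingredients are \eqref{eq:LT_tooth_gaps} and \eqref{eq:Knight_LT}.
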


\begin{proof}
 Since $\min_k t^n_k = 0$ and $\max_k t^n_k = 4^n$, it suffices to
 prove that $4^{-n}\sup_k (t^n_k - t^n_{k-1})$ a.s.\ converges to
 0. This follows from: the uniform continuity of $F$, the uniform
 convergence of the $\bar F_n$ to $F$ asserted in Corollary
 \ref{cor:CDF_OM_cnvgc}, and the uniform vanishing of $|\bar F_n(k) -
 2^{-n}t^n_k|$ asserted in Lemma \ref{lem:saw_tooth_cnvgc}.
\end{proof}

We now prove a weak version of Theorem \ref{thm:q_cts_limit} before
demonstrating the full result.

\begin{Lem}\label{lem:q_lim_teeth}
 Let $Z_n$ be the process which equals $Q(S_n)$ at the saw teeth and
 is linearly interpolated in between, and let $\bar Z_n$ be the
 obvious rescaling. As $n$ increases, $\bar Z_n$ a.s.\ converges
 uniformly to $Q(B)$.
\end{Lem}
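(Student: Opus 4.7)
The plan is to prove the convergence first at the saw-tooth times and then extend to the full interval $[0,1]$ by linear interpolation and uniform continuity.

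First I would unpack the definitions. Write $s^n_k := 4^{-n}t^n_k$ for the rescaled saw teeth, so that $\bar{Z}_n(s^n_k) = 2^{-n}Q(S_n)(t^n_k)$, and note that $\bar{A}_n(s^n_k) = 2^{-n}A^n_k =: \bar{A}^n_k$. By part \itemref{item:cnvgc_at_saw_teeth} of Lemma \ref{lem:saw_tooth_cnvgc}, after rescaling by $2^{-n}$,
\begin{align*}
 \sup_k\left|\bar{Z}_n(s^n_k) - \left(\tfrac12\bar{L}_n(\bar{A}_n(s^n_k)) + (\bar{A}_n(s^n_k))_+ - (\bar{A}_n(s^n_k)-\bar{S}_n(1))_+\right)\right| \to 0\;\;\text{a.s.}
\end{align*}
So it suffices to compare the bracketed expression evaluated at $s^n_k$ with $Q(B)(s^n_k) = \tfrac12\ell(A(s^n_k)) + (A(s^n_k))_+ - (A(s^n_k)-B(1))_+$.

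Next I would feed in the three limit ingredients, all holding uniformly in $t \in [0,1]$: (a) $\bar{L}_n\circ\bar{A}_n \to \ell\circ A$ by Corollary \ref{cor:time_changed_cnvgc}; (b) $\bar{A}_n \to A$ by Lemma \ref{lem:quantile_cnvgc}, so that the two positive-part terms converge uniformly by continuity of $(\cdot)_+$; and (c) $\bar{S}_n(1)\to B(1)$ by \eqref{eq:Knight_walk}. Combining these three,
\begin{align*}
 \sup_k |\bar{Z}_n(s^n_k) - Q(B)(s^n_k)| \to 0 \;\;\text{a.s.}
\end{align*}
This handles the saw-tooth skeleton.

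Finally I would extend from the skeleton to all of $[0,1]$. The process $Q(B)$ is a continuous function on the compact interval $[0,1]$ (it is a continuous composition of $\ell$, $A$, and $B(1)$), hence is a.s.\ uniformly continuous with modulus $\omega$. Given $\e>0$, pick $\d>0$ with $\omega(\d)<\e$. By Corollary \ref{cor:uniform_saw_teeth}, for $n$ sufficiently large we have $\max_k(s^n_{k+1}-s^n_k)<\d$, and by the previous step $\sup_k|\bar Z_n(s^n_k)-Q(B)(s^n_k)|<\e$. For any $t\in[s^n_k,s^n_{k+1}]$, linearity of $\bar Z_n$ on this subinterval gives
\begin{align*}
 |\bar Z_n(t) - Q(B)(t)| &\leq \max\bigl(|\bar Z_n(s^n_k) - Q(B)(t)|,\ |\bar Z_n(s^n_{k+1}) - Q(B)(t)|\bigr) \\
 &\leq \max_{j\in\{k,k+1\}}|\bar Z_n(s^n_j) - Q(B)(s^n_j)| + |Q(B)(s^n_j) - Q(B)(t)| < 2\e.
\end{align*}
Thus $\sup_{t\in[0,1]}|\bar Z_n(t)-Q(B)(t)|\to 0$ a.s., as required.

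The main obstacle is part (v) of Lemma \ref{lem:saw_tooth_cnvgc}, which has already been handled upstream; beyond that, the only subtlety is ensuring that all the convergences being combined hold uniformly in $t$ (rather than pointwise) so that composing and adding them preserves uniform convergence, and that the gaps between successive saw teeth shrink fast enough that the linear interpolation error is controlled by the modulus of continuity of $Q(B)$. Both are already packaged into the cited results.
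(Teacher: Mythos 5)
Your proof is correct and follows essentially the same route as the paper: it rests on item (v) of Lemma \ref{lem:saw_tooth_cnvgc} at the saw teeth, the uniform convergences from \eqref{eq:Knight_walk}, Lemma \ref{lem:quantile_cnvgc} and Corollary \ref{cor:time_changed_cnvgc}, and Corollary \ref{cor:uniform_saw_teeth} to fill in between teeth. The only cosmetic difference is that the paper introduces the intermediate processes $\bar X_n$ and $\bar Y_n$ and uses uniform continuity of the limits of the three terms of $\bar X_n$, whereas you control the interpolation error directly via the uniform continuity of $Q(B)$; the two arrangements are equivalent.
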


\begin{proof}
  Let
 \begin{align}
  \bar X_n(t) := \frac{1}{2}\bar L_n(\bar A_n(t)) + (\bar A_n(t))_+ -
  (\bar A_n(t) - \bar S_n(1))_+,\label{eq:discr_LT_q}
 \end{align}
 and let $\bar Y_n$ denote the process which equals $\bar X_n$ at the
 (rescaled) saw teeth $4^{-n}t^n_k$ and is linearly interpolated
 between these times. We prove the lemma by showing that the following
 differences of processes go to 0 uniformly as $n$ increases: (i)
 $\bar X_n - Q(B)$, (ii) $\bar Y_n - \bar X_n$, and (iii) $\bar Z_n -
 \bar Y_n$.

 The uniform vanishing of (i) follows from equations
 \eqref{eq:Knight_walk} and \eqref{eq:Knight_LT}, Lemma
 \ref{lem:quantile_cnvgc}, and Corollary
 \ref{cor:time_changed_cnvgc}. That of (iii) is equivalent to item (v)
 in Lemma \ref{lem:saw_tooth_cnvgc}. Finally, each of the three terms
 on the right in equation \eqref{eq:discr_LT_q} converge uniformly to
 uniformly continuous processes, so by Corollary
 \ref{cor:uniform_saw_teeth}, $(\bar Y_n - \bar X_n)$ a.s.\ vanishes
 uniformly as well.
\end{proof}

Before the technical work of extending this lemma to a full proof of
Theorem \ref{thm:q_cts_limit} we mention a useful bound.  For a simple
random walk bridge $(D(j),j\in [0,2n])$,
\begin{align}
 \Pr (\max_{j\in[0,2n]}|D(j)| \geq c\sqrt{2n}) \leq 2e^{-c^2}.\label{eq:max_modulus_SRWB}
\end{align}
This formula may be obtained via the reflection principle and some
approximation of binomial coefficients; we leave the details to the
reader. The Brownian analogue to this bound appears in
Billingsley\cite[p.\ 85]{Billingsley}:
\begin{align}
 \Pr (\sup_{t\in [0,1]}|B^{\rm{br}}(t)| > c) \leq 2e^{-2c^2}.\label{eq:max_modulus_BM}
\end{align}
For our purposes the `$2$' in the exponent above is unnecessary, so
we've sacrificed it to keep our discrete-time inequality
\eqref{eq:max_modulus_SRWB}.

\begin{Lem}
 Fix $\e,\d > 0$. Let $(\lambda^n_k)_{n,k\geq 0}$ be a family of random non-negative integers and $(W^n_k)_{n,k\geq 0}$ a family of walks, each having length $\lambda^n_k$ and exchangeable increments of $\pm1$. Suppose that the $W_{nk}$ are mutually independent conditional on $\{\lambda^n_k,W^n_k(\lambda^n_k)\}_{n,k\geq 0}$. And suppose further that there is some a.s.\ finite $N$ such that, for $n\geq N$:
 $$\sup_k W_{nk}(\lambda^n_k) \leq 3^n\d\rm{, and }\sup\{k : \lambda^n_k > 0\} < n2^{n+1}.$$
 Then the largest $n$ for which
 $$\sup_{j\in [0,\lambda^n_k],\ k\geq 0}|W^n_k(j) - \frac{j}{\lambda^n_k}W^n_k(\lambda^n_k)| > 2^n\e$$
 is a.s.\ finite.
\end{Lem}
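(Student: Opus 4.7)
The plan is to apply the Borel--Cantelli lemma to the events
\[B_n := \Big\{\sup_{j\in[0,\lambda^n_k],\ k\geq 0}\bigl|W^n_k(j) - \tfrac{j}{\lambda^n_k}W^n_k(\lambda^n_k)\bigr| > 2^n\e\Big\}.\]
Since $N$ is a.s.\ finite, $\{n<N\}$ holds for only finitely many $n$, so it suffices to show that $\sum_n\Pr(B_n\cap\{n\geq N\}) < \infty$.

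First, I would condition on the $\sigma$-algebra $\mathcal{G}$ generated by $\{\lambda^n_k,W^n_k(\lambda^n_k)\}_{n,k}$. By the independence and exchangeability hypotheses, given $\mathcal{G}$ each $W^n_k$ is an independent uniform random simple bridge of length $L:=\lambda^n_k$ from $0$ to $b:=W^n_k(\lambda^n_k)$. For such a bridge, the count $U_j$ of up-steps among the first $j$ increments is hypergeometric with mean $j(L+b)/(2L)$; since $W(j)=2U_j-j$ one has the exact identity $W(j)-(j/L)b = 2(U_j - \EV U_j)$. Hoeffding's inequality for hypergeometric sampling then gives $\Pr(|W(j)-(j/L)b|\geq s) \leq 2\exp(-s^2/(2L))$, and a crude union bound over $j\in[0,L]$ yields
\[\Pr\!\Bigl(\sup_j|W(j)-(j/L)b| > 2^n\e\Bigr) \leq 2L\exp\bigl(-4^n\e^2/(2L)\bigr).\]

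Union-bounding over $k$ and noting that on $\{n\geq N\}$ we have $L\leq n2^{n+1}$ and at most $n2^{n+1}$ nontrivial indices $k$,
\[\Pr(B_n\cap\{n\geq N\}) \leq (n2^{n+1})\cdot 2(n2^{n+1})\exp\!\Big(-\frac{4^n\e^2}{2\cdot n2^{n+1}}\Big) = O\!\bigl(n^2 4^n\exp(-2^n\e^2/(4n))\bigr),\]
which is summable in $n$. Borel--Cantelli then gives $\Pr(B_n\ \rm{i.o.})=0$, completing the argument. The main technical point is the sup-in-$j$ concentration of the centered bridge $W(j)-(j/L)W(L)$; the hypergeometric identification reduces this to a standard sub-Gaussian tail bound, and the huge gap between the required deviation $2^n\e$ and the typical fluctuation scale $\sqrt{L}=O(\sqrt{n}\,2^{n/2})$ means even the crudest union bound in $j$ is more than sufficient. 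In particular the endpoint bound $\sup_k W^n_k(\lambda^n_k)\leq 3^n\d$ is not actually invoked in this particular estimate (since $|b|\leq L$ holds automatically for $\pm1$ walks), though it presumably matters in the surrounding application.
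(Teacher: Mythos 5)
Your route is genuinely different from the paper's. The paper's own proof of this lemma is left unfinished (it breaks off mid-sentence), and the argument it sketches is the coupling one carried out in full inside the proof of Theorem \ref{thm:q_cts_limit}: couple each centered walk with an independent simple random walk bridge $D^n_k$, control its maximum via the reflection-principle bound \eqref{eq:max_modulus_SRWB}, and finish with Borel--Cantelli. You instead condition on $\{\lambda^n_k, W^n_k(\lambda^n_k)\}$, identify the conditional law of each walk as a uniform arrangement of a fixed multiset of $\pm1$ steps, write $W(j)-(j/L)W(L)=2(U_j-\EV U_j)$ with $U_j$ hypergeometric, and use Hoeffding's inequality for sampling without replacement plus a union bound over $j$ and $k$. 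That outline is sound and arguably cleaner: it avoids the coupling, the stopping times, and the even/odd-length bookkeeping entirely, at the cost of a harmless extra factor of $L$ from the union bound over $j$.

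There is, however, a genuine gap at the step ``on $\{n\geq N\}$ we have $L\leq n2^{n+1}$.'' The stated hypotheses bound the endpoints, $\sup_k W^n_k(\lambda^n_k)\leq 3^n\d$, and the range of nontrivial indices, $\sup\{k:\lambda^n_k>0\}<n2^{n+1}$; neither bounds the lengths $\lambda^n_k$, and your estimate $2L\exp(-4^n\e^2/(2L))$ is useless without such a bound. Indeed, with no control on the lengths the conclusion fails outright: a single uniform bridge of length $100^n$ to $0$ satisfies both stated hypotheses and fluctuates on scale $10^n\gg 2^n\e$. So a length bound is the crux, and your own remark that the endpoint hypothesis is ``not actually invoked'' should have been a warning sign: the statement as printed is a fragment, and the $3^n\d$ hypothesis is evidently intended as a bound on the lengths, matching $\max_k L_n(k)<3^n\d$ in the proof of Theorem \ref{thm:q_cts_limit}. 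Read that way your argument does close --- the exponent becomes $-4^n\e^2/(2\cdot 3^n\d)=-(4/3)^n\e^2/(2\d)$, which still beats the roughly $n\,2^n\cdot 3^n\d$ terms in the union bound, so the series is summable and Borel--Cantelli applies. But as written you have silently substituted a length bound that the hypotheses do not supply, and that substitution is exactly where the difficulty of the lemma lives; the fix is to flag the defect in the hypotheses (or use the $3^n\d$ bound as a length bound explicitly), not to pass it over.
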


\begin{proof}
 We prove this with a coupling argument. First, we observe that
 \[\sup_j\left|W^n_k(j) - \frac{j}{\lambda^n_k}W^n_k(\lambda^n_k)\right| < \sup_j\sup\{|W^n_k(j)|,\ |W^n_k(j) - W^n_k(\lambda^n_k)|\}.\]
 Next, we introduce a family of random walks $D^n_k$ which, conditional on $(\lambda^n_k)_{n,k\geq 0}$, are independent of each other and of the $W^n_k$. Let $D^n_k$ be a simple random walk bridge to 0 in the case where $\lambda^n_k$ is even, or to 1 in the case where $\lambda^n_k$ is odd.

 Now let $W^n_k$
\end{proof}

We now arrive at our main result.

\begin{Thm}\label{thm:q_cts_limit}
 As $n$ increases, $\Qbar$ a.s.\ converges uniformly to $Q(B)$.
\end{Thm}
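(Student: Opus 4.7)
My approach is to combine Lemma \ref{lem:q_lim_teeth}, which already gives uniform convergence of the piecewise linear interpolant $\bar Z_n$ to $Q(B)$, with a block-by-block control of the fluctuations of $\Qbar$ about $\bar Z_n$. By the triangle inequality,
\[
\sup_t |\Qbar(t) - Q(B)(t)| \le \sup_t |\Qbar(t) - \bar Z_n(t)| + \sup_t |\bar Z_n(t) - Q(B)(t)|,
\]
and Lemma \ref{lem:q_lim_teeth} handles the second term. Since $\bar Z_n$ agrees with $\Qbar$ at the rescaled saw teeth $4^{-n}t^n_k$ and is linear between consecutive teeth while $\Qbar$ follows the walk, the first supremum equals
\[
2^{-n}\max_k\max_{0\le j\le L_n(k)}\left|W^n_k(j) - \frac{j}{L_n(k)}W^n_k(L_n(k))\right|,
\]
where $W^n_k(j) := Q(S_n)(t^n_k+j) - Q(S_n)(t^n_k)$ is built from the increments of $S_n$ at a single level, listed in temporal order.

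Conditional on the rest of the path of $S_n$ (in particular on the crossing counts $u^n_k$ and $d^n_k$), the strong Markov property at successive visits to this level makes the arrangement of $W^n_k$ uniformly random among walks of length $L_n(k) = u^n_k + d^n_k$ with prescribed endpoint $u^n_k - d^n_k$, and renders the different $W^n_k$ conditionally independent. The centered process $W^n_k(j) - (j/L_n(k))W^n_k(L_n(k))$ is therefore distributed (up to the standard $\pm1$ adjustment for odd $L_n(k)$) as a simple random walk bridge to $0$ of length $L_n(k)$, to which the tail bound \eqref{eq:max_modulus_SRWB} applies directly.

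A Borel--Cantelli / union bound then closes the argument. By \eqref{eq:Knight_LT} and \eqref{eq:max_min_cnvgc}, there is an a.s.\ finite random constant $C$ such that, for all large $n$, $\max_k L_n(k)\le C\cdot 2^n$ and the number of nonempty blocks is at most $C\cdot 2^n$. Applying \eqref{eq:max_modulus_SRWB} with parameter $c=2^{n/6}$ to each centered block gives
\[
\Pr\!\left\{\max_j \bigl|W^n_k(j) - (j/L_n(k))W^n_k(L_n(k))\bigr|\ \ge\ \sqrt{C}\cdot 2^{2n/3}\right\}\ \le\ 2\exp\bigl(-2^{n/3}\bigr),
\]
so a union bound over the $O(2^n)$ blocks produces a tail summable in $n$. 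By Borel--Cantelli the maximum deviation is eventually at most $\sqrt{C}\cdot 2^{2n/3}$; after dividing by $2^n$ this tends to $0$, as required.

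The main obstacle is that the block lengths $L_n(k)$, the number of blocks, and the $W^n_k$ themselves are random and interdependent, so the conditional exchangeability has to be handled carefully. This is precisely the content of the preceding lemma, which I would apply with $\lambda^n_k = L_n(k)$ and $W^n_k$ as above, verifying the hypotheses via \eqref{eq:Knight_LT} and \eqref{eq:max_min_cnvgc}; the proof of Theorem \ref{thm:q_cts_limit} then reduces to invoking that lemma together with Lemma \ref{lem:q_lim_teeth}.
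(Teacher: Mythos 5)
Your overall strategy is the same as the paper's: reduce via Lemma \ref{lem:q_lim_teeth} to the fluctuations of $Q(S_n)$ between consecutive saw teeth, exploit the conditional structure of the blocks, and close with \eqref{eq:max_modulus_SRWB} and Borel--Cantelli. However, two of your distributional claims are not correct as stated. First, conditional on the saw-tooth data $Z_n$ (equivalently the crossing counts and the preterminal level), the increments within a block are \emph{not} exchangeable: by Theorems \ref{thm:array_to_walk} and \ref{thm:saw_to_walk} the conditioning forces the Bookends property, so the final increment of each block is deterministic, and only the first $L_n(k)-1$ increments form a uniformly random arrangement. This is repairable (drop the last step of each block at a cost of $\pm 1$, which is exactly why the paper defines its block walks only up to time $L_n(k)-1$), but note that it also invalidates a literal application of the auxiliary lemma you defer to, whose hypothesis is that the $W^n_k$ have exchangeable increments.

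The substantive gap is the claim that $W^n_k(j) - \frac{j}{L_n(k)}W^n_k(L_n(k))$ is distributed, up to parity, as a simple random walk bridge to $0$, so that \eqref{eq:max_modulus_SRWB} ``applies directly.'' It is not: the block is a bridge from $0$ to $\Delta^n_k \approx u^n_k - d^n_k$, and subtracting the chord produces a process with non-lattice increments whose law depends on $\Delta^n_k$ (for instance, if $d^n_k=0$ the centered process is identically zero), so it is not a simple random walk bridge and the maximal inequality does not transfer without further argument. To make the estimate rigorous one must control the random endpoints $\Delta^n_k$: the paper couples each (truncated) block with an independent simple random walk bridge translated by $\Delta^n_k$, so that \eqref{eq:max_modulus_SRWB} is applied to a genuine bridge, and separately shows $|\Delta^n_k|\le 3\cdot 2^n\e$ for all large $n$ using the already-established uniform convergence $\bar Z_n \to Q(B)$ together with the uniform continuity of $Q(B)$. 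This endpoint/chord-slope control is entirely missing from your sketch. One could instead invoke a concentration inequality for sampling without replacement (Hoeffding--Serfling type) to bound deviations from the chord irrespective of the endpoint, but that is a different estimate from \eqref{eq:max_modulus_SRWB} and would need to be proved or cited. As written, the key probabilistic step rests on a false distributional identity, so the proof has a genuine gap, although the architecture matches the paper's and can be repaired along the paper's lines.
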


\begin{proof}
 Let $Z_n$ and $\bar Z_n$ be as in Lemma \ref{lem:q_lim_teeth}. After
 that lemma it suffices to prove that $(\Qbar - \bar Z_n)$ vanishes
 uniformly as $n$ increases. By definition, this difference equals 0
 at the saw teeth. Moreover, we deduce from Theorems
 \ref{thm:array_to_walk} and \ref{thm:saw_to_walk} that conditional on
 $Z_n$, the walk $Q(S_n)$ is a simple random walk conditioned to equal
 $Z_n$ at the saw teeth $t^n_k$ and with some constraints, coming from
 the Bookends property, on its $(t^n_k)^{\rm{th}}$ steps.

 We must bound the fluctuations of $Q(S_n)$ in between the saw
 teeth. Heuristic arguments suggest that these ought to have size on
 the order of $2^{n/2}$; we need only show that they grow uniformly
 slower than $2^n$. We  prove this via a Borel-Cantelli
 argument. There are many ways to bound the relevant probabilities of
 ``bad behavior;'' we proceed with a coupling argument.

 For each $(n,k)$ for which $t^n_k$ is defined -- i.e.\ with $k\in
 [\min S_n,\ \max S_n]$ -- we define several processes and stopping
 times. These objects  appear illustrated together in figure
 \ref{fig:coupling}. First, for $j\in [0,L_n(k)-1]$ we define
 \begin{align*}
  \hat W^n_k(j) &:= Q(S_n)(t^n_k + j) - Q(S_n)(t^n_k)\rm{ and}\\
  \check W^n_k(j) &:= Q(S_n)(t^n_k + j) - Q(S_n)(t^n_{k+1} - 1).
 \end{align*}
 Recall from equation \eqref{eq:LT_tooth_gaps} that $L_n(k)$ is the
 difference between consecutive saw teeth. We only define these walks
 up to time $L^n_k - 1$ so as to sidestep issues around constrained
 final increments and the bookends property. Observe that
 \begin{align*}
  \max_{j\in[t^n_k,t^n_{k+1}]}|Q(S_n)(j) - Z_n(j)| \leq 1 + \max_{j\in [0,L_n(k)-1]}\{ |\hat W^n_k(j)|,|\check W^n_k(j)|\},
 \end{align*}
 so it suffices to bound the fluctuations of the $\hat W$ and
 $\check W$.

 We further define
 \begin{align*}
  \Delta^n_k := Q(S_n)(t^n_{k+1}-1) - Q(S_n)(t^n_k)
 \end{align*}
 Observe that
 \begin{align}\label{eq:Wnk_boundary}
  \begin{array}{r@{\;\;\rm{and}\;\;}l}
     \hat W^n_k(0) = 0             & \hat W^n_k(L_n(k) - 1) = \Delta^n_k\rm{,\ whereas}\\
     \check W^n_k(0) = -\Delta^n_k & \check W^n_k(L_n(k) - 1) = 0.
  \end{array}
 \end{align}

 If $L_n(k)$ is an odd number then we may define a simple random walk
 bridge $D^n_k$ that has random length $L_n(k)-1$ but is otherwise
 independent of $S_n$ (we enlarge our probability space as necessary
 to accommodate these processes). In the next paragraph we deal with
 the case where $L_n(k)$ is even. Let
 \begin{align*}
  \hat T^n_k &:= \min\{ j : D^n_k(j) + \Delta^n_k = \hat W^n_k(j)\}\rm{\ and}\\
  \check T^n_k &:= \max\{ j : D^n_k(j) - \Delta^n_k = \check W^n_k(j)\}.
 \end{align*}
 These stopping times must be finite, thanks to the values of $\hat W$
 and $\check W$ observed in \eqref{eq:Wnk_boundary}. Finally we define
 the coupled walks.
 \begin{align}
  \hat D^n_k(j) &= \left\{\begin{array}{ll}
    D^n_k(j) + \Delta^n_k    &\rm{for\ }j \in [0,\hat T^n_k]\\
    \hat W^n_k(j)    & \rm{for\ }j \in (\hat T^n_k,L_n(k)-1].
  \end{array}\right.\\
  \check D^n_k(j) &= \left\{\begin{array}{ll}
    \check W^n_k(j)    & \rm{for\ }j \in [0,\check T^n_k]\\
    D^n_k(j) - \Delta^n_k    &\rm{for\ }j \in (\check T^n_k,L_n(k)-1].
  \end{array}\right.
 \end{align}
 Conditional on $L_n(k)$, the $\hat D^n_k$ and $\check D^n_k$ remain
 simple random walk bridges, albeit vertically translated. These are
 illustrated in Figure \ref{fig:coupling}.

 In the case where $L_n(k)$ is even rather than odd, we modify the
 above definitions by making $D^n_k$ a bridge to $-1$ if $\Delta^n_k >
 0$ (or $1$ respectively if $\Delta^n_k < 0$) instead of 0 and
 including appropriate `$+1$'s (respectively `$-1$'s) into the
 definitions of $\hat T^n_k$ and $\hat D^n_k$ so that the final value
 of $D^n_k + \Delta^n_k + 1$ (resp.\ $-1$) aligns with that of $\hat
 W^n_k$.

 \begin{figure}\centering
  \input{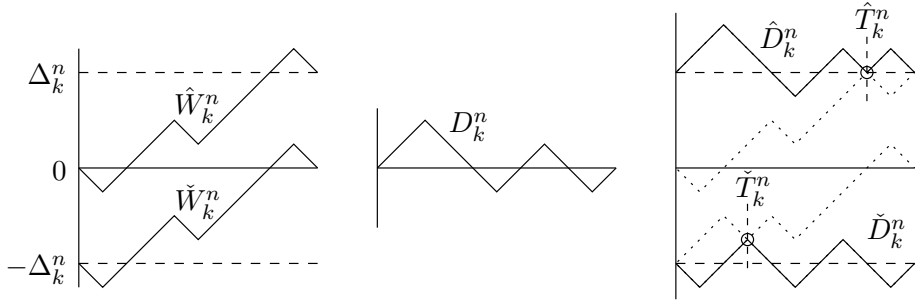}
  \caption{Objects from the coupling argument.}\label{fig:coupling}
 \end{figure}

 Fix $\e>0$. We may bound the extrema of $\hat W^n_k$ and $\check
 W^n_k$ by bounding the extrema of $\hat D^n_k$ and $\check D^n_k$. In
 particular, we have the following event inclusions.
 \begin{align}
  &\{\max_j\{|\hat W^n_k(j)|,\ |\check W^n_k(j)|\} \geq 2^{n+1}\e\}\notag\\
    &\subseteq \{\max_j \{|\hat D^n_k(j)|,\ |\check D^n_k(j)|\} \geq 2^{n+1}\e\}\notag\\
    &\subseteq \{|\Delta^n_k| + 1 \geq 2^{n}\e\} \cup \{\max_j |D^n_k(j)| \geq 2^{n}\e\}.\label{eq:large_modulus_event}
 \end{align}
 First we  use previous results from this section to prove that
 a.s.\ only finitely many of the $\Delta^n_k$ are large. Then we make
 a Borel-Cantelli argument to do the same for the $\max_j |D^n_k(j)|$.

 By the continuity of $Q(B)$, there is a.s.\ some $\d\in (0,\e^2)$
 sufficiently small so that
 $$\max_{|t-s|<\d} |Q(B)(t) - Q(B)(s)| < \e.$$ And there is a.s.\ some
 $N$ sufficiently large so that for $n\geq N$:
 \begin{align*}
  \sup_j |S_n(j)| &< n2^n,\\
  \max_k L_n(k) &< 3^n\delta\rm{, and}\\
  \sup_t |\bar Z_n(t) - Q(B)(t)| &< \e.
 \end{align*}
 The first two of these bounds follow from the continuity of $\ell$
 and equations \eqref{eq:Knight_walk} and \eqref{eq:Knight_LT}; the
 third follows from Lemma \ref{lem:q_lim_teeth}. The second and third
 of these imply that for $n\geq N$,
 \begin{align*}
  |\Delta^n_k| &\leq |Z_n(t^n_{k+1}) - 2^nQ(B)(4^{-n}t^n_{k+1})| + 2^n|Q(B)(4^{-n}t^n_{k+1}) - Q(B)(4^{-n}t^n_k)|\\
  	&\eqspace + |2^nQ(B)(4^{-n}t^n_k) - Z_n(t^n_k)|\\
  	&\leq 3\cdot 2^n\e.
 \end{align*}
 So, folding constants into $\e$, there is a.s.\ some largest $n$ for
 which any of the $|\Delta^n_k|$ exceed $2^n\e$.

 We proceed to our Borel-Cantelli argument to bound fluctuations in the $D^n_k$.
 \begin{align*}
  &\sum_{n}\sum_k \Pr \{\max_{j}|D^n_k(j)| > 2^n\e;\ n>N\}\\
  	&\leq \sum_{n} 2^{n+1} n\max_{|k|< 2^{n}n} \Pr \{\max_j|D^n_k(j)| > 2^n\e;\ n>N\}\\
  	&\leq \sum_{n} 2^{n+1} n\max_{l\leq [3^n\d]} \Pr\{\max_j|D^n_0(j)| > 2^n\e\ |\ L_n(0) = l\}\\
  	&\leq \sum_{n} 2^{n+2} n e^{-(\frac43)^n} < \infty.
 \end{align*}
 The last line above follows from \eqref{eq:max_modulus_SRWB}. We
 conclude from the Borel-Cantelli Lemma that a.s.\ only finitely many
 of the $D^n_k$ exceed $2^n\e$ in maximum modulus. So by the event
 inequality \eqref{eq:large_modulus_event}, a.s.\ only finitely many
 of the $W^n_k$ exceed $2^{n+1}\e$ in maximum modulus.
\end{proof}

Our main result in the continuous setting, Theorem
\ref{thm:gen_Jeulin} below, now emerges as a corollary.

\begin{Def}\label{def:Vervaat_cts}
 Let $\tau_m$ denote the time of the (first) arrival of $(X(t),\ t\in
 [0,1])$ at its minimum. The \emph{Vervaat transform} maps $X$ to the
 process $V(X)$ given by
 \begin{align}
  V(X)(t) := \left\{\begin{array}{ll}
          X(\tau_m + t) - X(\tau_m)	& \rm{for\ }t\in [0, 1-\tau_m)\\
          X(\tau_m + t - 1) + X(1) - X(\tau_m)	& \rm{for\ }t\in [1-\tau_m, 1].
         \end{array}\right.
 \end{align}
\end{Def}

This transform should be thought of as partitioning the increments of
$X$ into two segments, prior and subsequent to $\tau_m$, and swapping
the order of these segments.

\begin{Thm}\label{thm:Vervaat}
  For $U$ an independent \emph{Uniform}$[0,1]$ random variable, we have
  \begin{align}
  (V(\Bbr)(t),\ t\in [0,1]) &\stackrel{d}{=} (\Bex(t),\ t\in [0,1]). &\rm{(Vervaat, 1979\cite{Vervaat79})}\\
  (\tau_m,\ (V(\Bbr)(t),\ t\in [0,1])) &\stackrel{d}{=} (U,\ (\Bex(t),\ t\in [0,1])). &\rm{(Biane, 1986\cite{Biane86})}
 \end{align}
\end{Thm}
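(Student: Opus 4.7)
The plan is to derive both statements as consequences of the discrete Vervaat bijection, Theorem \ref{thm:Vervaat_wk}, combined with the strong convergence of embedded simple walks to Brownian motion from \eqref{eq:Knight_walk}. Since $V(\Bbr) \stackrel{d}{=} \Bex$ is the marginal of the joint statement, it suffices to establish Biane's refined form.

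My first step would be to apply Theorem \ref{thm:Vervaat_wk} to a uniform random simple walk bridge $W_n$ of length $2n$: the pair $(V(W_n),\, 2n - \tau_V(W_n))$ is uniformly distributed over the Vervaat pairs $(v,k)$ in which $v$ is a Dyck path of length $2n$. Unpacking the definition, the marginal of $V(W_n)$ over Dyck paths is weighted by $2n - T_v$, where $T_v$ is the last zero of $v$ before time $2n$, and conditional on $V(W_n)=v$ the index $2n - \tau_V(W_n)$ is uniform on $\{T_v+1,\ldots,2n\}$. My second step would be to transfer to the Brownian setting via Knight's embedding: $W_n/\sqrt{2n}$ converges a.s.\ uniformly to $\Bbr$, and because $\Bbr$ a.s.\ attains its minimum at a unique interior time $\tau_m \in (0,1)$, the Vervaat transform is continuous at $\Bbr$, so the rescaled $V(W_n)$ converges a.s.\ to $V(\Bbr)$ and $\tau_V(W_n)/(2n)$ converges a.s.\ to $\tau_m$. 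My third step would be to establish the cyclic invariance of $\Bbr$ directly at the level of Gaussian finite-dimensional distributions: for every $s \in [0,1]$ the process $t \mapsto \Bbr((t+s)\bmod 1) - \Bbr(s)$ is itself a standard Brownian bridge. Combined with the a.s.\ uniqueness of the minimum, this forces $\tau_m$ to be uniform on $[0,1]$ and independent of $V(\Bbr)$. My fourth step would be to identify the marginal law of $V(\Bbr)$ as $\Bex$: in the limit $T_v/(2n) \to 0$ almost surely (since Brownian excursion is strictly positive on $(0,1)$), so the bias factor $2n - T_v$ is asymptotically deterministic under any fixed scaling, and the limiting marginal matches the scaling limit of Dyck paths, which is $\Bex$.

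The hardest step is the fourth. Cyclic invariance in Step 3 delivers the uniformity of $\tau_m$ and its independence from $V(\Bbr)$ cheaply, but it does not pin down the marginal of $V(\Bbr)$. Controlling the discrete bias factor $2n - T_v$ in the scaling limit requires care, and matching its limit with $\Bex$ is the technical heart of the argument; alternatively, one can bypass this combinatorial route by identifying $V(\Bbr)$ directly as a Bessel$(3)$-bridge via It\^o's characterization of $\Bex$, at the cost of invoking separate analytic machinery.
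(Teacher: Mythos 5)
The paper never proves Theorem \ref{thm:Vervaat}: it is quoted from Vervaat (1979) and Biane (1986), and in the proof of Theorem \ref{thm:gen_Jeulin} the paper simply invokes Vervaat's convergence result. So there is no internal proof to compare against; judging your argument on its own terms, Steps 1 and 3 are sound. The discrete bookkeeping is right: by Theorem \ref{thm:Vervaat_wk}, the law of $V(W_n)$ on Dyck paths is the uniform law size-biased by the final-excursion length $2n-T_v$, with the helper uniform on $\{T_v+1,\dots,2n\}$ given the path; and the cyclic-invariance argument (shift invariance of $\Bbr$ modulo $1$, uniqueness of the minimum, and $V(\theta_s\Bbr)=V(\Bbr)$ with $\tau_m(\theta_s\Bbr)=\tau_m-s \bmod 1$) correctly yields that $\tau_m$ is Uniform$[0,1]$ and independent of $V(\Bbr)$, reducing Biane's refinement to Vervaat's marginal identity. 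Step 2 is misstated: the embedding \eqref{eq:Knight_walk} produces walks embedded in Brownian \emph{motion}, not in a bridge, so no a.s.\ coupling of uniform walk bridges $W_n$ with $\Bbr$ is available from the paper; but only convergence in distribution (Donsker for bridges) plus continuity of $V$ at paths with a unique interior minimum is needed, so this is repairable sloppiness.

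The genuine gap is Step 4, which is where the entire content of Vervaat's theorem sits, and your argument for it is circular. The law of $V(W_n)$ is the uniform Dyck law biased by $2n-T_v$; for the bias to ``wash out'' you would need $(2n-T_v)/2n$ to be asymptotically deterministic under the \emph{uniform} Dyck law, and it is not: under that law the final excursion has a nondegenerate scaling behaviour, and in fact has bounded length with asymptotic probability $\sum_{m\ge 1}C_{m-1}4^{-m}=\tfrac12$ (Catalan numbers $C_m$), so the biasing genuinely changes the measure. Nor can you justify $T_v/2n\to 0$ under the biased law by saying that Brownian excursion is strictly positive on $(0,1)$ --- that presupposes the limit is the excursion, which is exactly what is being proved; and $v\mapsto T_v$ is not a continuous functional under rescaling, so nothing transfers automatically from the putative limit. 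A correct completion along your lines exists but must be supplied: for instance, a Catalan-asymptotics estimate showing that under the biased law $T_v/2n\to 0$ in probability; then, conditionally on $T_v=2j$, the final segment is a uniform strictly positive excursion of length $2n-2j$, which converges to $\Bex$ by Kaigh's invariance principle, while the initial Dyck segment of length $2j=o(n)$ has supremum $o(\sqrt{n})$ and vanishes after rescaling. Alternatively one can identify $V(\Bbr)$ as a Bessel$(3)$ bridge via the decomposition of the bridge at its minimum, as you mention --- but in your write-up that route is asserted, not carried out. As it stands, Step 4 restates the theorem rather than proving it.
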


We demonstrated in section \ref{sec:Vervaat} that for simple random
walks, the discrete-time analogue of the Vervaat transform of the walk
has the same distribution as the quantile transform. Now we have shown
that $Q(B)$ arises as an a.s.\ limit of the quantile transforms of
certain simple random walks.

\begin{Thm}\label{thm:gen_Jeulin}
 We have $(Q(B), B(1)) \stackrel{d}{=} (V(B), B(1))$.
\end{Thm}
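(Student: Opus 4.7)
The plan is to transfer the discrete identity of Corollary \ref{cor:Vervaat_count} to the continuous setting via Theorem \ref{thm:q_cts_limit} together with an analogous uniform-convergence result for the Vervaat transform.

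First, because $S_n$ is a uniform random walk of length $4^n$, Corollary \ref{cor:Vervaat_count} gives $Q(S_n) \stackrel{d}{=} V(S_n)$. Both transforms fix the endpoint, $Q(w)(n) = V(w)(n) = w(n)$, so this upgrades to the joint identity $(Q(S_n), S_n(4^n)) \stackrel{d}{=} (V(S_n), S_n(4^n))$. Rescaling space by $2^{-n}$ and time by $4^{-n}$, and writing $\bar V_n(t) := 2^{-n}V(S_n)(\lfloor 4^n t\rfloor)$, I obtain $(\Qbar, \bar S_n(1)) \stackrel{d}{=} (\bar V_n, \bar S_n(1))$ for every $n$.

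Next I would pass to the $n \to \infty$ limit. The left-hand side converges to $(Q(B), B(1))$ a.s.\ uniformly by Theorem \ref{thm:q_cts_limit}, together with the a.s.\ convergence $\bar S_n(1) \to B(1)$ that follows from $\tau_n(4^n) \to 1$ a.s.\ and the continuity of $B$. For the right-hand side, I would prove that $\bar V_n \to V(B)$ a.s.\ uniformly. Brownian motion on $[0,1]$ a.s.\ attains its minimum at a unique interior point $\tau_m$; combined with the a.s.\ uniform convergence $\bar S_n \to B$ in \eqref{eq:Knight_walk}, this forces $\tau_V(S_n)/4^n \to \tau_m$ a.s.\ by the standard argmin-stability argument for continuous functions with a unique minimizer. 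Plugging this into Definition \ref{def:Vervaat_cts} and invoking the uniform continuity of $B$, each of the two glued segments of $\bar V_n$ converges uniformly to the corresponding segment of $V(B)$. With a.s.\ uniform convergence on both sides, the per-$n$ identity in distribution passes to the limit to give $(Q(B), B(1)) \stackrel{d}{=} (V(B), B(1))$.

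The main obstacle is controlling the seam of $\bar V_n$ at the random time $1 - \tau_V(S_n)/4^n$. This reduces to the argmin convergence, which relies on the a.s.\ uniqueness of the Brownian minimum on $[0,1]$, a classical fact. Once that is established, the pre- and post-$\tau_V$ pieces of $\bar V_n$ are translated copies of pieces of $\bar S_n$, and their uniform convergence to the corresponding pieces of $V(B)$ follows routinely from \eqref{eq:Knight_walk} and continuity of $B$. A minor bookkeeping point is that $\tau_V$ is defined for walks indexed by $[0,4^n]$ rather than the scaled time interval $[0,1]$, but the rescaling is transparent.
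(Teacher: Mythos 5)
Your proof is correct and follows essentially the same route as the paper: combine the discrete identity from Corollary \ref{cor:Vervaat_count} with Theorem \ref{thm:q_cts_limit} and a convergence result for the rescaled Vervaat transforms of the embedded walks $S_n$. The only difference is that where the paper simply cites Vervaat's theorem for the convergence in distribution of $\overline{V(S_n)}$ to $V(B)$, you supply a direct a.s.\ uniform-convergence argument via uniqueness of the Brownian argmin; this is valid (and you also track the joint law with $B(1)$ slightly more explicitly than the paper's proof does).
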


\begin{proof}
 Let $\overline{V(S_n)}(t) := 2^{-n}V(S_n)([4^nt])$. Vervaat proved
 that $\overline{V(S_n)}$ converges in distribution to $V(B)$. By
 Corollary \ref{cor:Vervaat_count} we have $\Qbar \stackrel{d}{=}
 \overline{V(S_n)}$, and by Theorem \ref{thm:q_cts_limit} the $\Qbar$
 converge in distribution to $Q(B)$. Thus $Q(B)\stackrel{d}{=} V(B)$
 as desired.
\end{proof}

We may use properties of Brownian bridge to give a unique family of
distributions for $Q(B)$ and $V(B)$ conditional on $B(1) = a$ which is
weakly continuous in $a$. In the case where $B(1) = 0$, Theorem
\ref{thm:gen_Jeulin} specializes to the following.

\begin{Thm}[Jeulin, 1985\cite{Jeulin85}]\label{thm:Jeulin}
 If $\ell$ and $A$ denote the local time and the quantile function of
 occupation measure, respectively, of a Brownian bridge or excursion,
 then
 \begin{align}
  \left(\frac 12\ell(A(t)),\ t\in [0,1]\right) \stackrel{d}{=} (\Bex(t),\ t\in [0,1]).
 \end{align}
\end{Thm}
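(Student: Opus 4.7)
The plan is to deduce Jeulin's theorem from Theorem \ref{thm:gen_Jeulin} combined with Vervaat's classical identity $V(\Bbr) \stackrel{d}{=} \Bex$ from Theorem \ref{thm:Vervaat}. First I would take the identity $(Q(B), B(1)) \stackrel{d}{=} (V(B), B(1))$ and condition on $B(1) = 0$, using the weakly continuous family of conditional distributions referenced in the paragraph preceding the theorem. Since $\Bbr$ is by definition $B$ conditioned on $B(1) = 0$, this yields $Q(\Bbr) \stackrel{d}{=} V(\Bbr)$, and then Vervaat's theorem gives $Q(\Bbr) \stackrel{d}{=} \Bex$.

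For the bridge case of the stated identity, I would then invoke Definition \ref{def:q_cts}: when $X(1) = 0$, the boundary correction $(A(t))_+ - (A(t) - X(1))_+$ vanishes identically, so $Q(\Bbr)(t) = \tfrac{1}{2}\ell(A(t))$. Substituting into the distributional identity above gives the asserted equality $(\tfrac{1}{2}\ell(A(t)),\ t\in[0,1]) \stackrel{d}{=} (\Bex(t),\ t\in[0,1])$.

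For the excursion case, I would observe that the Vervaat transform merely cyclically rearranges increments of the path, while both the local time profile $\ell$ and the quantile function of occupation measure $A$ depend only on the occupation measure $a \mapsto \mathrm{Leb}\{t : X(t) \leq a\}$, which is invariant under such rearrangement. Consequently $Q(V(\Bbr)) = Q(\Bbr)$ pathwise (the bridge form of $Q$ applies on both sides because $V(\Bbr)(1) = \Bbr(1) = 0$). Since $V(\Bbr) \stackrel{d}{=} \Bex$, this gives $\tfrac{1}{2}\ell^{\Bex}\circ A^{\Bex} \stackrel{d}{=} \tfrac{1}{2}\ell^{\Bbr}\circ A^{\Bbr} \stackrel{d}{=} \Bex$, completing the excursion case.

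The main obstacle is the conditioning step, since $\{B(1)=0\}$ is a null event and a regular conditional distribution of the process-valued quantity $(Q(B), V(B))$ must be constructed. That work is precisely what is carried out in the paragraph immediately preceding Theorem \ref{thm:Jeulin}, which guarantees a weakly continuous version in $a$ of the conditional law of $(Q(B), V(B))$ given $B(1) = a$; once that continuity is in hand the disintegration at $a = 0$ is routine. The invariance of occupation measure under cyclic shift, used in the excursion step, is essentially a Fubini-type observation and raises no serious difficulty.
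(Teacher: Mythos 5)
Your proposal is correct and follows essentially the same route as the paper: Theorem \ref{thm:gen_Jeulin} is disintegrated at $B(1)=0$ via the weakly continuous family of conditional laws, the boundary terms in Definition \ref{def:q_cts} vanish for a bridge, and Vervaat's identity $V(\Bbr)\stackrel{d}{=}\Bex$ finishes the bridge case. Your treatment of the excursion case actually goes further than the paper, which simply cites Biane--Yor; one small imprecision there is that the occupation measure is \emph{not} invariant under the Vervaat rearrangement --- since $V(\Bbr)(t)=\Bbr(\tau_m+t)-\Bbr(\tau_m)$ the occupation measure is translated by $-\Bbr(\tau_m)$, so $\ell^{V(\Bbr)}(a)=\ell^{\Bbr}(a+\Bbr(\tau_m))$ and $A^{V(\Bbr)}(t)=A^{\Bbr}(t)-\Bbr(\tau_m)$ --- but the shift cancels in the composition $\ell\circ A$, so your conclusion $Q(V(\Bbr))=Q(\Bbr)$ pathwise, and hence the excursion case, stands.
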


This assertion for Brownian excursions appeared in Jeulin's monograph
\cite[p.\ 264]{Jeulin85} but without a clear, explicit proof; a proof
appears in \cite[p.\ 49]{BianYor87}.

\section{Further connections}
\label{sec:background}

Similar transformations have been widely studied in the
literature. For example, let $x_1,x_2,\cdots$ be a sequence of real
numbers, and define $S(0) = 0$ and
\begin{align}
 S(n) := \sum_{j=1}^n x_j.\label{eq:walk_from_increments}
\end{align}
So the $x_i$ are the increments of the process $S$. Fix some level $l
\geq 0$. We define $S^-(n)$ (and respectively $S^+(n)$) to be the sum
of the first $n$ increments of $S$ which originate at or below
(resp.\ strictly above) the value $l$. That is, an increment $x_i$ of
$S$ is an increment of $S^-$ only if $S(i-1) \leq l$. This is
illustrated in Figure \ref{fig:BCY_transform}; in that example, the
increments $x_3,\ x_7,\ x_8$ and $x_9$ contribute to $S^+(4)$. For the
sake of brevity we omit a more formal definition, which may be found
in \cite{BertChauYor97}. We  call the map $S \mapsto S^-$ the
\emph{BCY transform} (with parameter $l$).

\begin{figure}[htb]\centering
 \input{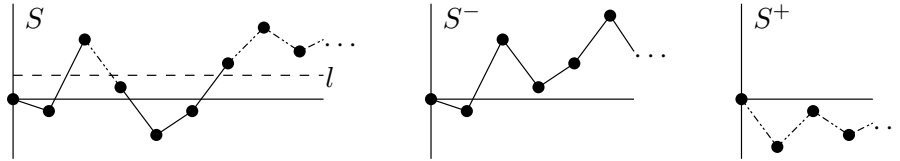}
 \caption{The BCY transform.}\label{fig:BCY_transform}
\end{figure}

The BCY transform resembles the quantile transform in that it sums
increments below some level. But whereas the quantile transform may
only be applied to a walk which has finite length or is upwardly
transient, the BCY transform applies equally well to walk.

There are two big differences between the BCY and quantile
transforms. Firstly, in the case of the BCY transform, the process
$S^-$ comprises all those increments which appear in $S$ below some
previously fixed level $l$; whereas in the case of the quantile
transform, $Q(S)(j)$ comprises (roughly) those increments which appear
in $S$ below a variable level which increases with $j$. Secondly, the
increments of $S^-$ appear in the same order in which they appeared in
$S$, whereas the increments of $Q(S)$ appear in order of the value at
which they appear in $S$.

If we suppose that the $x_i$ are i.i.d.\ random variables then by the
strong Markov property, $S^-$ has the same distribution as $S$
\cite[Lemma 2]{BertChauYor97}. But this is not the case for $Q(S)$;
Theorem \ref{thm:q_bijection} indicates that for $S$ a simple random
walk, $Q(S)$  tend to rise at early times and fall later.

As a further example, the path transformation studied by
Chaumont\cite{Chaumont99} resembles the concatenation of $S^-$
followed by $S^+$, but with some delicate changes. To define it, we
require different notation than that introduced earlier. Recall that
the final increment of the walk $w$ has no bearing on $\Qp_w$. We
require a version of the permutation which does account for this
increment. We draw from the notation of Port\cite{Port63} and
Chaumont\cite{Chaumont99}; this notation is used only in this section
and nowhere else in the paper.

\begin{Def}
 Let the increment sequence $(x_i)_{i=1}^{\infty}$ and the process $S$
 be as above. Let $(S^n(j),\ j\in [0,n])$ denote the restriction of
 $S$ to its $n$ initial increments. For $k\in [0,n]$ we define
 $M^S_{nk}$ and $L^S_{nk}$ so that
 $$(M^S_{n0},L^S_{n0});\ (M^S_{n1},L^S_{n1});\ \cdots;\ (M^S_{nn},L^S_{nn})$$
 is the increasing lexicographic reordering of the sequence
 $$(S(0),0);\ (S(1),1);\ \cdots;\ (S(n),n)$$
 We call the permutation
 $$(0,1,\cdots,\ n) \mapsto (L^S_{n0},\ \cdots,\ L^S_{nn})$$ the
 \emph{quantile permutation of vertices} of $S^n$ (whereas $\Qp_{S^n}$
 might be thought of as a quantile permutation \emph{of
   increments}). We define
 \begin{align*}
  R^S_{nk} &:= \#\{i\leq L^S_{nk}\ :\ S(i)\leq M^S_{nk}\}.
 \end{align*}
\end{Def}
We suppress the superscript when it is clear from context which
process is being discussed.

Both the BCY and Chaumont transforms are motivated by the following
theorem.

\begin{Thm}[Wendel, 1960\cite{Wendel60}; Port, 1963\cite{Port63}; Chaumont, 1999\cite{Chaumont99}]\label{thm:WendelPort}
 Suppose that $x_1,\ \cdots,\ x_n$ are exchangeable real-valued random
 variables, and let $S$ denote the process with these increments. Fix
 $k\in [0,n]$ and let $S'$ denote the process
 $$S'(j) = S(k+j) - S(k)\ \rm{for\ }j\in [0,n-k].$$
 Then
 \begin{align}
  \left(\begin{array}{c}
   S(n)\\
   M^S_{nk}\\
   L^S_{nk}\\
   R^S_{nk}
  \end{array}\right) \stackrel{d}{=}
  \left(\begin{array}{c}
   S(k) + S'(n-k)\\
   M^S_{kk} + M^{S'}_{n-k,0}\\
   L^S_{kk} + L^{S'}_{n-k,0}\\
   L^S_{kk}
  \end{array}\right).\label{eq:WendelPort}
 \end{align}
\end{Thm}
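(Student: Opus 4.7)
The plan is to prove (\ref{eq:WendelPort}) via a path-transformation argument leveraging the exchangeability of $(x_1, \ldots, x_n)$. The first coordinate is a pathwise identity, $S(k) + S'(n-k) = S(n)$, so the substance of the theorem lies entirely in the joint law of the remaining three coordinates.

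The strategy is to construct a measurable bijection $\Phi$ on walks of length $n$ such that: (i) $\Phi$ acts on the increment sequence by a (path-dependent) permutation, so by exchangeability of $(x_1, \ldots, x_n)$ we have $\Phi(S) \stackrel{d}{=} S$; and (ii) the left-hand quadruple evaluated at $\Phi(S)$ equals, pathwise, the right-hand quadruple evaluated at $S$. If such a $\Phi$ exists, the equality in distribution follows immediately by a change of variables. The target shape of $\Phi(S)$ is suggested by the algebraic decomposition on the right-hand side: the transformed walk should ascend to a peak of height $M^S_{kk}$ at time $L^S_{kk}$, descend to the value $M^S_{kk} + M^{S'}_{n-k, 0}$ at time $L^S_{kk} + L^{S'}_{n-k, 0}$ (where the $(k{+}1)$-st lex order statistic is attained), and then continue to the terminal value $S(n)$. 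A Vervaat-style cyclic rotation within each of the two halves $S|_{[0,k]}$ and $S'|_{[0,n-k]}$, aligning the last-maximum of the prefix with the split point and the first-minimum of the suffix with its left endpoint, provides a first-order candidate for $\Phi$.

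The main obstacle is the pathwise verification that $\Phi$ so constructed actually produces the claimed RHS quantities, with the additional subtlety that a naive Vervaat rotation of the prefix does not preserve its maximum \emph{value} (only its attainment-time structure), so the correct $\Phi$ must interleave the two halves in a more delicate way. A careful case analysis is further required to handle ties in the $S$-values (which complicate the lex ordering defining $M^S_{nk}$, $L^S_{nk}$, and $R^S_{nk}$) and to treat separately the sign of $M^{S'}_{n-k,0}$ (which determines whether the overall $(k{+}1)$-st lex order statistic lives inside the rearranged prefix or inside the rearranged suffix of $\Phi(S)$). A standard continuity/density reduction to the case of distinct $S$-values eliminates the tie complications, after which the explicit construction of $\Phi$ and the pathwise match-up of all four coordinates are carried out as in Chaumont\cite{Chaumont99}, building on the earlier work of Wendel\cite{Wendel60} and Port\cite{Port63}.
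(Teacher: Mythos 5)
This statement is quoted background in the paper (section \ref{sec:background}); the paper gives no proof of Theorem \ref{thm:WendelPort}, attributing the first two coordinates to Wendel, the third to Port, and the fourth, together with the path-transformation proof, to Chaumont, and then describing Chaumont's transform: the increments of $S$ are sorted into four blocks $I_1,\ldots,I_4$ according to whether their index is at most or greater than $L^S_{nk}$ and whether they start (resp.\ end) at or below, or above, the level $M^S_{nk}$, each block kept in increasing index order; Chaumont's result is that the right-hand side of \eqref{eq:WendelPort} evaluated on $\tilde S$ equals the left-hand side evaluated on $S$ \emph{pathwise}, and since $\tilde S\stackrel{d}{=}S$ under exchangeability the distributional identity follows. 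Your proposal correctly identifies this general strategy (a measure-preserving rearrangement of the increments turning the identity in law into a pathwise identity), but it contains a genuine gap: the transformation $\Phi$ is never constructed. Your concrete candidate, a Vervaat-style cyclic rotation within each of the two time-halves $S|_{[0,k]}$ and $S'|_{[0,n-k]}$, is not the right map --- as you yourself observe, it does not control the maximum value, and the correct transform is not a rotation of the halves at all but the four-block sort just described, which separates increments by their position relative to the level $M^S_{nk}$ rather than by the time-split at $k$. Concluding with ``carried out as in Chaumont'' outsources precisely the step that constitutes the proof, namely the construction of the rearrangement and the verification that all four coordinates match pathwise.

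A second, smaller gap is the proposed reduction to distinct $S$-values by a ``continuity/density'' argument. The theorem is stated for arbitrary exchangeable real-valued increments, which may be atomic --- indeed the relevant application in this paper is to simple walks with $\pm1$ steps, where ties among the values $S(0),\ldots,S(n)$ are pervasive --- and the quantities $L^S_{nk}$ and $R^S_{nk}$ are integer-valued functionals defined through the lexicographic tie-breaking by time index, so they are not continuous under perturbation of the increments and one cannot simply pass a limit from the tie-free case without an argument that the tie-breaking convention is respected. The classical treatments (Wendel, Port, Chaumont) handle ties directly in the combinatorics; if you want a self-contained proof you should either do the same or give a genuine justification of the perturbation step (for instance, a drift perturbation that reproduces the index tie-breaking), neither of which appears in the proposal.
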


The identity in the first two coordinates in equation
\eqref{eq:WendelPort} is due to Wendel; Port made the (satisfying)
extension of the result to the third coordinate. For more discussion
of related results such as Sparre Andersen's
Theorem\cite{Andersen53_1,Andersen53_2} and Spitzer's Combinatorial
Lemma\cite{Spitzer55}, see Port\cite{Port63}. Port's paper also gives,
on page 140, a combinatorial formula for the probability distribution
of $\Qp_S(j)$ given the distributions of the increments of $S$.

Chaumont made the suggestive extension of \eqref{eq:WendelPort} to the
fourth coordinate and presented the first path-transformation-based
proof Port's result. Let the $x_i$ and $S$ be as in Theorem
\ref{thm:WendelPort} and fix some $k\in [0,n]$. Chaumont's
transformation works by partitioning the increments of $S$ into four
blocks.
\begin{align*}
 I_1 &:= \{i\in [1,L_{nk}]\ :\ S(i-1)\leq M_{nk}\},\\
 I_2 &:= \{i\in (L_{nk},n]\ :\ S(i) < M_{nk}\},\\
 I_3 &:= \{i\in [1,L_{nk}]\ :\ S(i-1) > M_{nk}\},\rm{\ and}\\
 I_4 &:= \{i\in (L_{nk},n]\ :\ S(i)\geq M_{nk}\}
\end{align*}
The \emph{Chaumont transform} sends $S$ to the process $\tilde S$
whose increments are the $x_i$ with $i\in I_1$, followed by those with
$i\in I_2$, then $I_3$, and finally $I_4$, with the increments within
each block arranged in order of increasing index. Details may be found
in \cite[p.\ 3-4]{Chaumont99}. This transformation is illustrated in
Figure \ref{fig:ChaumontXform}, in which increments belonging to $I_1$
and $I_2$ are shown as solid lines, whereas those belonging to $I_3$
and $I_4$ are shown as dotted.

\begin{figure}[htb]\centering
 \input{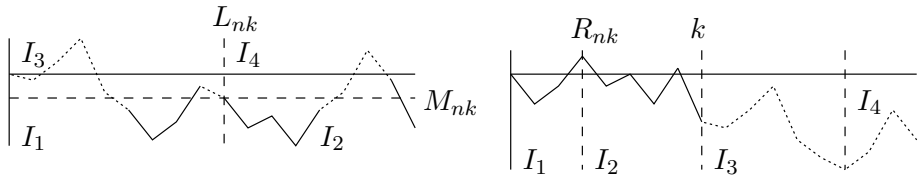}
 \caption{On the left a process $S$ and on the right its Chaumont
   transform.}
 \label{fig:ChaumontXform}
\end{figure}

If $S$ has exchangeable random increments then $S$ and $\tilde S$ have
the same distribution; as with the BCY transform, this presents a
marked difference from the quantile transform. Chaumont demonstrates
that if we substitute $\tilde S$ for $S$ on the right-hand side of
equation \eqref{eq:WendelPort} then we get identical equality, rather
than identity in law.

Theorem \ref{thm:WendelPort} admits various continuous-time
versions. Before stating some of these, we state a loose
continuous-time analogue to the quantile permutation, due to
Chaumont\cite{Chaumont00}.

\begin{Def}
 For $(X(t),\ t\in [0,1])$ a continuous, real-valued stochastic
 process with continuous local time, as in equation
 \eqref{eq:LT_defn}, we define
 \begin{align}
  m^X_s := \inf\left\{t\in [0,1]\ :\ X(t) =
  A(s)\rm{\ and\ }\frac{\ell_t(A(s))}{\ell_1(A(s))} > U\right\}\;
  \rm{for\ }s\in [0,1],
 \end{align}
 where $U$ is an independent \emph{Uniform}$[0,1]$ random variable.
\end{Def}

The analogy between $m_s$ and the quantile permutation is flawed
because $m_s$ requires additional randomization in its definition. But
there can be no bijection from $[0,1]$ to itself which has all of the
properties we would want in a quantile permutation; so we must settle
for $m_s$.

\begin{Thm}
 Let $(X(t),\ t\in [0,1])$ be a L\'evy process, and let $A$ be the
 quantile function of its occupation measure, as in equation
 \eqref{eq:QF_OM_defn}. Fix $T\in [0,1]$ and define
 $$X'(t) := X(t+T) - X(T)\rm{\ for\ }t\in [0,1-T].$$
 Then
 \begin{align}\label{eq:Dassios}
  (X(1),\ A(T)) &\stackrel{d}{=} (X(T) + X'(1-T),\ \sup_{t\in [0,T]}X(t) + \inf_{t\in [0,1-T]}X'(t))
 \end{align}
 (Dassios, 1996\cite{Dassios95,Dassios96}). If $X$ is Brownian bridge
 plus drift, then
 \begin{align}\label{eq:Chaumont}
  \left(\begin{array}{c}
   X(1)\\
   A(T)\\
   m_T
  \end{array}\right) \stackrel{d}{=}
  \left(\begin{array}{c}
   X(T) + X'(1-T)\\
   \sup_{t\in [0,T]}X(t) + \inf_{t\in [0,1-T]}X'(t)\\
   m^{X}_T + m^{X'}_0
  \end{array}\right).
 \end{align}
 (Chaumont, 2000\cite{Chaumont00}).
\end{Thm}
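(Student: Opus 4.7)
The Dassios identity has a trivially matching first coordinate since $X(T)+X'(1-T)=X(1)$ holds identically, so the real content is that conditional on $X(1)$, the level $A(T)$ has the same law as $\sup_{t\in[0,T]}X(t)+\inf_{t\in[0,1-T]}X'(t)$. My plan is to deduce this from the discrete Wendel--Port theorem (Theorem \ref{thm:WendelPort}) by approximation. In the discrete setting, with $S$ a random walk with exchangeable increments, $M^S_{nk}$ is the $k$-th order statistic of $(S(0),\ldots,S(n))$, so $M^S_{kk}=\max_{[0,k]}S$ and $M^{S'}_{n-k,0}=\min_{[0,n-k]}S'$. The identity in the first two coordinates of \eqref{eq:WendelPort} then says exactly the discrete analogue of Dassios's identity for $(S(n),M^S_{nk})$.

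To pass to the continuous Lévy limit, I would approximate $X$ by a triangular array of random walks $S_n$ with increments having the appropriate infinitely divisible law at scale $1/n$, in the spirit of the embedding used in Section \ref{sec:Jeulin} (for the Brownian case, by Knight's skeleton walks). The key convergence inputs are: the sorted values $M^{S_n}_{n,\lfloor nT\rfloor}$ converge to $A(T)$ (an analogue of Lemma \ref{lem:quantile_cnvgc} for $A$, together with joint convergence of $F_n$ to $F$); the discrete running maxima and minima converge to the continuous $\sup$ and $\inf$ by the continuous mapping theorem applied on Skorokhod space; and joint convergence of $(S_n, S_n')$ follows from independent-increments-type arguments applied before and after time $T$. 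Combining these with Slutsky gives the Dassios identity.

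For the Chaumont extension in the Brownian case, the third coordinate $L^S_{nk}$ is the time at which the $k$-th order statistic is achieved, and \eqref{eq:WendelPort} already yields $L^S_{nk}\stackrel{d}{=}L^S_{kk}+L^{S'}_{n-k,0}$, i.e.\ the discrete analogue of the desired decomposition. In the Brownian (bridge + drift) case, the level $A(T)$ is almost surely attained on a set of positive local time, and the definition of $m_s$ selects a time within that level set using an independent $\mathrm{Uniform}[0,1]$. I would show that $m^{X}_T$ may be re-expressed as the appropriate $\mathrm{Uniform}[0,1]$-randomized choice among the visits of $X|_{[0,T]}$ to $\max_{[0,T]}X$, and similarly $m^{X'}_0$ among the visits of $X'|_{[0,1-T]}$ to $\inf X'$. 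Since these two level sets together make up exactly the level set of $A(T)$ split at time $T$, a conditional uniform-choice argument identifies $m_T$ with $m^{X}_T+m^{X'}_0$ in joint law with the other three coordinates.

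The main obstacle is precisely this last step: joint weak convergence of the time coordinate $m_T$. Unlike the max/min, which are continuous functionals on Skorokhod space, the ``argmax'' and ``argmin'' are continuous only on processes with unique extrema; moreover $m_T$ requires an auxiliary randomization tied to local time. Controlling this requires either (a) checking a.s.\ uniqueness along the approximating sequence together with continuous-mapping arguments, or (b) working directly with the local time description and passing $\bar L_n$ to $\ell$ as in Knight's theorem \eqref{eq:Knight_LT}. It is presumably for this reason that the Chaumont identity is restricted to Brownian bridge with drift, where continuity of paths and local times provides enough structure to carry out the limit.
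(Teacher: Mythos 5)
The paper does not prove this theorem: it appears in the ``Further connections'' section as a quoted background result, attributed to Dassios and Chaumont, and the text immediately after the statement points the reader to path-transformation proofs of \eqref{eq:Dassios} in \cite{EmbrRogeYor95} and \cite{BertChauYor97} and of \eqref{eq:Chaumont} in \cite{Chaumont00}. So there is no in-paper argument to compare yours against; your random-walk approximation is genuinely different from the path-transformation route the paper cites, and is instead close in spirit to Dassios's original derivation \cite{Dassios95,Dassios96}, which did proceed from a discrete Wendel--Port-type identity. Your reduction to the discrete statement is correct: with the paper's indexing, $M^S_{kk}=\max_{0\leq j\leq k}S(j)$ and $M^{S'}_{n-k,0}=\min_{0\leq j\leq n-k}S'(j)$, so the first two coordinates of \eqref{eq:WendelPort} are exactly the discrete form of \eqref{eq:Dassios}.

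As a proof, however, the proposal has two genuine gaps. First, for the L\'evy case of \eqref{eq:Dassios}, the convergence of the order statistic $M^{S_n}_{n,\lfloor nT\rfloor}$ to $A(T)$ cannot be obtained as ``an analogue of Lemma \ref{lem:quantile_cnvgc}'': that lemma relies on the continuity of $B$ and on $F$ being strictly increasing between its escape from $0$ and its arrival at $1$, neither of which holds for a general L\'evy process, whose occupation measure can assign zero mass to whole intervals of levels, making $F$ flat and $A$ discontinuous. You would need to show that for fixed $T$ the quantile function $A$ is a.s.\ continuous at $T$, or otherwise control the exceptional set; this is where the real analytic work of the L\'evy case lives, and it is precisely the difficulty that the path-transformation proofs of \cite{BertChauYor97} are designed to avoid. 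Second, and more seriously, for \eqref{eq:Chaumont} you explicitly leave open the joint convergence of the time coordinate $m_T$, which is the entire content of Chaumont's extension beyond Wendel--Port. Identifying the limit of $L^{S_n}_{n,\lfloor nT\rfloor}$ (together with the rank $R^{S_n}_{n,\lfloor nT\rfloor}$) with the $U$-randomized time $m_T$ requires a precise correspondence between the discrete visit counts at the quantile level and the normalized local time $\ell_t(A(T))/\ell_1(A(T))$, which you gesture at but do not supply. Until that step is carried out, the third coordinate of \eqref{eq:Chaumont} remains unproved, so the proposal as written establishes neither identity in full.
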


Various path transformation-based proofs of \eqref{eq:Dassios} were
obtained by Embrechts, Rogers, and Yor\cite{EmbrRogeYor95} in the
Brownian case and by Bertoin et.\ al.\cite{BertChauYor97} in the
L\'evy case. Chaumont proved \eqref{eq:Chaumont} with a
continuous-time analogue to the Chaumont transform described
above. These results have applications to finance in the pricing of
Asian options. For a discussion of these applications see
Dassios\cite{Dassios95,Dassios96,Dassios05} and references therein.

Beyond connections in the literature around fluctuations of random
walks and Brownian motion, we also find links between the quantile
transform and discrete versions of Tanaka's formula. Such formulae
have previously been observed by Kudzma\cite{Kudzma82}, Cs\"org\"o and
Rev\'esz\cite{CsorReve85_1}, and Szabados\cite{Szabados90}. See also
\cite{SzabSzek09}. The quantile transformed path may be thought of as
interpolating between points specified by Tanaka's formula. This
connection is made in section \ref{sec:saw}.

\bibliographystyle{plain}
\bibliography{BijectionPf}

\end{document}